\newtheorem{thm}{Theorem}[section]
\newtheorem{lem}[thm]{Lemma}
\newtheorem{cor}[thm]{Corollary}
\newtheorem{prop}[thm]{Proposition}
\newtheorem{rem}[thm]{Remark}
\numberwithin{equation}{section}
\newcommand{\R}{\mathbb{R}}
\newcommand{\rd}{\mathrm{d}}
\newcommand{\dhr}{\mathrel{\lhook\joinrel\relbar\kern-.8ex\joinrel\lhook\joinrel\rightarrow}} 
\newcommand{\bu}{\mathbf{u}}
\newcommand{\bU}{\mathbf{U}}
\newcommand{\bv}{\mathbf{v}}
\newcommand{\bV}{\mathbf{V}}
\definecolor{pansypurple}{rgb}{0.47, 0.09, 0.29}
\definecolor{patriarch}{rgb}{0.5, 0.0, 0.5}
\definecolor{pumpkin}{rgb}{1.0, 0.46, 0.09}
\definecolor{purple(munsell)}{rgb}{0.62, 0.0, 0.77}
\definecolor{spirodiscoball}{rgb}{0.06, 0.75, 0.99}
\definecolor{lightgray}{rgb}{0.83, 0.83, 0.83}
\title{Dynamics of the Reversible Gray-Scott Model and Convergence to its Irreversible Limit}
\author{Philippe Lauren\c{c}ot}
\address{Laboratoire de Math\'ematiques (LAMA) UMR~5127, Universit\'e Savoie Mont Blanc, CNRS\\	F--73000 Chamb\'ery, France}
\email{philippe.laurencot@univ-smb.fr}
\author{Christoph Walker}
\address{Leibniz Universit\"at Hannover\\ Institut f\" ur Angewandte Mathematik \\ Welfengarten 1 \\ D--30167 Hannover\\ Germany}
\email{walker@ifam.uni-hannover.de}
\keywords{global well-posedness; long-term convergence; duality techniques; linearized stability; center manifold}
\subjclass{35B40 35B20 35A01 35K51 35K40}
\date{\today}
\begin{document}

\begin{abstract}
Well-posedness of a reversible variant of the Gray-Scott model is shown, along with the convergence of each trajectory to one of the two spatially homogeneous steady states. The principle of linearized stability provides the local attractivity at an exponential rate of the stable steady state, while the long-term limit is identified with the help of center manifold theory. Finally, convergence to the classical Gray-Scott model is proved for an appropriate choice of parameters.
\end{abstract}

\maketitle

\pagestyle{myheadings}
\markboth{\sc{Ph. Lauren\c cot \& Ch. Walker}}{\sc{Reversible Gray-Scott model}}

\section{Introduction}\label{S0}

The Gray-Scott model
\begin{align*}
\partial_t u_1&= d_1\Delta u_1-u_1u_2^2-k_1u_1+a\,, \\
\partial_t u_2&= d_2\Delta u_2+u_1u_2^2 - u_2\,, 
\end{align*}
is a semilinear system of reaction-diffusion equations describing the (irreversible) chemical reactions 
\begin{equation*}
U_1+2U_2 \to 3U_2\,, \quad U_1\to\mathrm{inert}\,, \quad U_2\to\mathrm{inert}\,,
\end{equation*}
with $u_i$ denoting the concentration of the species $U_i$, $i=1,2$. It is by now well known that a wide variety of spatial structures, including spots, stripes and fronts, may be generated by its dynamics depending on the range of the parameters $(d_1,d_2,k_1,a)\in (0,\infty)^4$, see \cite{Cas2017,ChWa2011,DKZ1997,GZK2018,HPT2000,KWW2006,MGR2004,MoKa2004,MuOs2001,NiUe2001,Pea1993,PeWa2009} and the references therein. 

In \cite{LJLWZ2022}, a reversible variant of the Gray-Scott model is derived, featuring four active species involved in the following reversible chemical reactions
\begin{equation*}
    U_1 + 2U_2 \leftrightarrows 3U_2\,, \quad U_1 \leftrightarrows U_4\,, \quad U_2 \leftrightarrows U_3\,,
\end{equation*}
which, after scaling, takes the form
\begin{subequations}\label{rGS}
\begin{align}
\partial_t u_1&= d_1\Delta u_1-u_1u_2^2+k_2u_2^3-k_1u_1+k_4u_4\,, \\
\partial_t u_2&= d_2\Delta u_2+u_1u_2^2-k_2u_2^3-u_2+k_3u_3\,, \\
\partial_t u_3&= d_3\Delta u_3+u_2-k_3u_3\,, \\
\partial_t u_4&= d_4\Delta u_4+k_1u_1-k_4u_4\,.
\end{align}
\end{subequations}
As pointed out in \cite{LJLWZ2022}, the built-in reversibility drastically alters the dynamics: indeed, the system~\eqref{rGS} has an energy (entropy) structure in the sense that
\begin{align*}
    \mathcal{E}_1(u_1,u_2,u_3,u_4) & = \int \left[ u_1 \big(\ln{u_1}-1\big) + u_2 \big(\ln{(k_2 u_2)}-1\big) \right]\,\mathrm{d}x \\
    & \quad + \int \left[ u_3 \big(\ln{(k_2 k_3 u_3)}-1\big) + u_4 \left( \ln{\left( \frac{k_4 u_4}{k_1} \right)} - 1\right) \right]\,\mathrm{d}x
\end{align*}
is a Liapunov functional; that is, it decreases along any trajectory as time increases. As a consequence, no pattern formation arises in the long-term as there are only two stationary solutions, which are both spatially homogeneous. However, the formal computations performed in \cite{HLWY2025,LJLWZ2022} reveal that the classical Gray-Scott model may be recovered as a limit of the reversible Gray-Scott model for a suitable choice of parameters $(k_2,k_3,k_4,d_4)$ and initial value $u_4(0)$. As a consequence, pattern formation is shifted to the transient behavior of solutions to the reversible Gray-Scott model in this particular regime of parameters, as reported in \cite{HLWY2025} on the basis of numerical simulations. Besides, local well-posedness in $H^1$ and global well-posedness for suitably small initial values are established in \cite{LJLWZ2022}, while the local stability of the spatially homogeneous stationary solutions is studied in \cite{HLWY2025}. 

The aim of this paper is to complete and improve the analysis performed in \cite{HLWY2025,LJLWZ2022} in two directions: on the one hand, we prove that the reversible Gray-Scott model is globally well-posed in $L_p$, $p>n$, in a bounded domain of $\mathbb{R}^n$ with homogeneous Neumann boundary conditions, without any smallness condition on the initial data. A key observation here is that the above-mentioned functional $\mathcal{E}_1$ is not the only Liapunov functional available for the reversible Gray-Scott model. Indeed, given any nonnegative convex function $\phi \in W_1^{\infty}((0,\infty))$, one can construct a corresponding Liapunov functional $\mathcal{E}_\phi$, see \Cref{P2}. A consequence of this observation is the convergence of each trajectory to one of the two (spatially homogeneous) stationary steady states. The local stability of these steady states is also studied. On the other hand, we provide a complete proof of the convergence of solutions of the reversible Gray-Scott model to those of the classical Gray-Scott model when $k_2=k_3=k_4=d_4=\varepsilon$ and $u_4(0)=a/\varepsilon$. In contrast to \cite[Theorem~1.1]{LJLWZ2022}, which is only valid on a finite time interval where energy bounds are available, our convergence result is true on any time interval $(0,T)$ and does not require $H^1$-estimates. Instead, it relies on duality techniques to derive $L_{2+\delta}$-estimates on $(u_1,u_2)$ for some $\delta>0$ \cite{CDF2014,DeTr2015}.

\section{Main Results}\label{S1}

Let $(d_1,d_2,d_3,d_4)\in (0,\infty)^4$  and $(k_1,k_2,k_3,k_4)\in (0,\infty)^4$. Consider the reversible Gray-Scott model (after scaling)
\begin{subequations}\label{U}
\begin{align}
\partial_t u_1&= d_1\Delta u_1-u_1u_2^2+k_2u_2^3-k_1u_1+k_4u_4\,, & (t,x)\in (0,\infty)\times\Omega\,,\label{u1}\\
\partial_t u_2&= d_2\Delta u_2+u_1u_2^2-k_2u_2^3-u_2+k_3u_3\,, & (t,x)\in (0,\infty)\times\Omega\,,\label{u2}\\
\partial_t u_3&= d_3\Delta u_3+u_2-k_3u_3\,, & (t,x)\in (0,\infty)\times\Omega\,,\label{u3}\\
\partial_t u_4&= d_4\Delta u_4+k_1u_1-k_4u_4\,, & (t,x)\in (0,\infty)\times\Omega\,,\label{u4}
\end{align}
with a bounded and smooth domain $\Omega\subset\R^n$ subject to homogeneous Neumann boundary conditions
\begin{equation}\label{Ubc)}
    \partial_\nu u_1=\partial_\nu u_2=\partial_\nu u_3=\partial_\nu u_4=0\,,\quad  (t,x)\in (0,\infty)\times\partial\Omega\,,
\end{equation}
and initial conditions
\begin{equation}\label{Uic}
   (u_1,u_2,u_3,u_4)(0)=\big(u_1^0,u_2^0,u_3^0,u_4^0\big)\,,\quad  x\in \Omega\,.
\end{equation}
\end{subequations}

For $p\in [1,\infty]$ we denote the positive cone of $L_p(\Omega)$ by $L_p^+(\Omega)$ and simply write $L_p(\Omega)$ for $L_p(\Omega,\R^4)$ and $L_p^+(\Omega)$ for $\big[L_p^+(\Omega)]^4$. For $\varrho\in\mathbb{R}$, we set 
\begin{equation}
\begin{split}
	\mathcal{Z}_{p,\varrho} & := \left\{ \bu = (u_i)_{1\le i \le 4}\in L_p(\Omega)\ :\ \int_\Omega \sum_{i=1}^4 u_i(x)\,\rd x = \varrho\right\}\,, \\
	\mathcal{Z}_{p,\varrho}^+ & := \mathcal{Z}_{p,\varrho} \cap L_p^+(\Omega)\,.
\end{split} \label{fz}
\end{equation}

The global well-posedness of~\eqref{U} may be formulated as follows.

\begin{thm}\label{T1}
Let $p\in (n,\infty)$ and $\varrho>0$. Given $\bu^0\in \mathcal{Z}_{p,\varrho}$ there exists a unique global strong solution $\bu=\bu(\cdot;\bu^0)$ to~\eqref{U} in $\mathcal{Z}_{p,\varrho}$; that is,
\begin{equation}
\begin{split}
	\bu & \in C\big([0,\infty),\mathcal{Z}_{p,\varrho}\big)\,, \\
	\bu & \in C^1\big((0,\infty),L_p(\Omega)\big)\cap  C\big((0,\infty), W_{p}^2(\Omega)\big)\,.
\end{split} \label{reg}
\end{equation}
Moreover, for any $t_0>0$, 
\begin{equation}
	\sup_{t\ge t_0} \|\bu(t)\|_{W_p^1} < \infty\ , \label{unifb}
\end{equation}
and, if $\bu^0\in \mathcal{Z}_{p,\varrho}^+$, then $\bu(t)\in \mathcal{Z}_{p,\varrho}^+$ for all $t\ge 0$. 

In fact, the mapping $(t,\bu^0)\mapsto \bu(t;\bu^0)$ defines a global semiflow on $\mathcal{Z}_{p,\varrho}$ and each orbit $\{\bu(t;\bu^0)\ :\ t\ge 0\}$ is relatively compact in $\mathcal{Z}_{p,\varrho}$.
\end{thm}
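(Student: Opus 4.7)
The plan is to combine standard analytic semigroup theory on $L_p(\Om)$ for the reaction-diffusion system \eqref{U} with the family of Liapunov functionals $\mathcal{E}_\phi$ supplied by \Cref{P2} in order to rule out finite-time blow-up of the local strong solution.

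First I would cast \eqref{U} as an abstract semilinear Cauchy problem $\partial_t\bu+A\bu=F(\bu)$ on $L_p(\Om)^4$, where $A=\mathrm{diag}(-d_1\Delta_N,\ldots,-d_4\Delta_N)$ and $-\Delta_N$ denotes the negative Neumann Laplacian, which generates an analytic semigroup on $L_p(\Om)$. Because $p>n$, the embedding $W_p^1(\Om)\hookrightarrow L_\infty(\Om)$ makes the polynomial reaction $F$ locally Lipschitz from $W_p^1(\Om)^4$ into $L_p(\Om)^4$; Amann's abstract framework for semilinear parabolic systems then delivers a unique maximal strong solution $\bu$ with the regularity \eqref{reg} on some interval $[0,T_{\max})$. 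The reaction vector is quasi-positive (its $i$-th component is nonnegative whenever $u_i=0$ and the remaining $u_j\ge 0$), so together with the Neumann boundary conditions, standard invariance criteria preserve the nonnegative cone $\mathcal{Z}_{p,\varrho}^+$. Summing \eqref{u1}--\eqref{u4} cancels all reaction contributions pointwise, and integrating over $\Om$ yields conservation of the total mass $\int_\Om\sum_{i=1}^4 u_i$; hence $\bu(t)\in\mathcal{Z}_{p,\varrho}$ on $[0,T_{\max})$.

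The heart of the argument, and the main obstacle, is to prove that $\|\bu(t)\|_{L_p}$ remains bounded on each bounded subinterval of $[0,T_{\max})$; the standard blow-up alternative will then force $T_{\max}=\infty$. The cubic reaction $u_1u_2^2-k_2u_2^3$ precludes any direct $L_p$-energy argument, and the crucial input is the \emph{entire family} $(\mathcal{E}_\phi)_\phi$ of Liapunov functionals supplied by \Cref{P2}, one for every nonnegative convex $\phi\in W_1^\infty((0,\infty))$. Specializing $\phi$ to suitably truncated approximations of $s\mapsto s^q$ and exploiting the associated entropy-dissipation inequalities should yield time-uniform bounds on $\bu$ in $L_q(\Om)^4$ for every finite $q$. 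Taking $q$ large enough places $F(\bu)$ in $L_\infty((0,T),L_p(\Om)^4)$, after which the smoothing estimate $\|e^{-tA}\|_{\mathcal{L}(L_p,W_p^1)}\lesssim t^{-1/2}$ plugged into the variation-of-constants formula produces the desired $L_p$-bound on $\bu(t)$. An alternative avenue, following the $L_{2+\delta}$-duality technique of Cañizo--Desvillettes--Fellner \cite{CDF2014} and Desvillettes--Trescases \cite{DeTr2015}, would exploit the control on $\int_\Om\sum_i u_i$ together with the sign cancellation in $\partial_t(u_1+u_2)$ to gain integrability on $(u_1,u_2)$; the linearity of \eqref{u3}--\eqref{u4} in $(u_3,u_4)$ then transfers integrability to the remaining components essentially for free.

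Once globality is secured, parabolic smoothing applied to the variation-of-constants formula with $L_p$-bounded nonlinearity gives \eqref{unifb} directly, while continuous dependence on initial data from the local theory promotes the solution map $(t,\bu^0)\mapsto\bu(t;\bu^0)$ to a global semiflow on $\mathcal{Z}_{p,\varrho}$. The compactness of the Rellich embedding $W_p^1(\Om)\hookrightarrow L_p(\Om)$ combined with \eqref{unifb} delivers the relative compactness of every forward orbit in $\mathcal{Z}_{p,\varrho}$. I expect the technical core of the argument to be the extraction of \emph{time-uniform} $L_q$-bounds from the entropy-dissipation inequalities of \Cref{P2}---rather than merely a finite-time integral control of the dissipation---which will require a careful choice of $\phi$ adapted to the cubic dissipative contribution $k_2u_2^3$ in \eqref{u2}.
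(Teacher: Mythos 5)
Your overall strategy coincides with the paper's: local theory via analytic semigroups, quasi-positivity for the invariance of the cone, mass conservation for the invariance of $\mathcal{Z}_{p,\varrho}$, and the family of Liapunov functionals of \Cref{P2} with power-type $\phi$ to defeat the cubic nonlinearity and rule out blow-up. Where you diverge is in the execution of the blow-up step, and there your route has a wrinkle. The paper sets up the local theory so that the blow-up criterion \eqref{bcrit} is in the $L_p$-norm itself, and then simply takes $\phi(r)=|r|^p$: since $\mathcal{E}_p(\bu^0)<\infty$ is exactly the hypothesis $\bu^0\in L_p(\Omega)$, the monotonicity $\mathcal{E}_p(\bu(t))\le\mathcal{E}_p(\bu^0)$ is already the required bound, with no need to control $F(\bu)$ in $L_p$ or to invoke smoothing. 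Your plan instead asks for $L_q$-bounds for \emph{every} finite $q$ starting from $t=0$; but for $q>p$ the hypothesis $\mathcal{E}_{\phi}(\bu^0)<\infty$ of \Cref{P2} fails in general since $\bu^0$ is only in $L_p$, and truncating $\phi$ does not rescue this because the resulting bound degenerates as the truncation is removed. The repair is to first secure the $L_p$-bound and then restart at a positive time $t_1$ where $\bu(t_1)\in W^2_{p,N}(\Omega)\hookrightarrow L_\infty(\Omega)$ — which is in fact how the paper obtains \eqref{unifb}, via the particularly clean choice $\Phi_{M(t_0)}(r)=(r-M(t_0))_+$, yielding a time-uniform $L_\infty$-bound on $[t_0,\infty)$ rather than merely $L_q$-bounds. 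Finally, for \eqref{unifb} your variation-of-constants step needs the integral $\int_0^t\|e^{(t-s)A}\|_{\mathcal{L}(L_p,W_p^1)}\,\rd s$ to stay bounded as $t\to\infty$; with your choice $A=\mathrm{diag}(-d_i\Delta_N)$ (no zeroth-order terms) the semigroup does not decay and you must either absorb the linear decay terms $-k_1,-1,-k_3,-k_4$ into $A$ as the paper does, or use a moving window $[t-1,t]$. These are fixable gaps of execution rather than of strategy.
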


Though only non-negative solutions to~\eqref{U} are physically relevant, the well-posedness for arbitrary initial conditions is needed later on for the analysis of the stability of the steady state lying on the boundary of $\mathcal{Z}_{p,\varrho}^+$ (see~\eqref{eqr} below).

We next turn to the long-term behavior of non-negative solutions to~\eqref{U} and fix $p\in (n,\infty)$ and $\varrho>0$. We first recall that, according to \cite[Section~1]{LJLWZ2022}, there are only two stationary solutions to~\eqref{U} in $\mathcal{Z}_{p,\varrho}^+$, denoted by $\mathbf{E}_{b,\varrho}$ and $\mathbf{E}_{\circ,\varrho}$, which are both spatially homogeneous and given by
\begin{equation}
\begin{split}
	\mathbf{E}_{\circ,\varrho} := \frac{\varrho}{|\Omega|} \mathbf{E}_{\circ}\,,  & \qquad  \mathbf{E}_{\circ} := \frac{1}{K_0} \big(k_2k_3k_4\,, k_3k_4\,, k_4\,, k_1k_2k_3\big)\,, \\
	\mathbf{E}_{b,\varrho} := \frac{\varrho}{|\Omega|} \mathbf{E}_{b}\,,  & \qquad \mathbf{E}_{b} := \frac{1}{k_1+k_4} \big( k_4\,, 0\,, 0\,, k_1 \big)\,,
\end{split} \label{eqr}
\end{equation}  
with
\begin{equation*}
	K_0 := k_1k_2k_3 + k_2k_3k_4 + k_3k_4 + k_4\,.
\end{equation*}
The set of equilibria in the invariant subset $\mathcal{Z}_{p,\varrho}^+$ of $L_p^+(\Omega)$ is thus discrete, a property which we combine with the availability of Liapunov functionals, see \Cref{S2}, and the semiflow and compactness properties provided in \Cref{T1} to identify the long-term behavior of solutions to~\eqref{U}.

\begin{thm}\label{T2}
Let $p\in (n,\infty)$, $p\ge 3$, and $\varrho>0$. Given $\bu^0\in \mathcal{Z}_{p,\varrho}^+$, there is $*\in\{b,\circ\}$ (depending on $\bu^0$) such that
\begin{equation}
	\lim_{t\to\infty} \|\bu(t) - \mathbf{E}_{*,\varrho} \|_p=0\,. \label{ltb}
\end{equation}	
In fact,
\begin{itemize}[label = $\triangleright$]
    \item $\mathbf{E}_{*,\varrho}=\mathbf{E}_{b,\varrho}$ in~\eqref{ltb} if and only if $u_2^0=u_3^0=0$;
    \item $\mathbf{E}_{\circ,\varrho}$ is locally exponentially stable in $\mathcal{Z}_{p,\varrho}^+$.
\end{itemize}
\end{thm}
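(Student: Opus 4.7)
The plan is to carry out a LaSalle-type invariance argument based on the Liapunov functional $\mathcal{E}_\phi$ from \Cref{P2} and the compactness of orbits from \Cref{T1}, then distinguish the two possible asymptotic equilibria by means of the strong maximum principle applied to the $(u_2,u_3)$-subsystem, and finally invoke the principle of linearized stability at $\mathbf{E}_{\circ,\varrho}$ to obtain its local exponential attractivity.

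Given $\bu^0 \in \mathcal{Z}_{p,\varrho}^+$, \Cref{T1} ensures that the orbit $\{\bu(t;\bu^0): t\ge 0\}$ is relatively compact in $\mathcal{Z}_{p,\varrho}$, so $\omega(\bu^0)$ is non-empty, compact, connected and positively invariant. Choosing a strictly convex $\phi\in W_1^\infty((0,\infty))$ in \Cref{P2}, the monotonicity of $\mathcal{E}_\phi$ forces it to be constant on $\omega(\bu^0)$. The associated entropy-dissipation identity is a sum of three nonnegative contributions, one per reaction in~\eqref{U}, whose common zero set is precisely the set of stationary solutions of~\eqref{U} in $\mathcal{Z}_{p,\varrho}^+$. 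Thus $\omega(\bu^0)\subset\{\mathbf{E}_{b,\varrho},\mathbf{E}_{\circ,\varrho}\}$, and connectedness of $\omega(\bu^0)$ together with discreteness of the right-hand side forces $\omega(\bu^0)=\{\mathbf{E}_{\ast,\varrho}\}$ for a single $\ast\in\{b,\circ\}$, proving~\eqref{ltb}.

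To identify $\ast$: if $u_2^0=u_3^0=0$, uniqueness in \Cref{T1} propagates these to $u_2(t)\equiv u_3(t)\equiv 0$, so the system reduces to a linear symmetric reaction-diffusion pair for $(u_1,u_4)$ whose only equilibrium on the mass surface is $\mathbf{E}_{b,\varrho}$; standard $L_p$-semigroup theory then gives exponential convergence. Conversely, assume $u_2^0+u_3^0\not\equiv 0$. Applying the strong maximum principle successively to~\eqref{u3} and then to~\eqref{u2}—viewed as a linear parabolic equation with bounded coefficients and nonnegative source $k_3u_3$—yields $u_2(t,x),u_3(t,x)>0$ on $(0,\infty)\times\bar\Omega$. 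Supposing for contradiction that $\bu(t)\to\mathbf{E}_{b,\varrho}$, the bound~\eqref{unifb} and the embedding $W_p^1(\Omega)\hookrightarrow C(\bar\Omega)$ (valid because $p>n$) upgrade this convergence to uniform convergence, so that, using positivity of the first component of $\mathbf{E}_{b,\varrho}$, one has $u_1(t,x)-k_2u_2(t,x)\ge c>0$ for all $x$ and all $t$ large. Adding~\eqref{u2} and~\eqref{u3} and integrating over $\Omega$ then gives
\begin{equation*}
\frac{\mathrm{d}}{\mathrm{d}t}\int_\Omega (u_2+u_3)\,\mathrm{d}x \;=\; \int_\Omega u_2^2(u_1-k_2u_2)\,\mathrm{d}x \;\ge\; 0
\end{equation*}
for $t$ large, which is incompatible with $\int_\Omega(u_2+u_3)\,\mathrm{d}x\to 0$ together with the pointwise positivity just established. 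Hence $\ast=\circ$ in this case.

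Finally, the local exponential stability of $\mathbf{E}_{\circ,\varrho}$ follows from the principle of linearized stability in $L_p(\Omega)$. Writing $\bu=\mathbf{E}_{\circ,\varrho}+\bv$, the linearization reads $\partial_t\bv=(D\Delta+J)\bv$ with $D=\mathrm{diag}(d_1,\dots,d_4)$ and $J$ the Jacobian of the reaction at $\mathbf{E}_{\circ,\varrho}$. Mass conservation produces a simple zero eigenvalue in the direction $\mathbf{E}_\circ$, but the restriction of $D\Delta+J$ to the invariant hyperplane $\{\bv\in L_p(\Omega):\int_\Omega\sum_iv_i\,\mathrm{d}x=0\}$ tangent to $\mathcal{Z}_{p,\varrho}$ has spectrum strictly in the open left half-plane. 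This is verified by Fourier decomposition on the Neumann eigenbasis $(\mu_k)_{k\ge 0}$, reducing the problem to the spectra of the $4\times 4$ matrices $-\mu_k D+J$: the case $k=0$ is controlled by the positive definiteness of the Hessian of $\mathcal{E}_1$ at $\mathbf{E}_{\circ,\varrho}$ on the mass-zero hyperplane, which follows from the detailed-balance character of $\mathbf{E}_{\circ,\varrho}$, while for $k\ge 1$ the additional contribution $-\mu_k D$ only improves stability. The standard linearized-stability theorem for analytic semigroups then yields the claim. The main obstacle I anticipate is precisely this spectral step, where the neutral direction from mass conservation must be cleanly separated from the dissipative modes and uniform bounds across Neumann modes must be obtained; by comparison the maximum-principle argument ruling out $\mathbf{E}_{b,\varrho}$ is conceptually straightforward once the regularity~\eqref{reg} is exploited.
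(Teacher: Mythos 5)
Your proposal is correct, and its first and third steps follow essentially the paper's route: the LaSalle argument with the Liapunov functionals of \Cref{P2} is exactly \Cref{L1} and \Cref{C1a} (the paper takes $\phi(r)=r^2$), and the exponential stability of $\mathbf{E}_{\circ,\varrho}$ is \Cref{P3} and \Cref{C2}. One caution on your spectral step: the bare assertion that for $k\ge 1$ the contribution $-\mu_k D$ ``only improves stability'' is false in general for a stable Jacobian $J$ — this is precisely the Turing mechanism. It is true here only because $J$ is symmetric and negative semidefinite with respect to the diagonal weight $\mathrm{diag}(1,k_2,k_2k_3,k_4/k_1)$ coming from detailed balance, and $D$ is also diagonal, so diffusion and reaction are simultaneously symmetrized; this is exactly the content of the paper's \Cref{L2}, which performs the computation at the operator level and, via compactness of the resolvent, also supplies the uniform spectral gap you flag as the main obstacle.

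Where you genuinely depart from the paper is in ruling out convergence to $\mathbf{E}_{b,\varrho}$ when $u_2^0+u_3^0\not\equiv 0$. The paper handles the non-hyperbolic equilibrium by constructing a one-dimensional center manifold (\Cref{P5}), reducing the dynamics to $s'\approx -K_4 s^2$, and excluding the positive cone by a sign argument on $s$. Your argument bypasses all of this: \eqref{unifb} and $W_p^1(\Omega)\hookrightarrow C(\bar\Omega)$ upgrade \eqref{ltb} to uniform convergence, so $u_1-k_2u_2\ge c>0$ for large $t$, whence
\begin{equation*}
\frac{\rd}{\rd t}\int_\Omega(u_2+u_3)\,\rd x=\int_\Omega u_2^2\,(u_1-k_2u_2)\,\rd x\ge 0\,,
\end{equation*}
while the strong maximum principle makes $\int_\Omega(u_2+u_3)\,\rd x$ positive at any positive time; this monotonicity contradicts $\int_\Omega(u_2+u_3)\,\rd x\to 0$. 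This is correct, appreciably more elementary, and does not even use $p\ge 3$, which the paper needs only for the cubic H\"older estimate in the center manifold construction. What the heavier machinery buys is the extra information of \Cref{C3}: $\mathbf{E}_{b,\varrho}$ does attract a one-sided neighborhood on the center manifold at an algebraic rate, but the corresponding initial data have sign-changing components — a semi-stability picture that your argument does not detect.
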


The proof of \Cref{T2} relies on the study of the linearization of~\eqref{U} at $\mathbf{E}_{*,\varrho}$ with $*\in\{\circ,b\}$: for $\mathbf{E}_{\circ,\varrho}$, we prove that the spectrum of the linearized operator in $\mathcal{Z}_{p,0}$ is contained in $(-\infty,-\lambda_{\circ,\varrho}]$ for some $\lambda_{\circ,\varrho}>0$ and the asymptotic exponential stability of $E_{\circ,\varrho}$ is then derived with the help of the principle of linearized stability. This efficient tool is no longer available for $\mathbf{E}_{b,\varrho}$ as the spectrum of the linearized operator in $\mathcal{Z}_{p,0}$ contains the zero eigenvalue. Thus, $\mathbf{E}_{b,\varrho}$ is only semi-stable and it is unclear at first glance whether it plays a role in the long-term dynamics. To shed some light on this issue, we appeal to center manifold theory \cite{Carr81} and construct a one-dimensional center manifold on which the dynamics is governed by an ordinary differential equation of the form $X'=-A X^2$ for some $A>0$, from which a one-side stability (for $X(0)>0$) of $\mathbf{E}_{b,\varrho}$ follows, which however excludes non-negative initial values  with non-zero second and third components. Both results complete and extend the analysis performed in \cite[Sections~3.1-3.2]{HLWY2025}, which is based on minimizing properties of the steady states and numerical simulations.

\begin{rem}\label{R1}
If $\bu^0=(u_i^0)_{1\le i\le 4}\in \mathcal{Z}_{p,\varrho}^+$ is such that $u_2^0=u_3^0=0$, then the solution $\bu=(u_i)_{1\le i \le 4}$ to~\eqref{U} satisfies 
$u_2(t)=u_3(t)=0$ for all $t\ge 0$, while $(u_1,u_4)$ solves the reduced linear system
\begin{subequations}\label{rU}
\begin{align}
\partial_t u_1&= d_1\Delta u_1-k_1u_1+k_4u_4\,, & (t,x)\in (0,\infty)\times\Omega\,,\label{ru1}\\
\partial_t u_4&= d_4\Delta u_4+k_1u_1-k_4u_4\,, & (t,x)\in (0,\infty)\times\Omega\,,\label{ru4}
\end{align}
\end{subequations}
subject to homogeneous Neumann boundary conditions and initial conditions $(u_1^0,u_4^0)$. The convergence of $\bu(t)$ to $\mathbf{E}_{b,\varrho}$ is obvious in that case, as $k_1\|u_1-E_{b,\varrho,1}\|_2^2+k_4\|u_4-E_{b,\varrho,4}\|^2$ is a Liapunov functional for~\eqref{rU}.
\end{rem}

The last contribution of this paper is a proof of the convergence of solutions of the reversible Gray-Scott model~\eqref{U} to that of the classical Gray-Scott model when the reaction rates and diffusion coefficients are appropriately chosen, as outlined in \cite[Section~1.1]{LJLWZ2022}. Let us first recall that the classical Gray-Scott model reads
\begin{subequations}\label{GS}
\begin{align}
\partial_t u_1&= d_1\Delta u_1-u_1u_2^2-k_1u_1+a\,, & (t,x)\in (0,\infty)\times\Omega\,,\label{gs1}\\
\partial_t u_2&= d_2\Delta u_2+u_1u_2^2 - u_2\,, & (t,x)\in (0,\infty)\times\Omega\,,\label{gs2}
\end{align}
subject to homogeneous Neumann boundary conditions
\begin{equation}\label{GSbc)}
    \partial_\nu u_1=\partial_\nu u_2=0\,,\quad  (t,x)\in (0,\infty)\times\partial\Omega\,,
\end{equation}
and initial conditions
\begin{equation}\label{GSic}
   (u_1,u_2)(0)=\big(u_1^0,u_2^0\big)\,,\quad  x\in \Omega\,,
\end{equation}
\end{subequations}
where $a$ is a non-negative source term (usually taken to be a positive constant in the literature). The global well-posedness in a classical sense is established in \cite{HMP1987}.

\begin{thm}\label{T3}
Consider $\varepsilon\in (0,1)$ and assume that
\begin{equation}
    k_2=k_3=k_4=d_4=\varepsilon \,. \label{il01}
\end{equation}
Given $(u_1^0,u_2^0,u_3^0,a)\in W_{p_0}^{1,+}(\Omega)$ for some $p_0\in (n,\infty)$, $p_0\ge 2$, we set $\bu_\varepsilon^0= (u_1^0,u_2^0,u_3^0,a/\varepsilon)$ and let $\bu_\varepsilon=\bu(\cdot;\bu_\varepsilon^0)$ be the corresponding solution to~\eqref{U}. Then there is $p\in (1,2)$ such that, for any $T>0$,
\begin{equation*}
       \lim_{\varepsilon\to 0} \sup_{t\in [0,T]} \|(u_{i,\varepsilon}-u_i)(t)\|_{p} = 0\,, \quad 1\le i \le 2\,, 
\end{equation*}
where $(u_1,u_2)$ is the classical solution to~\eqref{GS}. In addition,
\begin{equation*}
    \lim_{\varepsilon\to 0} \sup_{t\in [0,T]} \|(u_{3,\varepsilon}-u_3)(t)\|_{p} = \lim_{\varepsilon\to 0} \sup_{t\in [0,T]} \|\varepsilon u_{4,\varepsilon}(t) - a\|_{2} = 0\,,
\end{equation*}
where $u_3$ is the unique classical solution to
\begin{subequations}\label{heq3}
\begin{align}
    \partial_t u_3 & = d_3 \Delta u_3 + u_2 \,, & (t,x)\in (0,\infty)\times\Omega\,, \label{heq3e}\\
    \partial_\nu u_3 & = 0\,, &  (t,x)\in (0,\infty)\times\partial\Omega\,,\label{heq3bc}\\
    u_3(0) & = u_3^0\,, & x\in\Omega\,. \label{heq3ic}
\end{align}
\end{subequations}
\end{thm}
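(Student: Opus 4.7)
The plan is to derive $\varepsilon$-uniform a priori estimates strong enough for a compactness argument, and then pass to the limit and identify the limiting system. A natural first move is to set $v_\varepsilon := \varepsilon u_{4,\varepsilon}$, which turns \eqref{u4} into
\begin{equation*}
	\partial_t v_\varepsilon - \varepsilon \Delta v_\varepsilon + \varepsilon v_\varepsilon = \varepsilon k_1 u_{1,\varepsilon}\,, \qquad v_\varepsilon(0) = a\,.
\end{equation*}
Duhamel's formula with the Neumann heat semigroup $(e^{\varepsilon t \Delta})_{t\ge 0}$ and its $L_2$-contractivity yield
\begin{equation*}
	\|v_\varepsilon(t) - a\|_2 \le \|e^{\varepsilon t \Delta} a - a\|_2 + (1 - e^{-\varepsilon t})\|a\|_2 + \varepsilon k_1 \int_0^t \|u_{1,\varepsilon}(s)\|_2\, \rd s\,,
\end{equation*}
so that as soon as $\|u_{1,\varepsilon}\|_{L_\infty((0,T);L_2)}$ is under $\varepsilon$-uniform control, the convergence $v_\varepsilon \to a$ in $C([0,T]; L_2(\Omega))$ follows. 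Injecting $v_\varepsilon$ back into \eqref{u1} produces exactly the source $a$ of the Gray-Scott equation \eqref{gs1} in the limit, provided also that $\varepsilon u_{2,\varepsilon}^3$ and $\varepsilon u_{3,\varepsilon}$ vanish.

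The heart of the argument is thus an $\varepsilon$-uniform $L_{2+\delta}((0,T)\times\Omega)$ estimate on $(u_{1,\varepsilon}, u_{2,\varepsilon})$ for some $\delta > 0$. I would derive it via the Pierre-style duality technique of \cite{CDF2014,DeTr2015} applied to $w_\varepsilon := u_{1,\varepsilon} + u_{2,\varepsilon}$, which, thanks to the cancellation of the cubic terms, satisfies
\begin{equation*}
	\partial_t w_\varepsilon - \Delta(M_\varepsilon w_\varepsilon) = -k_1 u_{1,\varepsilon} - u_{2,\varepsilon} + \varepsilon u_{3,\varepsilon} + v_\varepsilon\,,
\end{equation*}
with $M_\varepsilon := (d_1 u_{1,\varepsilon} + d_2 u_{2,\varepsilon})/w_\varepsilon \in [\min\{d_1, d_2\}, \max\{d_1, d_2\}]$. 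Elementary mass balance on the four components (after the rescaling $u_4 \mapsto v_\varepsilon$) first yields $L_1$ control of $w_\varepsilon$, $u_{3,\varepsilon}$ and $v_\varepsilon$ uniformly in $\varepsilon$; this bound is then upgraded to $L_2$ through the heat equation $\partial_t u_{3,\varepsilon} - d_3 \Delta u_{3,\varepsilon} = u_{2,\varepsilon} - \varepsilon u_{3,\varepsilon}$ and the maximal-regularity inherent in the duality estimate, which brings the source $\varepsilon u_{3,\varepsilon} + v_\varepsilon$ under the $L_2$ control needed for the improved-duality result of \cite{CDF2014,DeTr2015} to deliver the desired $L_{2+\delta}$ bound.

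Once the $L_{2+\delta}$ bounds are in hand, the equations furnish corresponding bounds on $\partial_t u_{i,\varepsilon}$ in, say, $L_q((0,T); W_r^{-2}(\Omega))$, and the Aubin-Lions lemma yields a subsequence converging in $C([0,T]; L_p(\Omega))$ for some $p \in (1,2)$ together with a.e. convergence. Equi-integrability of the cubic nonlinearity $u_{1,\varepsilon} u_{2,\varepsilon}^2$, guaranteed by the $L_{2+\delta}$ bound, combined with Vitali's theorem, allows passage to the limit in this term. Together with $v_\varepsilon \to a$ and $\varepsilon u_{2,\varepsilon}^3, \varepsilon u_{3,\varepsilon} \to 0$, the limit $(u_1, u_2)$ satisfies \eqref{GS} in the weak sense and $u_3$ solves \eqref{heq3}; uniqueness of the classical solution of \eqref{GS} from \cite{HMP1987} then identifies the limit and upgrades the subsequence convergence to convergence of the whole family.

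The main obstacle is closing the duality bootstrap: the source $\varepsilon u_{3,\varepsilon} + v_\varepsilon$ in the equation for $w_\varepsilon$ itself depends on the solution, creating a feedback loop that must be controlled uniformly in $\varepsilon$ despite the singular initial datum $u_{4,\varepsilon}(0) = a/\varepsilon$. In addition, the cubic exponent in $u_1 u_2^2$ sits right at the edge of what $L_{2+\delta}$-duality affords, which is why \Cref{T3} can only assert convergence in $L_p$ for some $p \in (1,2)$ and why the passage to the limit in the nonlinearity has to proceed through equi-integrability rather than direct strong $L_3$ convergence.
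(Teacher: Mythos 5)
Your overall architecture (rescaling $\varepsilon u_{4,\varepsilon}$, mass and energy bounds, improved duality for higher integrability, Vitali's theorem, identification of the limit by uniqueness) is the same as the paper's, but two steps do not go through as written, and the first is fatal. You claim that the $L_{2+\delta}((0,T)\times\Omega)$ bound on $(u_{1,\varepsilon},u_{2,\varepsilon})$ guarantees equi-integrability of $u_{1,\varepsilon}u_{2,\varepsilon}^2$. By H\"older, that bound only places the cubic term in $L_{(2+\delta)/3}$, and since the improved duality of \cite{CDF2014,DeTr2015} only yields a small $\delta$, one has $(2+\delta)/3<1$: the nonlinearity is not even controlled in $L_1$, let alone uniformly integrable. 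The paper's way around this is a \emph{weighted} $L_2$ energy estimate (testing \eqref{v1e} with $v_{1,\varepsilon}$, \eqref{v2e} with $\varepsilon v_{2,\varepsilon}$, and so on), under which the cubic terms combine into the dissipation term $-\|v_{2,\varepsilon}(v_{1,\varepsilon}-\varepsilon v_{2,\varepsilon})\|_{L_2((0,T)\times\Omega)}^2$; one then factorizes $v_{1,\varepsilon}v_{2,\varepsilon}^2-\varepsilon v_{2,\varepsilon}^3=\big[v_{2,\varepsilon}(v_{1,\varepsilon}-\varepsilon v_{2,\varepsilon})\big]\,v_{2,\varepsilon}$ and pairs this $L_2$ factor with the $L_q$ bound ($q>2$) on $v_{2,\varepsilon}$ coming from duality, landing in $L_{2q/(q+2)}$ with exponent strictly above $1$. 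Your proposal contains no analogue of this dissipation estimate, and without it the passage to the limit in the reaction term collapses.

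The second issue is your choice of dualizing $w_\varepsilon=u_{1,\varepsilon}+u_{2,\varepsilon}$. Its source is $-k_1u_{1,\varepsilon}-u_{2,\varepsilon}+\varepsilon u_{3,\varepsilon}+v_\varepsilon$; the term $-u_{2,\varepsilon}$ can be discarded by a sign argument (for $f\ge 0$ the dual solution has a sign), but $+\varepsilon u_{3,\varepsilon}$ must be estimated in $L_{r/(r-1)}$ with $r/(r-1)>2$, which neither the $L_1$ mass bound nor the energy estimate provides (the latter only controls $\varepsilon\|u_{3,\varepsilon}\|_{L_\infty(0,T;L_2)}$); an absorption argument via the heat semigroup could possibly repair this, but you do not give one. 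The paper avoids the issue entirely by dualizing $\Sigma_\varepsilon=v_{1,\varepsilon}+v_{2,\varepsilon}+v_{3,\varepsilon}$, so that $-v_{2,\varepsilon}+\varepsilon v_{3,\varepsilon}$ cancels against $+v_{2,\varepsilon}-\varepsilon v_{3,\varepsilon}$ and only $-k_1v_{1,\varepsilon}+v_{4,\varepsilon}$ remains, both controllable via Gagliardo--Nirenberg, the gradient bound, and the semigroup. Finally, identifying the limit requires more than citing uniqueness from \cite{HMP1987}: the limit is a priori only a mild solution in a low $L_p$ class, and the paper needs the comparison principle (to get $u_1\in L_\infty$) together with a second, HMP-style duality argument (to get $u_2\in L_{q}$ for every finite $q$) before bootstrapping to the classical solution.
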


The outline of this paper is as follows. In the following Section~\ref{S2} we derive Liapunov functionals and establish Theorem~\ref{T1} on the global well-posedness of the reversible Gray-Scott model~\eqref{U}. In Section~\ref{S3} we address the long-term behavior of solutions as summarized in Theorem~\ref{T2}. Finally, in Section~\ref{S4} we prove the convergence of solutions to the reversible Gray-Scott model~\eqref{U} to solutions of the irreversible Gray-Scott model~\eqref{GS} stated in Theorem~\ref{T3}.

\section{Well-Posedness and Liapunov Functionals}\label{S2}

From now on, we denote the Laplace operator in $L_p(\Omega)$ with domain
\begin{equation*}
    W_{p,N}^2(\Omega) := \left\{ z\in W_p^2(\Omega)\ :\ \partial_\nu z = 0 \;\text{ on }\; \partial\Omega \right\}
\end{equation*}
by $\Delta_N$. We first establish the local well-posedness of \eqref{U}.

\begin{prop}\label{P1}
Let $p\in (n,\infty)$ and $\varrho>0$. Given $\bu^0\in \mathcal{Z}_{p,\varrho}$ there exists a unique maximal strong solution $\bu=\bu(\cdot;\bu^0)$ to~\eqref{U} in $\mathcal{Z}_{p,\varrho}$ defined on a maximal time interval $[0,t^+(\bu^0))$ with $t^+(\bu^0)\in (0,\infty]$; that is,
\begin{equation}
\begin{split}
	\bu & \in C\big([0,t^+(\bu^0)),\mathcal{Z}_{p,\varrho}\big)\,, \\
	\bu & \in C^1\big((0,t^+(\bu^0)),L_p(\Omega)\big)\cap  C\big((0,t^+(\bu^0)),W_{p}^2(\Omega)\big)\,.
\end{split}\label{regl}
\end{equation}
In addition, if $\bu^0\in \mathcal{Z}_{p,\varrho}^+$, then $\bu(t)\in\mathcal{Z}_{p,\varrho}^+$ for $t\in [0,t^+(\bu^0))$.

If $t^+(\bu^0)<\infty$, then
\begin{equation}\label{bcrit}
	\limsup_{t\nearrow t^+(\bu^0)}\|\bu(t)\|_{p}=\infty\,.
\end{equation}
In fact, the mapping $(t,\bu^0)\mapsto \bu(t;\bu^0)$ defines a semiflow on $\mathcal{Z}_{p,\varrho}$. 
\end{prop}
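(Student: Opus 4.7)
The plan is to cast \eqref{U} as the abstract semilinear Cauchy problem
\[
\dot\bu + A\bu = F(\bu)\,, \quad \bu(0) = \bu^0\,,
\]
on $L_p(\Omega,\R^4)$, where $A := -\mathrm{diag}(d_1\Delta_N, d_2\Delta_N, d_3\Delta_N, d_4\Delta_N)$ has domain $[W_{p,N}^2(\Omega)]^4$ and $F$ collects the reaction terms appearing on the right-hand side of~\eqref{U}. It is classical that $-A$ generates an analytic semigroup $\{e^{-tA}\}_{t\ge 0}$ on $L_p(\Omega,\R^4)$ which preserves non-negativity componentwise, each factor being a scalar Neumann heat semigroup. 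The key structural observation is that, since $p>n$, the Sobolev embedding $W_p^1(\Omega) \hookrightarrow L_\infty(\Omega)$ ensures that the cubic reaction map $F$ sends $[W_p^1(\Omega)]^4$ into $[L_p(\Omega)]^4$ with local Lipschitz continuity on bounded sets. Combined with the parabolic smoothing bound $\|e^{-tA}\|_{\mathcal{L}(L_p, W_p^1)} \lesssim t^{-1/2}$, a standard contraction argument in a weighted function space (for instance, a closed ball in $C([0,T], L_p)$ whose norm controls $t^{1/2}\|\bu(t)\|_{W_p^1}$) yields a unique maximal mild solution on an interval $[0, t^+(\bu^0))$. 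Analyticity lifts it to the strong regularity~\eqref{regl}, and the standard maximality argument provides the blow-up alternative~\eqref{bcrit}.

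Mass conservation is obtained by summing the four equations of~\eqref{U}: the cubic terms $\pm u_1u_2^2$ and $\mp k_2 u_2^3$ as well as the linear exchange terms $\pm k_1 u_1$, $\pm k_4 u_4$, $\pm u_2$, and $\pm k_3 u_3$ cancel pairwise, leaving $\partial_t(u_1+u_2+u_3+u_4) = \sum_{i=1}^4 d_i\Delta u_i$. Integrating over $\Omega$ and using the Neumann boundary conditions gives $\frac{d}{dt}\int_\Omega \sum_i u_i = 0$, so that $\bu(t)\in\mathcal{Z}_{p,\varrho}$ throughout its lifespan. For non-negativity, a direct check shows that $F$ is quasi-positive: $f_i(\bu)\ge 0$ whenever $u_i=0$ and $u_j\ge 0$ for $j\ne i$; for instance $f_1|_{u_1=0}=k_2u_2^3+k_4u_4\ge 0$, and similar non-negative expressions arise for $f_2, f_3, f_4$. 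This quasi-positivity, together with the positivity-preserving character of $e^{-tA}$, implies the invariance of $\mathcal{Z}_{p,\varrho}^+$ by a standard approximation argument: regularize the data, truncate $F$ to render it Lipschitz, apply the maximum principle componentwise to the regularized system, and pass to the limit through the Lipschitz stability of the fixed-point construction.

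The semiflow property is then a direct consequence of uniqueness and the continuous dependence on initial data built into the contraction estimate. The main technical subtlety is the cubic growth of $F$, which cannot be controlled at the $L_p$ level alone; one must exploit the analytic semigroup smoothing together with $W_p^1 \hookrightarrow L_\infty$, which is precisely why the hypothesis $p>n$ is imposed. The positivity step is the secondary delicate point, since the mild formulation at the $L_p$ level does not afford the pointwise control needed for a direct maximum principle, so the approximation procedure described above is essential to transfer non-negativity from the regularized classical solutions to the limit.
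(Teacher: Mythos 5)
Your overall strategy coincides with the paper's: recast \eqref{U} as a semilinear Cauchy problem driven by an analytic, positivity-preserving semigroup on $L_p(\Omega)$, exploit $p>n$ to control the cubic nonlinearity through an embedding into $L_\infty$, obtain invariance of the positive cone from quasi-positivity of the reaction terms, and derive conservation of the total mass from the Neumann conditions together with the fact that the reactions sum to zero. The only structural difference is that the paper delegates the fixed-point machinery to a ready-made local existence theorem, while you sketch the contraction directly; that is a matter of presentation, not substance.

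The one step that does not close as written is your choice of intermediate space. If you use only the embedding $W_p^1(\Omega)\hookrightarrow L_\infty(\Omega)$ together with the weighted norm $t^{1/2}\|\bu(t)\|_{W_p^1}$, then the quadratic $L_\infty$ factor in $\|F(\bu(s))\|_{p}\lesssim\big(1+\|\bu(s)\|_\infty^2\big)\|\bu(s)\|_{p}$ produces the singularity $s^{-1}$, and neither $\int_0^t s^{-1}\,\rd s$ nor $\int_0^t (t-s)^{-1/2}s^{-1}\,\rd s$ is finite; consequently the Duhamel estimates for the $C([0,T],L_p)$ part and for the weighted $W_p^1$ part of your norm cannot be closed, and the contraction fails at this endpoint. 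The paper sidesteps this by working in the fractional space $E_\xi=W_p^{2\xi}(\Omega)$ with $n/(2p)<\xi<1/2$ (such a $\xi$ exists precisely because $p>n$): this space still embeds into $L_\infty(\Omega)$ since $2\xi p>n$, but the resulting singularities $(t-s)^{-\xi}$ and $s^{-2\xi}$ are integrable. Alternatively, your space can be salvaged without changing it by invoking the Gagliardo--Nirenberg inequality $\|u\|_\infty\lesssim\|u\|_{W_p^1}^{n/p}\|u\|_{p}^{1-n/p}$, which converts the weighted bound into $\|\bu(s)\|_\infty\lesssim s^{-n/(2p)}$ with $n/p<1$; one of these two fixes must be made explicit. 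The remaining ingredients (quasi-positivity plus positivity of the semigroup for invariance of the cone, cancellation of the reactions for mass conservation, the blow-up alternative and the semiflow property from uniqueness and continuous dependence) are sound and agree with the paper's argument.
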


\begin{proof}
Introducing
\begin{equation}\label{AA}
A:=\mathrm{diag}[d_1\Delta_N-k_1,d_2\Delta_N-1,d_3\Delta_N-k_3,d_4\Delta_N-k_4]
\end{equation}
and gathering the remaining linear and the nonlinear terms, it is readily seen that we can write~\eqref{U} as an abstract semilinear Cauchy problem
\begin{equation}\label{CP}
	\frac{\rd \bu}{\rd t}=A\bu+f(\bu)\,,\quad t>0\,,\qquad \bu(0)=\bu^0\,.
\end{equation}
Hereby, $A$ is the generator of a positive, strongly continuous analytic semigroup on the phase space $E_0:=L_p(\Omega)$ with domain $E_1:=W_{p,N}^2(\Omega)$, and the nonlinearity $f$ is given by 
\begin{equation}
    f(\bu) := \big( -u_1 u_2^2 + k_2 u_2^3 + k_4 u_4, u_1u_2^2 - k_2 u_2^3 + k_3 u_3, u_2, k_1 u_1\big)\label{deff}
\end{equation}
and satisfies
\begin{equation*}
	\|f(\bu)-f(\bv)\|_{p}\le  c\big( 1 + \|\bu\|_{\infty}^2+\|\bv\|_{\infty}^2\big)\|\bu-\bv\|_{p}\,,\quad (\bu,\bv)\in L_\infty(\Omega)\times L_\infty(\Omega)\,. 
\end{equation*}
Since $p>n$ we can fix $\xi\in \big(n/2p,1/2\big)$ so that 
\begin{equation}\label{xi}
	E_\xi:=(E_0,E_1)_{\xi,p}= W_{p}^{2\xi}(\Omega)\hookrightarrow L_\infty(\Omega)\,,
\end{equation}
see, e.g., \cite[4.3.3/Theorem]{Triebel}. Therefore, 
\begin{equation}\label{f}
	\|f(\bu)-f(\bv)\|_{E_0}\le c \big( 1 + \|\bu\|_{E_\xi}^2+\|\bv\|_{E_\xi}^2\big) \|\bu-\bv\|_{E_0}\,, \qquad (\bu,\bv)\in E_\xi^2\,.
\end{equation}
We may thus apply \cite[Theorem~1.2]{MW_PRSE} (with $\alpha=\gamma=0<\xi<1/2$ and $q=2$ therein) to deduce that~\eqref{CP} has for each $\bu^0\in E_0=L_p(\Omega)$ a unique maximal strong solution $\bu(\cdot;\bu^0)$ with regularity~\eqref{regl} and satisfying the blow-up criterion~\eqref{bcrit}. Moreover, the mapping $(t,\bu^0)\mapsto \bu(t;\bu^0)$ defines a semiflow on $L_p(\Omega)$. 

If $\bu^0\in L_p^+(\Omega)$, then $\bu(t;\bu^0)\in L_p^+(\Omega)$ for $t\in [0,t^+(\bu^0))$ since the semigroup generated by $A$ is positive and since for each $R>0$ there is $c(R)>0$ such that
\begin{equation*}
	f(\bv)+ c(R)\bv\ge 0\,,\qquad \bv\in L_\infty^+(\Omega) \ \text{ with }\ \|\bv\|_\infty\le R\,.
\end{equation*}
Finally, the property $\bu(t)\in \mathcal{Z}_{p,\varrho}$ for $t\in [0,t^+(\bu^0))$ is a consequence of the Neumann boundary conditions~\eqref{Ubc)} and the fact that the reaction terms on the right-hand side of~\eqref{u1}-\eqref{u4} sum up to zero.
\end{proof}

Next, we provide a broad class of Liapunov functionals for~\eqref{U} that we shall use to derive global existence of solutions and to investigate their long-term behavior.

\begin{prop}\label{P2}
Let $p\in (n,\infty)$ and $\varrho>0$. Given $\bu^0\in \mathcal{Z}_{p,\varrho}$ and a non-negative convex function $\phi\in C^2(\mathbb{R})$ such that
\begin{equation}
	\mathcal{E}_\phi(\bu^0) := \int_\Omega \left[ \phi(u_1^0) + \frac{1}{k_2} \phi(k_2 u_2^0) +  \frac{1}{k_2 k_3} \phi(k_2 k_3 u_3^0) + \frac{k_1}{k_4} \phi\left( \frac{k_4}{k_1} u_4^0\right) \right]\,\rd x   < \infty\,, \label{lfphi0}
\end{equation}
the corresponding strong solution $\bu = \bu(\cdot;\bu^0)$ to~\eqref{U} provided by \Cref{P1} satisfies $\mathcal{E}_\phi(\bu(t))\le \mathcal{E}_\phi(\bu^0)$ for all $t\in [0,t^+(\bu^0))$. More precisely,
\begin{equation}
	\frac{\rd}{\rd t} \mathcal{E}_\phi(\bu) + \mathcal{D}_\phi[\bu] + \mathcal{R}_\phi[\bu] = 0\,, \qquad t\in (0,t^+(\bu^0))\,, \label{lfphi}
\end{equation}
with $\mathcal{D}_\phi[\bu]\ge 0$ and $\mathcal{R}_\phi[\bu]\ge 0$ given by
\begin{align*}
	\mathcal{D}_\phi[\bu] & := \int_\Omega \left[ d_1 \phi''(u_1) |\nabla u_1|^2 + d_2 k_2 \phi''(k_2 u_2) |\nabla u_2|^2 \right]\,\rd x \\
	& \quad + \int_\Omega \left[d_3 k_2 k_3 \phi''(k_2k_3u_3) |\nabla u_3|^2 + \frac{d_4 k_4}{k_1} \phi''\left(\frac{k_4}{k_1} u_4\right) |\nabla u_4|^2\right]\,\rd x
\end{align*}
and
\begin{align*}
	\mathcal{R}_\phi[\bu] & := \int_\Omega u_2^2 (u_1-k_2 u_2) \big( \phi'(u_1) - \phi'(k_2 u_2) \big)\,\rd x  \\
	& \quad + \frac{1}{k_2} \int_\Omega (k_2u_2 - k_2 k_3 u_3) \big(\phi'(k_2 u_2) - \phi'(k_2 k_3 u_3) \big)\,\rd x \\ 
	& \quad + k_1 \int_\Omega \left( u_1 - \frac{k_4}{k_1} u_4 \right) \left( \phi'(u_1) - \phi'\left( \frac{k_4}{k_1} u_4 \right) \right)\,\rd x \,.
\end{align*}
\end{prop}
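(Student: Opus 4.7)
The approach is direct: differentiate $\mathcal{E}_\phi(\bu(t))$ in time, substitute the equations~\eqref{U}, integrate by parts the diffusive contributions against the homogeneous Neumann boundary conditions, and regroup the reaction terms according to the three reversible reactions underlying the model. The non-negativity of $\mathcal{D}_\phi$ and $\mathcal{R}_\phi$ will then follow from the convexity of $\phi$: indeed, $\phi''\ge 0$ handles the diffusive part, while the monotonicity of $\phi'$ gives $(a-b)(\phi'(a)-\phi'(b))\ge 0$ for all $a,b\in\R$, from which each integrand of $\mathcal{R}_\phi$ is pointwise non-negative (the first one carrying the additional non-negative factor $u_2^2$).

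The regularity supplied by \Cref{P1}, namely $\bu\in C^1((0,t^+(\bu^0)),L_p(\Omega))\cap C((0,t^+(\bu^0)),W_p^2(\Omega))$ combined with the embedding $W_p^2(\Omega)\hookrightarrow L_\infty(\Omega)$ (valid since $p>n$), ensures that $\phi'(u_i(t))$ and $\phi''(u_i(t))$ are bounded on $\Omega$ for every $t>0$ and makes differentiation under the integral sign legitimate. The scaling factors in the definition of $\mathcal{E}_\phi$ are chosen precisely so that the chain-rule prefactors cancel, giving
\begin{equation*}
\frac{\rd}{\rd t}\mathcal{E}_\phi(\bu) = \int_\Omega \Big[\phi'(u_1)\,\partial_t u_1 + \phi'(k_2 u_2)\,\partial_t u_2 + \phi'(k_2 k_3 u_3)\,\partial_t u_3 + \phi'\Big(\tfrac{k_4}{k_1}u_4\Big)\,\partial_t u_4\Big]\,\rd x.
\end{equation*}
Plugging in~\eqref{u1}--\eqref{u4} and integrating the Laplacian terms by parts produces exactly $-\mathcal{D}_\phi[\bu]$. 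For the reaction terms, I would pair them by the underlying reaction: the kinetic rate $-u_2^2(u_1-k_2 u_2)$ appearing in~\eqref{u1} with opposite sign in~\eqref{u2} combines into $-u_2^2(u_1-k_2 u_2)(\phi'(u_1)-\phi'(k_2 u_2))$; the exchange $-u_2+k_3 u_3$ between~\eqref{u2} and~\eqref{u3}, rewritten as $k_2^{-1}(k_2 u_2 - k_2 k_3 u_3)$, combines into $-k_2^{-1}(k_2 u_2 - k_2 k_3 u_3)(\phi'(k_2 u_2)-\phi'(k_2 k_3 u_3))$; and the exchange $-k_1 u_1+k_4 u_4$ between~\eqref{u1} and~\eqref{u4} combines into $-k_1(u_1-k_4 u_4/k_1)(\phi'(u_1)-\phi'(k_4 u_4/k_1))$. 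Their sum is precisely $-\mathcal{R}_\phi[\bu]$, which establishes~\eqref{lfphi} on $(0,t^+(\bu^0))$.

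Integrating~\eqref{lfphi} over $[t_1,t]\subset (0,t^+(\bu^0))$ then shows that $t\mapsto\mathcal{E}_\phi(\bu(t))$ is non-increasing on the open interval. The main technical step that remains is to propagate the inequality $\mathcal{E}_\phi(\bu(t))\le \mathcal{E}_\phi(\bu^0)$ up to $t=0$, since the convergence $\bu(t)\to\bu^0$ in $L_p(\Omega)$ need not imply convergence of $\mathcal{E}_\phi(\bu(t))$ to $\mathcal{E}_\phi(\bu^0)$ when $\phi$ grows super-polynomially. I would address this by approximation, replacing $\bu^0$ by truncated initial data $\bu^{0,n}\in L_\infty(\Omega)$ (adjusting one component by a small constant so as to stay in $\mathcal{Z}_{p,\varrho}$). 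Convexity of $\phi$ then yields the pointwise bound $\phi(u_i^{0,n})\le \phi(u_i^0)+\phi(0)$, so dominated convergence gives $\mathcal{E}_\phi(\bu^{0,n})\to \mathcal{E}_\phi(\bu^0)$, while $\bu^{0,n}\to\bu^0$ in $L_p(\Omega)$. For such bounded data the identity~\eqref{lfphi} extends up to $t=0$, hence $\mathcal{E}_\phi(\bu^n(t))\le \mathcal{E}_\phi(\bu^{0,n})$; passing to $n\to\infty$ via continuous dependence of the semiflow on initial data (provided by \Cref{P1}) and the lower semicontinuity of $\mathcal{E}_\phi$ on $L_p(\Omega)$ (a standard consequence of convexity) yields the desired bound.
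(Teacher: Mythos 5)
Your proof follows the same route as the paper's: differentiate each weighted term of $\mathcal{E}_\phi$, integrate the diffusion terms by parts against the Neumann conditions, pair the reaction terms according to the three reversible reactions, and invoke the convexity of $\phi$ (via $\phi''\ge 0$ and the monotonicity of $\phi'$) for the non-negativity of $\mathcal{D}_\phi$ and $\mathcal{R}_\phi$. The only divergence is your additional approximation argument to propagate the inequality to $t=0$ for unbounded initial data; the paper's proof is silent on this point (its applications only need $\phi=\phi_p$, where $L_p$-continuity of the trajectory suffices, and Lipschitz truncations applied from a time $t_0>0$ at which the solution is bounded), so your extra care is sound and, if anything, closes a small gap rather than introducing one.
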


Owing to the non-negativity of $\mathcal{D}_\phi[\bu]$ and $\mathcal{R}_\phi[\bu]$, the identity~\eqref{lfphi} guarantees that~$\mathcal{E}_\phi$ is a Liapunov functional for~\eqref{U}, so that $t\mapsto \mathcal{E}_\phi[\bu(t)]$ is a non-increasing function on~$[0,t^+(\bu^0))$.

When $\phi(r)=r\ln{r}-r+1$, $r\ge 0$, \Cref{P2} is reported in \cite[Eq.~(1.5)]{LJLWZ2022} for non-negative solutions. We extend it here to an arbitrary convex function.

\begin{proof}
We readily infer from~\eqref{U} that
\begin{align*}
	\frac{\rd}{\rd t} \int_\Omega \phi(u_1)\,\rd x & = - d_1 \int_\Omega \phi''(u_1) |\nabla u_1|^2\,\rd x - \int_\Omega u_2^2 (u_1-k_2u_2) \phi'(u_1)\,\rd x \\
	& \quad - k_1 \int_\Omega  \left( u_1 - \frac{k_4}{k_1} u_4 \right) \phi'(u_1)\,\rd x\,, 
\end{align*}
\begin{align*}
	\frac{1}{k_2} \frac{\rd}{\rd t} \int_\Omega \phi(k_2 u_2)\,\rd x & = - d_2 k_2 \int_\Omega \phi''(k_2 u_2) |\nabla u_2|^2\,\rd x + \int_\Omega u_2^2 (u_1-k_2u_2) \phi'(k_2u_2)\,\rd x \\
	& \quad - \frac{1}{k_2} \int_\Omega  \left( k_2u_2 - k_2k_3 u_3 \right) \phi'(k_2u_2)\,\rd x\,, 
\end{align*}
\begin{align*}
	\frac{1}{k_2k_3} \frac{\rd}{\rd t} \int_\Omega \phi(k_2k_3u_3)\,\rd x & = - d_3k_2k_3 \int_\Omega \phi''(k_2k_3u_3) |\nabla u_3|^2\,\rd x \\
	& \quad + \frac{1}{k_2} \int_\Omega (k_2u_2-k_2k_3u_3) \phi'(k_2k_3u_3)\,\rd x \,, 
\end{align*}
and
\begin{align*}
	\frac{k_1}{k_4} \frac{\rd}{\rd t} \int_\Omega \phi\left( \frac{k_4}{k_1} u_4 \right)\,\rd x & = - \frac{d_4 k_4}{k_1} \int_\Omega \phi''\left( \frac{k_4}{k_1} u_4 \right) |\nabla u_4|^2\,\rd x \\
	& \quad + k_1 \int_\Omega  \left( u_1 - \frac{k_4}{k_1} u_4 \right) \phi'\left( \frac{k_4}{k_1} u_4 \right)\,\rd x\,.
\end{align*}
Summing up the above four identities leads us to~\eqref{lfphi}. The non-negativity of $\mathcal{D}_\phi[\bu]$ and $\mathcal{R}_\phi[\bu]$ follows directly from the convexity of $\phi$. Combined with~\eqref{lfphi}, this property guarantees that $\mathcal{E}_\phi$ is a Liapunov functional for~\eqref{U}.
\end{proof}

To simplify the notation, we set $\mathcal{E}_p := \mathcal{E}_{\phi_p}$ for $p>1$ when $\phi_p(r) := |r|^p$ for $r\in\mathbb{R}$.

We now derive several consequences of \Cref{P2}, beginning with the global well-posedness of~\eqref{U} and the uniform boundedness of its solutions.

\begin{proof}[Proof of \Cref{T1}]
Let $p\in (n,\infty)$. Given $\varrho>0$ and $\bu^0\in \mathcal{Z}_{p,\varrho}$, the condition \mbox{$\mathcal{E}_p(\bu^0)<\infty$} is obviously satisfied and we readily deduce from \Cref{P2} that the corresponding solution $\bu=\bu(\cdot;\bu^0)$ to~\eqref{U} satisfies
\begin{equation*}
	\mathcal{E}_p(\bu(t))\le \mathcal{E}_p(\bu^0)\,, \qquad t\in [0,t^+(\bu^0))\,,
\end{equation*}
which clearly excludes the occurrence of~\eqref{bcrit}. Consequently, $t^+(\bu^0)=\infty$ and the global well-posedness of~\eqref{U}, along with the global semiflow property, is established. 

Let us next fix $t_0>0$. It follows from the continuous embedding of $W_{p,N}^2(\Omega)$ in $L_\infty(\Omega)$ that
\begin{equation}
	M(t_0) := \max\left\{ \|u_1(t_0)\|_\infty\,, k_2 \|u_2(t_0)\|_\infty\,, k_2k_3 \|u_3(t_0)\|_\infty\,, \frac{k_4}{k_1} \|u_4(t_0)\|_\infty \right\} <\infty\,. \label{supt0}
\end{equation}
Introducing the non-negative convex function 
\begin{equation*}
\Phi_{M(t_0)}(r) := \big( r - M(t_0) \big)_+ = \max\big\{ r-M(t_0)\,, 0 \big\}\,,\quad r\in\mathbb{R}\,,
\end{equation*}
we deduce from~\eqref{supt0} that $\mathcal{E}_{\Phi_{M(t_0)}}(\bu^0)=0$, while \Cref{P2}, along with an approximation argument to cope with the non-differentiability of the positive part, implies that
\begin{equation*}
	0 \le \mathcal{E}_{\Phi_{M(t_0)}}(\bu(t)) \le \mathcal{E}_{\Phi_{M(t_0)}}(\bu^0) = 0\,, \qquad t\ge t_0\,.
\end{equation*}
Combining the above property with the definition of $\mathcal{E}_{\Phi_{M(t_0)}}$, we find
\begin{equation*}
	\max\left\{ u_1(t,x)\,, k_2 u_2(t,x)\,, k_2k_3 u_3(t,x)\,, \frac{k_4}{k_1} u_4(t,x) \right\} \le M(t_0)
\end{equation*}
for $(t,x)\in [t_0,\infty)\times\Omega$. A similar argument involving the non-negative convex function $\Psi_{M(t_0)}$ defined by $\Psi_{M(t_0)}(r) = \big( -r-M(t_0) \big)_+$ for $r\in\mathbb{R}$ shows that
\begin{equation*}
	\min\left\{ u_1(t,x)\,, k_2 u_2(t,x)\,, k_2k_3 u_3(t,x)\,, \frac{k_4}{k_1} u_4(t,x) \right\} \ge - M(t_0)
\end{equation*}
for $(t,x)\in [t_0,\infty)\times\Omega$. Consequently,
\begin{equation}
	\max\left\{ \|u_1(t)\|_\infty\,, k_2 \|u_2(t)\|_\infty\,, k_2k_3 \|u_3(t)\|_\infty\,, \frac{k_4}{k_1} \|u_4(t)\|_\infty \right\} \le M(t_0)\,, \qquad t\ge t_0\,. \label{ulb}
\end{equation}
In particular, $f(\bu)$ belongs to $L_\infty((t_0,\infty)\times\Omega)$. We then infer from~\eqref{CP}, the exponential decay of the semigroup generated by $A$ defined in~\eqref{AA} (due to the positivity of $(k_1,k_3,k_4)$), and parabolic regularity estimates that $\bu$ belongs to $L_\infty((t_0,\infty),W_p^1(\Omega))$, thereby establishing~\eqref{unifb}. Due to the compact embedding of $W_p^1(\Omega)$ in $L_p(\Omega)$, the stated relative compactness of the orbits in $L_p(\Omega)$ readily follows. This completes the proof of \Cref{T1}.
\end{proof}

\section{Long-Term Behavior}\label{S3}

Having established the global well-posedness of~\eqref{U}, as well as the boundedness of its solutions, we now turn to their qualitative behavior. We first exploit the availability of Liapunov functionals and the semiflow property to show the long-term convergence of each solution to a steady state, see \Cref{S3.1}. The second step is to identify the limiting steady state, which requires a more detailed study of the semiflow in a neighborhood of each steady state, see \Cref{S3.2} for $\mathbf{E}_{\circ,\varrho}$ and \Cref{S3.3} for $\mathbf{E}_{b,\varrho}$. 

\subsection{Long-Term Convergence}\label{S3.1}

We first establish that $\mathcal{E}_2$ is a strict Liapunov functional for~\eqref{U} in $\mathcal{Z}_{p,\varrho}^+$.

\begin{lem}\label{L1}
Let $p\in (n,\infty)$, $p\ge 2$, and $\varrho>0$. Let $\bu^0\in \mathcal{Z}_{p,\varrho}^+$ be such that the corresponding solution $\bu=\bu(\cdot;\bu^0)$ satisfies
\begin{equation}
	\mathcal{E}_2(\bu(t)) = \mathcal{E}_2(\bu^0)\,, \qquad t\ge 0\,. \label{ls01}
\end{equation}	
Then there is $*\in\{b,\circ\}$ such that $\bu(t)=\bu^0=\mathbf{E}_{*,\varrho}$ for all $t\ge 0$.
\end{lem}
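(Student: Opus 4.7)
The plan is to apply Proposition~\ref{P2} with the quadratic weight $\phi = \phi_2$ (so that $\phi_2''\equiv 2$ and $\phi_2'(r)=2r$) and read off from the constraint $\mathcal{E}_2(\bu(t))\equiv \mathcal{E}_2(\bu^0)$ that both the diffusive dissipation $\mathcal{D}_2[\bu]$ and the reactive dissipation $\mathcal{R}_2[\bu]$ must vanish for almost every $t>0$. The hypothesis $p\ge 2$ ensures that $\mathcal{E}_2(\bu^0)<\infty$, so that Proposition~\ref{P2} applies and yields
\begin{equation*}
    \mathcal{D}_2[\bu(t)] + \mathcal{R}_2[\bu(t)] = 0\,, \qquad \text{a.e. }t>0\,,
\end{equation*}
with both terms non-negative, hence individually zero.

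Next I would unpack these two vanishing conditions. From $\mathcal{D}_2[\bu]=0$ I get $\nabla u_i(t,\cdot) \equiv 0$ for $1\le i\le 4$, so that $\bu(t)$ is spatially homogeneous for a.e. $t>0$. From $\mathcal{R}_2[\bu]=0$, after noting that $\phi_2'(a)-\phi_2'(b) = 2(a-b)$ turns each of the three integrands into a (weighted) perfect square, I obtain the algebraic relations
\begin{equation*}
    u_2^2 (u_1 - k_2 u_2)^2 = 0\,, \qquad (u_2 - k_3 u_3)^2 = 0\,, \qquad (k_1 u_1 - k_4 u_4)^2 = 0
\end{equation*}
pointwise a.e., which simplify to $u_2=k_3 u_3$, $k_1 u_1 = k_4 u_4$, and the dichotomy $u_2=0$ or $u_1 = k_2 u_2$.

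I would then split into the two cases produced by the dichotomy, using the conservation constraint $u_1+u_2+u_3+u_4 = \varrho/|\Omega|$ that is built into $\mathcal{Z}_{p,\varrho}$. In the case $u_2=0$, the relation $u_2 = k_3 u_3$ forces $u_3=0$, and $k_1 u_1 = k_4 u_4$ together with conservation yields exactly the components of $\mathbf{E}_{b,\varrho}$ defined in~\eqref{eqr}. In the case $u_1 = k_2 u_2$, combining with $u_3 = u_2/k_3$ and $u_4 = k_1 u_1/k_4 = k_1 k_2 u_2/k_4$, the conservation law gives $u_2 (k_2 k_3 k_4 + k_3 k_4 + k_4 + k_1 k_2 k_3)/(k_3 k_4) = \varrho/|\Omega|$, i.e. $u_2 = \varrho k_3 k_4/(|\Omega| K_0)$, which recovers precisely $\mathbf{E}_{\circ,\varrho}$.

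Finally, to pass from an a.e. statement to one that holds for every $t\ge 0$ and to conclude that $\bu$ is stationary, I would invoke the continuity of $\bu:[0,\infty)\to L_p(\Omega)$ from~\eqref{regl}. The set $\{\mathbf{E}_{b,\varrho}, \mathbf{E}_{\circ,\varrho}\}$ is closed in $L_p(\Omega)$, so $\bu(t)$ lies in this two-point set for every $t\ge 0$; since $[0,\infty)$ is connected and these two points are distinct (hence isolated), $t\mapsto \bu(t)$ is constant, equal to some $\mathbf{E}_{*,\varrho}$ with $*\in\{b,\circ\}$, and in particular $\bu^0 = \mathbf{E}_{*,\varrho}$. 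There is no real obstacle here; the main care is simply the bookkeeping linking the vanishing of the three squares in $\mathcal{R}_2$ to the two admissible stationary profiles, and the observation that the quadratic choice $\phi_2$ is what turns the generic Liapunov identity of Proposition~\ref{P2} into a sum of squares on which the equality case is transparent.
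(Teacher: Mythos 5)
Your proof is correct, and while the first half coincides with the paper's argument (apply \Cref{P2} with $\phi_2$, deduce $\mathcal{D}_2[\bu]=\mathcal{R}_2[\bu]=0$, hence spatial homogeneity together with the relations $U_3=U_2/k_3$, $k_1U_1=k_4U_4$ and $U_2^2(U_1-k_2U_2)=0$), the identification step at the end is genuinely different and in fact more economical. The paper extracts a further algebraic identity from the constancy of $\mathcal{E}_2$ itself, namely $(1+k_1/k_4)U_1^2+k_2(1+1/k_3)U_2^2=\mathcal{E}_2(\bu^0)/|\Omega|$, combines it with the mass constraint to get a single equation for $U_1(t)$, and then differentiates in time to conclude that $U_1$ (hence $\bU$) is constant before resolving the dichotomy. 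You instead observe that, at each fixed time, the dichotomy $U_2(t)=0$ or $U_1(t)=k_2U_2(t)$ together with the other two relations and the conservation law $\sum_i U_i(t)=\varrho/|\Omega|$ already pins $\bU(t)$ down to one of the two distinct points $\mathbf{E}_{b,\varrho}$, $\mathbf{E}_{\circ,\varrho}$ of \eqref{eqr}; since $t\mapsto\bu(t)$ is continuous with values in $L_p(\Omega)$ and $[0,\infty)$ is connected, the trajectory cannot jump between the two closed disjoint preimages, so it is constant. This bypasses both the extra identity and the differentiation argument. The only point to state cleanly is that the vanishing of $\mathcal{D}_2+\mathcal{R}_2$ holds for \emph{every} $t>0$ (not merely a.e.), which follows from the regularity \eqref{reg} making \eqref{lfphi} a genuine pointwise-in-time identity; but even the a.e.\ version suffices once combined with continuity and the closedness of the two-point set. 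Your bookkeeping of the two cases against \eqref{eqr} is also correct.
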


\begin{proof}
It readily follows from~\eqref{lfphi} and~\eqref{ls01} that
\begin{equation}
	\mathcal{D}_2[\bu(t)] = \mathcal{R}_2[\bu(t)] = 0\,, \qquad t\ge 0\,, \label{ls02}
\end{equation}
which implies, thanks to the positivity of the diffusion and reaction coefficients, that
\begin{subequations}\label{ls03}
\begin{equation}
	u_i(t,x) = U_i(t) := \frac{1}{|\Omega|} \int_\Omega u_i(t,y)\,\rd y\,, \quad (t,x)\in  [0,\infty)\times\Omega\,, \ 1\le i \le 4\,, \label{ls03a}
\end{equation}
and
\begin{equation}
	U_1(t) = \frac{k_4}{k_1} U_4(t)\,, \ U_3(t) = \frac{1}{k_3} U_2(t)\,, \ U_2^2(t) (U_1(t)-k_2 U_2(t)) = 0\,, \quad t\ge 0\,. \label{ls03b}
\end{equation}
\end{subequations}
Combining~\eqref{ls01}, \eqref{ls03} and the property $\bu(t)\in\mathcal{Z}_{p,\varrho}^+$ provides two additional identities relating the components of $\bU(t):=(U_i(t))_{1\le i\le 4}$, namely,
\begin{equation}
	\left( 1 + \frac{k_1}{k_4} \right) U_1^2(t) + k_2 \left( 1 + \frac{1}{k_3} \right) U_2^2(t) = \frac{\mathcal{E}_2(\bu^0)}{|\Omega|} \,, \qquad t\ge 0\,, \label{ls04} 
\end{equation}
and
\begin{equation}
	\left( 1 + \frac{k_1}{k_4} \right) U_1(t) + \left( 1 + \frac{1}{k_3} \right) U_2(t) = \frac{\varrho}{|\Omega|}\,, \qquad t\ge 0\,. \label{ls05} 
\end{equation}
Plugging~\eqref{ls05} into~\eqref{ls04} leads us to a single algebraic equation for $U_1$:
\begin{equation*}
	\frac{k_1+k_4}{k_4} U_1^2(t) + \frac{k_2k_3}{1+k_3} \left( \frac{\varrho}{|\Omega|} - \frac{k_1+k_4}{k_4} U_1(t) \right)^2 = \frac{\mathcal{E}_2(\bu^0)}{|\Omega|}\,, \qquad t\ge 0\,.
\end{equation*}
Owing to the regularity of $\bu$, see \Cref{T1}, we may differentiate the above identity with respect to time and find
\begin{equation*}
	\left[ \left( 1 + \frac{k_2k_2(k_1+k_4)}{k_4(1+k_3)} \right) U_1(t) - \frac{k_2 k_3}{1+k_3} \frac{\varrho}{|\Omega|} \right] U_1'(t) = 0\,, \qquad t\ge 0\,,
\end{equation*}
from which we deduce that $U_1(t) = U_1^0 := U_1(0)$ for $t\ge 0$ due to the continuity of $U_1$. This last property, along with~\eqref{ls03b} and~\eqref{ls05}, entails that 
\begin{subequations}\label{ls07}
\begin{equation}
	U_i(t) = U_i^0 := U_i(0)\,, \qquad t\ge 0\,,\ 1\le i \le 4\,, \label{ls07a} 
\end{equation}
along with
\begin{equation}
	U_4^0 = \frac{k_1}{k_4} U_1^0\,, \ U_3^0 = \frac{U_2^0}{k_3} \,, \ \big(U_2^0\big)^2 \big( U_1^0 - k_2 U_2^0 \big) = 0\,, \label{ls07b}
\end{equation}
and
\begin{equation}
	\frac{k_1+k_4}{k_4} U_1^0 + \frac{1+k_3}{k_3} U_2^0 = \frac{\varrho}{|\Omega|}\,. \label{ls07c}
\end{equation}
\end{subequations}
On the one hand, if $U_2^0=0$, then $U_3^0=0$ by~\eqref{ls07b}, while \eqref{ls07} gives
\begin{equation*}
	U_1^0 = \frac{\varrho k_4}{|\Omega| (k_1+k_4)}\,, \quad U_4^0 = \frac{\varrho k_1}{|\Omega| (k_1+k_4)}.
\end{equation*}
Consequently, $\bU^0 = \mathbf{E}_{b,\varrho}$, see~\eqref{eqr}. On the other hand, if $U_2^0\ne 0$, then $U_2^0=U_1^0/k_2$ by~\eqref{ls07b} and we readily infer from~\eqref{ls07} that $\bU^0 = \mathbf{E}_{\circ,\varrho}$, see~\eqref{eqr}. This completes the proof. 
\end{proof}

\begin{cor}\label{C1a}
Let $p\in (n,\infty)$, $p\ge 2$, and $\varrho>0$. Given $\bu^0\in \mathcal{Z}_{p,\varrho}^+$, there is $*\in\{b,\circ\}$ (depending on $\bu^0$) such that
\begin{equation*}
	\lim_{t\to\infty} \|\bu(t) - \mathbf{E}_{*,\varrho} \|_p=0\,. 
\end{equation*}	
\end{cor}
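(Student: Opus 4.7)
The plan is to invoke LaSalle's invariance principle, combining the tools already assembled in the paper: the global semiflow together with the relative compactness of its orbits in $L_p(\Omega)$ (\Cref{T1}), the Liapunov functional $\mathcal{E}_2$ (\Cref{P2}), and the rigidity statement (\Cref{L1}) asserting that any orbit along which $\mathcal{E}_2$ is conserved must be identically equal to one of the two homogeneous steady states.

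More precisely, I would introduce the $\omega$-limit set
\[
    \omega(\bu^0) := \bigcap_{t\ge 0} \overline{\{\bu(s;\bu^0)\,:\, s\ge t\}}^{L_p(\Omega)}\,,
\]
the closure being taken in $L_p(\Omega)$. Since the positive orbit is relatively compact in $L_p(\Omega)$ by \Cref{T1} and the semiflow is continuous on $\mathcal{Z}_{p,\varrho}$, a classical argument shows that $\omega(\bu^0)$ is a non-empty, compact, connected, and positively invariant subset of $\mathcal{Z}_{p,\varrho}^+$ (the latter being closed in $L_p(\Omega)$ and positively invariant by \Cref{T1}). Because $p\ge 2$ and $\Omega$ is bounded, the embedding $L_p(\Omega) \hookrightarrow L_2(\Omega)$ makes $\mathcal{E}_2$ continuous on $L_p(\Omega)$; together with the monotonicity and non-negativity of $t\mapsto \mathcal{E}_2(\bu(t))$ from \Cref{P2}, this forces $\mathcal{E}_2$ to take a single constant value $L:=\lim_{t\to\infty} \mathcal{E}_2(\bu(t))\ge 0$ on $\omega(\bu^0)$.

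Invariance then enables a pointwise application of \Cref{L1}: for any $\bv \in \omega(\bu^0)$, the orbit $\bu(\cdot;\bv)$ stays in $\omega(\bu^0)$, hence $\mathcal{E}_2(\bu(t;\bv)) = L = \mathcal{E}_2(\bv)$ for every $t\ge 0$, and \Cref{L1} identifies $\bv$ with either $\mathbf{E}_{b,\varrho}$ or $\mathbf{E}_{\circ,\varrho}$. Consequently $\omega(\bu^0) \subseteq \{\mathbf{E}_{b,\varrho}, \mathbf{E}_{\circ,\varrho}\}$, and the connectedness of $\omega(\bu^0)$ —a two-point subset of $L_p(\Omega)$ being disconnected— forces it to reduce to a single equilibrium $\{\mathbf{E}_{*,\varrho}\}$ with $*\in\{b,\circ\}$. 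The convergence assertion then follows by a standard precompactness argument: any sequence $t_n\to\infty$ along which $\|\bu(t_n) - \mathbf{E}_{*,\varrho}\|_p$ stayed bounded away from zero would admit, by relative compactness, a subsequential limit in $\omega(\bu^0)\setminus\{\mathbf{E}_{*,\varrho}\}$, a contradiction.

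I do not foresee a genuine obstacle, as the substantive content has already been packaged into \Cref{T1}, \Cref{P2}, and especially \Cref{L1}. The only mildly technical step is the verification that $\omega(\bu^0)$ is connected, which is a routine argument using continuity of the semiflow and precompactness of the orbit; the rest is essentially the mechanical assembly of the abstract LaSalle invariance principle.
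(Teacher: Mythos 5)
Your proposal is correct and follows essentially the same route as the paper: invoking LaSalle's invariance principle with the strict Liapunov functional $\mathcal{E}_2$ (via \Cref{L1}), the relative compactness of orbits from \Cref{T1}, and the connectedness of the $\omega$-limit set to single out one of the two equilibria. The paper simply cites \cite[(17.2)~Theorem \& (18.3)~Theorem]{Ama1990} for the abstract LaSalle machinery that you spell out by hand.
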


\begin{proof}
According to \Cref{L1}, $\mathcal{E}_2$ is a strict Liapunov functional for the semiflow $\bu(\cdot;\bu^0)$ on $\mathcal{Z}_{p,\varrho}^+$. Thus, given $\bu^0\in \mathcal{Z}_{p,\varrho}^+$, we infer from the relative compactness of the orbit $\{\bu(t;\bu^0)\ :\ t\ge 0\}$ in $L_p(\Omega)$, see \Cref{T1}, and LaSalle's invariance principle that the $\omega$-limit set $\omega(\bu^0)$ defined by
\begin{equation*}
	\omega(\bu^0) := \bigcap_{t>0}\overline{\bu\big((t,\infty);\bu^0\big)}
\end{equation*}
is non-empty, connected and compact and included in $\big\{\mathbf{E}_{b,\varrho}\,, \mathbf{E}_{\circ,\varrho}\big\}$, see \cite[(17.2)~Theorem \& (18.3)~Theorem]{Ama1990} for instance. The connectedness of $\omega(\bu^0)$ completes the proof.
\end{proof}

\begin{cor}\label{C1}
Let $p\in (n,\infty)$, $p\ge 2$, and $\varrho>0$. Then $\mathcal{E}_2(\mathbf{E}_{\circ,\varrho}) < \mathcal{E}_2(\mathbf{E}_{b,\varrho})$ and, if $\bu^0\in \mathcal{Z}_{p,\varrho}^+$ satisfies $\mathcal{E}_2(\bu^0)<\mathcal{E}_2(\mathbf{E}_{b,\varrho})$, then $\mathbf{E}_{*,\varrho}=\mathbf{E}_{\circ,\varrho}$ in~\eqref{ltb}. 
\end{cor}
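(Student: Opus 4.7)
The plan is to verify the strict inequality by direct computation on the spatially homogeneous steady states, then combine it with the monotonicity of $\mathcal{E}_2$ along trajectories and \Cref{C1a} to rule out convergence to $\mathbf{E}_{b,\varrho}$ by a contradiction argument.

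First, I would make the formula for $\mathcal{E}_2$ fully explicit: applying \Cref{P2} with $\phi(r)=r^2$ gives
\begin{equation*}
\mathcal{E}_2(\bu) = \int_\Omega \Big[ u_1^2 + k_2 u_2^2 + k_2 k_3 u_3^2 + \frac{k_4}{k_1} u_4^2 \Big]\,\rd x\,.
\end{equation*}
Evaluating at $\mathbf{E}_{b,\varrho}$ and at $\mathbf{E}_{\circ,\varrho}$ and using the definitions in~\eqref{eqr}, the key observation is a clean factorization: the bracket evaluated at $\mathbf{E}_{\circ,\varrho}$ collapses to $k_2k_3k_4\,K_0$, while at $\mathbf{E}_{b,\varrho}$ it collapses to $k_4(k_1+k_4)$. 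Hence
\begin{equation*}
\mathcal{E}_2(\mathbf{E}_{\circ,\varrho}) = \frac{\varrho^2 k_2 k_3 k_4}{|\Omega|\,K_0}\,, \qquad \mathcal{E}_2(\mathbf{E}_{b,\varrho}) = \frac{\varrho^2 k_4}{|\Omega|(k_1+k_4)}\,.
\end{equation*}
The strict inequality $\mathcal{E}_2(\mathbf{E}_{\circ,\varrho}) < \mathcal{E}_2(\mathbf{E}_{b,\varrho})$ then boils down to showing $k_2k_3(k_1+k_4) < K_0$, which after cancelling $k_1 k_2 k_3 + k_2 k_3 k_4$ from both sides reduces to the obviously true $0 < k_4(1+k_3)$.

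For the second assertion, suppose $\bu^0 \in \mathcal{Z}_{p,\varrho}^+$ satisfies $\mathcal{E}_2(\bu^0) < \mathcal{E}_2(\mathbf{E}_{b,\varrho})$. By \Cref{C1a}, there is $*\in\{b,\circ\}$ such that $\bu(t;\bu^0) \to \mathbf{E}_{*,\varrho}$ in $L_p(\Omega)$ as $t\to\infty$. Since $p\ge 2$ and $\Omega$ is bounded, this convergence takes place in $L_2(\Omega)$, so $\mathcal{E}_2$ (being a weighted squared $L_2$-norm) is continuous along the orbit and $\mathcal{E}_2(\bu(t))\to \mathcal{E}_2(\mathbf{E}_{*,\varrho})$. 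Combining with the non-increasing property of $t\mapsto \mathcal{E}_2(\bu(t))$ established in \Cref{P2} gives
\begin{equation*}
\mathcal{E}_2(\mathbf{E}_{*,\varrho}) = \lim_{t\to\infty} \mathcal{E}_2(\bu(t)) \le \mathcal{E}_2(\bu^0) < \mathcal{E}_2(\mathbf{E}_{b,\varrho})\,,
\end{equation*}
which excludes $*=b$ and forces $*=\circ$.

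There is no genuine obstacle here, as the proof reduces to an algebraic identity plus an energy-monotonicity argument; the only point requiring a bit of care is the passage to the limit $\mathcal{E}_2(\bu(t))\to \mathcal{E}_2(\mathbf{E}_{*,\varrho})$, which follows from $p\ge 2$ and the continuous embedding $L_p(\Omega)\hookrightarrow L_2(\Omega)$ on the bounded domain $\Omega$.
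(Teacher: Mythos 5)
Your proof is correct and takes essentially the same route as the paper's: the same direct evaluation of $\mathcal{E}_2$ at the two equilibria (collapsing to $\varrho^2 k_2k_3k_4/(|\Omega|K_0)$ versus $\varrho^2 k_4/(|\Omega|(k_1+k_4))$), and the same monotonicity-plus-limit argument to exclude $\mathbf{E}_{*,\varrho}=\mathbf{E}_{b,\varrho}$. Your explicit justification of $\mathcal{E}_2(\bu(t))\to\mathcal{E}_2(\mathbf{E}_{*,\varrho})$ via the embedding $L_p(\Omega)\hookrightarrow L_2(\Omega)$, and your citing of \Cref{C1a} rather than \Cref{T2}, are minor (arguably cleaner) presentational differences only.
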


An immediate consequence of \Cref{C1} is that $\mathbf{E}_{\circ,\varrho}$ is locally asymptotically stable in $L_p(\Omega)$ for the semiflow $\bu(\cdot;\bu^0)$. We shall actually improve this result in the next section and establish its local exponential stability by means of the principle of linearized stability.

\begin{proof}[Proof of \Cref{C1}]
Owing to~\eqref{eqr}, 
\begin{align*}
	\mathcal{E}_2(\mathbf{E}_{\circ,\varrho} )& = \frac{\varrho^2}{|\Omega|} \frac{(k_2k_3k_4)^2 + k_2(k_3k_4)^2 + k_2k_3k_4^2 + k_1 (k_2k_3)^2k_4}{K_0^2} \\
	& = \frac{\varrho^2}{|\Omega|} \frac{k_2k_3k_4}{K_0} < \frac{\varrho^2}{|\Omega|} \frac{k_4}{k_1+k_4} = \mathcal{E}_2(\mathbf{E}_{b,\varrho})\,.
\end{align*}
Now, given $\bu^0\in \mathcal{Z}_{p,\varrho}^+$ satisfying $\mathcal{E}_2(\bu^0)<\mathcal{E}_2(\mathbf{E}_{b,\varrho})$, we infer from \Cref{P2}, \Cref{T2} and the semiflow properties that
\begin{equation*}
	\mathcal{E}_2(\mathbf{E}_{*,\varrho}) = \lim_{t\to\infty} \mathcal{E}_2(\bu(t)) \le \mathcal{E}_2(\bu^0) <\mathcal{E}_2(\mathbf{E}_{b,\varrho})\,,
\end{equation*} 
and $\mathbf{E}_{*,\varrho}=\mathbf{E}_{b,\varrho}$ is excluded. Consequently, $\mathbf{E}_{*,\varrho}=\mathbf{E}_{\circ,\varrho}$, as claimed.
\end{proof}

\subsection{Stability of Equilibria}\label{S3.2}

Fix $p\in (n,\infty)$ and $p\ge 2$. Setting
\begin{equation*}
    \mathbb{P}\bu:=\frac{P\bu}{4\vert\Omega\vert}\mathbf{1}\,,\quad \bu\in L_p(\Omega)\,,
\end{equation*}
with
\begin{equation*}
P\bu:=\int_\Omega \sum_{i=1}^4 u_i(x)\,\rd x\,,\qquad \mathbf{1}=(1,1,1,1)\,,
\end{equation*}
it readily follows that $\mathbb{P}\in\mathcal{L}(L_p(\Omega))$ is a projection with $\ker \mathbb{P}= \mathcal{Z}_{p,0}$. Therefore, we have the direct sum decomposition
\begin{equation*}
    L_p(\Omega)=\mathbb{P}(L_p(\Omega))\oplus \mathcal{Z}_{p,0}\,.
\end{equation*}
We then introduce 
\begin{align*}
    \mathcal{A} & :=\mathrm{diag}[d_1\Delta_N,d_2\Delta_N,d_3\Delta_N,d_4\Delta_N]\,, \\
	F(\bu) & := f(\bu) + \big(-k_1 u_1,-u_2,-k_3u_3,-k_4 u_4\big)\,, 
\end{align*}
with $f$ defined in~\eqref{deff}, so that~\eqref{U} can be written as
\begin{equation}\label{CP1}
	\frac{\rd \bu}{\rd t}=\mathcal{A}\bu+F(\bu)\,,\quad t>0\,,\qquad \bu(0)=\bu^0\,.
\end{equation}
Setting $\mathbf{v}:=(1-\mathbb{P})\bu$ so that $\bu=\mathbb{P}\bu+\mathbf{v}$ and noticing that $\mathbb{P}\mathcal{A} =\mathcal{A}\mathbb{P} = \mathbf{0}$ and that $\mathbb{P}F(\bu)=0$, it readily follows that $\mathbb{P}\bu(t)=\mathbb{P}\bu^0$. Hence, \eqref{CP1} is equivalent to the Cauchy problem 
\begin{equation}\label{CP2}
	\frac{\rd \mathbf{v}}{\rd t}=\mathcal{A}\mathbf{v}+g(\mathbf{v})\,,\quad t>0\,,\qquad \mathbf{v}(0)=(1-\mathbb{P})\bu^0\,,
\end{equation}
in~$\mathcal{Z}_{p,0}$, where 
$$
g(\mathbf{v}):=F(\mathbb{P}\bu^0+\mathbf{v})\,.
$$
Note that the operator $\mathcal{A}\vert_{((1-\mathbb{P})W_{p,N}^2(\Omega))}$ still generates a strongly continuous analytic semigroup on $\mathcal{Z}_{p,0}$ and that $g\in C^1((1-\mathbb{P})W_p^{2\xi}(\Omega),\mathcal{Z}_{p,0})$ for $\xi\in (n/(2p),1/2)$ (see the proof of Proposition~\ref{P1}).

Consider now $\varrho>0$ and $\bu^0\in \mathcal{Z}_{p,\varrho}^+$, so that 
\begin{equation*}
   P\bu^0=\varrho \;\;\text{ and }\;\; g(\mathbf{v}) = F\left( \frac{\varrho}{4|\Omega|}\mathbf{1} + \mathbf{v} \right),
\end{equation*} 
in view of $\bu(t)\in\mathcal{Z}_{p,\varrho}^+$, see \Cref{T1}. Then $\mathbf{v}_{*,\varrho}:=(1-\mathbb{P})\mathbf{E}_{*,\varrho}$ is an equilibrium to~\eqref{CP2} for $*\in\{b,\circ\}$ since $P\mathbf{E}_{*,\varrho}=\varrho$, and 
 the corresponding linearized operator 
 \begin{equation*}
    \mathcal{L}_{*,\varrho}:=\mathcal{A} +Dg(\mathbf{v}_{*,\varrho})
\end{equation*}
is given by
\begin{equation}
	\mathcal{L}_{*,\varrho}[\bv] = 
	\begin{pmatrix} 
		d_1 \Delta v_1 - \mathbf{E}_{*,\varrho,2}^2 v_1 + k_2 \mathbf{E}_{*,\varrho,2}^2 v_2 - k_1 v_1 + k_4 v_4\vspace{0.2cm} \\ 
		d_2 \Delta v_2 + \mathbf{E}_{*,\varrho,2}^2 v_1 - k_2 \mathbf{E}_{*,\varrho,2}^2 v_2 - v_2 + k_3 v_3\vspace{0.2cm} \\
		d_3 \Delta v_3 + v_2 - k_3 v_3\vspace{0.2cm} \\
		d_4 \Delta v_4 + k_1 v_1 - k_4 v_4
	\end{pmatrix}\,, \label{ll01}
\end{equation}
after using the identity $\mathbf{E}_{*,\varrho,1} \mathbf{E}_{*,\varrho,2} = k_2 \mathbf{E}_{*,\varrho,2}^2$, which is valid for both equilibria according to~\eqref{eqr}. Clearly, $\mathcal{L}_{*,\varrho}$ with domain $(1-\mathbb{P})(W_{p,N}^2(\Omega))$ generates an analytic semigroup on $\mathcal{Z}_{p,0}$ since $Dg(\mathbf{v}_{*,\varrho})\in\mathcal{L}(\mathcal{Z}_{p,0},\mathcal{Z}_{p,0})$. A straightforward computation gives the following identity: 

\begin{lem}\label{L2}
Let $*\in\{b,\circ\}$. Consider $\bv\in \mathcal{Z}_{p,0}$ (possibly complex valued) and set 
\begin{equation}
	\bV(\bv) := \left( v_1\,,k_2 v_2\,, k_2k_3v_3\,, \frac{k_4}{k_1}v_4 \right)\,. \label{ll02}
\end{equation}
Then
\begin{equation}
\begin{split}
	\int_\Omega \mathcal{L}_{*,\varrho}[\bv] \overline{\bV(\bv)}\,\rd x & = - \sum_{i=1}^4 D_i \|\nabla v_i\|_2^2 - \mathbf{E}_{*,\varrho,2}^2 \|v_1-k_2 v_2\|_2^2 \\
	& \quad - k_2 \|v_2-k_3v_3\|_2^2 - \frac{1}{k_1} \|k_1 v_1 - k_4 v_4\|_2^2 \le 0\,,
\end{split} \label{ll03}
\end{equation}
with
\begin{equation*}
	D_1 := d_1\,, \quad D_2 := d_2k_2\,, \quad D_3:= d_3k_2k_3\,, \quad D_4 := d_4 \frac{k_4}{k_1}\,.
\end{equation*}
\end{lem}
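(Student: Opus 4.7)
The proof is a direct computation that reflects, at the linearized level, the entropy structure already exploited in Proposition~\ref{P2}. Indeed, the weights $1,\,k_2,\,k_2 k_3,\,k_4/k_1$ appearing in the definition of $\bV(\bv)$ are exactly the coefficients appearing in $\mathcal{E}_\phi$, so that testing the linearized equation against $\overline{\bV(\bv)}$ is the linear analogue of differentiating $\mathcal{E}_\phi$ with $\phi(r) = r^2/2$.

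The plan is to multiply the $i$-th row of $\mathcal{L}_{*,\varrho}[\bv]$ by $\overline{V_i(\bv)}$, integrate over $\Omega$, and treat diffusive and reactive contributions separately. For the diffusive part, integration by parts together with the Neumann boundary condition gives, for each $1\le i \le 4$, a contribution $-D_i \|\nabla v_i\|_2^2$ (with the $k_j$-factors pulled out of $\overline{V_i}$), which accounts for the first sum in \eqref{ll03}.

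For the reactive part, I would group the terms into the three reaction channels built into the model. Writing $E := \mathbf{E}_{*,\varrho,2}^2$, the terms proportional to $E$ that arise from the first two components combine, after using $\mathbf{E}_{*,\varrho,1}\mathbf{E}_{*,\varrho,2} = k_2 \mathbf{E}_{*,\varrho,2}^2$, into
\begin{equation*}
-E\bigl(|v_1|^2 - k_2 v_2 \overline{v_1} - k_2 v_1 \overline{v_2} + k_2^2 |v_2|^2\bigr) = -E\, |v_1 - k_2 v_2|^2.
\end{equation*}
Similarly, the $k_3$-terms coming from the second row (weighted by $k_2$) and from the third row (weighted by $k_2 k_3$) regroup as $-k_2 |v_2 - k_3 v_3|^2$, while the $k_1$- and $k_4$-terms from the first row (weight $1$) and the fourth row (weight $k_4/k_1$) regroup as $-(1/k_1)|k_1 v_1 - k_4 v_4|^2$. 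Integrating over $\Omega$ yields the remaining three squared $L_2$-norms in \eqref{ll03}.

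The bookkeeping is routine but the cancellations are not automatic: the only genuine check is that each cross term $v_i \overline{v_j}$ (with $i\ne j$) produced by the reaction matrix appears twice, with coefficients that combine into the correct Hermitian square. This is precisely why the weights in $\bV(\bv)$ must be chosen as in~\eqref{ll02}; any other choice would leave unmatched off-diagonal terms. Once these three pairwise matchings are verified, the inequality in~\eqref{ll03} is immediate since each term on the right-hand side is manifestly non-positive.
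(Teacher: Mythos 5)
Your proposal is correct and matches the paper's intent exactly: the paper offers no written proof, dismissing the identity as "a straightforward computation," and your computation — integration by parts with the Neumann condition for the diffusive part, plus pairing the off-diagonal reaction terms into the three Hermitian squares $-E|v_1-k_2v_2|^2$, $-k_2|v_2-k_3v_3|^2$, $-(1/k_1)|k_1v_1-k_4v_4|^2$ — is precisely the omitted verification. (One trivial remark: the identity $\mathbf{E}_{*,\varrho,1}\mathbf{E}_{*,\varrho,2}=k_2\mathbf{E}_{*,\varrho,2}^2$ has already been absorbed into the form of $\mathcal{L}_{*,\varrho}$ in~\eqref{ll01}, so it need not be invoked again when regrouping the $E$-terms.)
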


We use the just established identity to deduce information on the spectrum of $\mathcal{L}_{*,\varrho}$ in~$\mathcal{Z}_{p,0}$.

\begin{prop}\label{P3}
For $*\in\{b,\circ\}$, the operator $\mathcal{L}_{*,\varrho}$ in $\mathcal{Z}_{p,0}$ has compact resolvent. In particular, its spectrum is discrete and contains only eigenvalues. Moreover, it is included in $(-\infty,0]$. In fact, $0\in \sigma\big( \mathcal{L}_{b,\varrho} \big)$ with 
\begin{equation*}
    \ker \mathcal{L}_{b,\varrho} = \mathbb{R} \big( k_4(1+k_3), - k_3(k_1+k_4), -(k_1+k_4), k_1(1+k_3) \big)\,,
\end{equation*}
and there are $(\lambda_{b,\varrho},\lambda_{\circ,\varrho})\in (0,\infty)^2$ such that 
\begin{equation*}
    \sigma\big( \mathcal{L}_{b,\varrho} \big) \setminus \{0\}\subset (-\infty,-\lambda_{b,\varrho}] \quad \text{and}\quad \sigma\big( \mathcal{L}_{\circ,\varrho} \big) \subset (-\infty,-\lambda_{\circ,\varrho}]\,.
\end{equation*}
\end{prop}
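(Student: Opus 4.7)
The plan is to leverage Lemma~\ref{L2} together with the Riesz--Schauder theory for compact resolvents. Since $\mathcal{L}_{*,\varrho}$ is the bounded perturbation $Dg(\mathbf{v}_{*,\varrho})$ of the restriction of $\mathcal{A}$ to $(1-\mathbb{P})W_{p,N}^2(\Omega)$, and the embedding $W_p^2(\Omega)\hookrightarrow L_p(\Omega)$ is compact on a bounded smooth domain, the resolvent of $\mathcal{L}_{*,\varrho}$ is compact (for instance by writing $(\mu-\mathcal{L}_{*,\varrho})^{-1}=(I-(\mu-\mathcal{A})^{-1}Dg(\mathbf{v}_{*,\varrho}))^{-1}(\mu-\mathcal{A})^{-1}$ for $\mu$ large enough and invoking that the composition of a bounded and a compact operator is compact). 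Consequently, $\sigma(\mathcal{L}_{*,\varrho})$ consists of at most countably many isolated eigenvalues of finite algebraic multiplicity with $\infty$ the sole possible accumulation point.

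To locate $\sigma(\mathcal{L}_{*,\varrho})$ in $(-\infty,0]$, I would take an eigenpair $(\lambda,\bv)$ with $\bv$ nonzero and \emph{a priori} complex-valued. Since $p\ge 2$ and $\Omega$ is bounded, $\bv$ belongs to $L_2(\Omega,\C^4)$, so Lemma~\ref{L2} applies and gives
\begin{equation*}
	\lambda \left( \|v_1\|_2^2 + k_2 \|v_2\|_2^2 + k_2 k_3 \|v_3\|_2^2 + \frac{k_4}{k_1} \|v_4\|_2^2 \right) = \int_\Omega \mathcal{L}_{*,\varrho}[\bv]\cdot \overline{\bV(\bv)}\,\rd x \le 0\,.
\end{equation*}
The prefactor of $\lambda$ on the left is a strictly positive real number, so $\lambda\in\R$ and $\lambda\le 0$.

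Next, to compute the kernel, I would set $\lambda=0$ in the identity above. The equality case in Lemma~\ref{L2} forces each $v_i$ to be spatially constant, say $V_i\in\R$, and to satisfy $V_2=k_3 V_3$ and $k_1 V_1 = k_4 V_4$; when $*=\circ$, the positivity of $\mathbf{E}_{\circ,\varrho,2}$ additionally yields $V_1=k_2 V_2$. Coupled with the mean-zero constraint $V_1+V_2+V_3+V_4=0$ inherited from $\mathcal{Z}_{p,0}$, the four relations in the $*=\circ$ case force $\bV$ to be proportional to $\mathbf{E}_\circ$ and hence to vanish, since the components of $\mathbf{E}_\circ$ are positive; thus $\ker\mathcal{L}_{\circ,\varrho}=\{0\}$. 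For $*=b$, where $\mathbf{E}_{b,\varrho,2}=0$, the relation $V_1=k_2V_2$ is absent; solving the remaining three linear equations under the mean-zero constraint produces a one-dimensional family whose generator is precisely $(k_4(1+k_3),-k_3(k_1+k_4),-(k_1+k_4),k_1(1+k_3))$. That this constant vector lies in $\ker\mathcal{L}_{b,\varrho}$ is then immediate by direct substitution.

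Finally, the spectral gap is an immediate consequence of discreteness and the kernel analysis: since $\sigma(\mathcal{L}_{\circ,\varrho})\subset(-\infty,0]$ is discrete and $0$ lies in the resolvent set, the quantity $\lambda_{\circ,\varrho}:=-\sup\sigma(\mathcal{L}_{\circ,\varrho})$ is strictly positive; similarly, $0$ is an isolated eigenvalue of $\mathcal{L}_{b,\varrho}$, so $\lambda_{b,\varrho}:=-\sup(\sigma(\mathcal{L}_{b,\varrho})\setminus\{0\})>0$ is well-defined. The main delicate point of the proof is the kernel computation for $\mathcal{L}_{b,\varrho}$: the vanishing of $\mathbf{E}_{b,\varrho,2}$ deactivates one of the dissipation terms provided by Lemma~\ref{L2}, so the kernel is genuinely nontrivial, and the correct generator must be extracted by carefully resolving the remaining linear system under the $\mathcal{Z}_{p,0}$ constraint.
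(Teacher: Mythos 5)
Your proposal is correct and follows essentially the same route as the paper: compact resolvent via the compact embedding of the domain, Lemma~\ref{L2} applied to an eigenpair to confine the spectrum to $(-\infty,0]$, the equality case of that lemma to compute the kernels, and discreteness to extract the spectral gaps. You are in fact slightly more explicit than the paper on two points — that the identity forces the eigenvalue to be real, and that the candidate generator of $\ker\mathcal{L}_{b,\varrho}$ must be checked to lie in the kernel by direct substitution — both of which are welcome.
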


\begin{proof}
Let $*\in\{b,\circ\}$. Since $(1-\mathbb{P})(W_{p,N}^2(\Omega))$ embeds compactly in $\mathcal{Z}_{p,0}=(1-\mathbb{P})(L_p(\Omega))$, the operator~$\mathcal{L}_{*,\varrho}$ has compact resolvent. Therefore, its spectrum $\sigma\big( \mathcal{L}_{*,\varrho} \big)$ is discrete and contains only eigenvalues. Consider then $\lambda\in \sigma\big( \mathcal{L}_{*,\varrho} \big)$ and a corresponding eigenvector $\bv\ne\mathbf{0}$. As $\mathcal{L}_{*,\varrho}[\bv] = \lambda\bv$, we infer from \Cref{L2} that
\begin{equation}
	0\ge \int_\Omega \mathcal{L}_{*,\varrho}[\bv] \overline{\bV(\bv)}\,\rd x = \lambda \left( \|v_1\|_2^2 + k_2 \|v_2\|_2^2 + k_2k_3 \|v_3\|_2^2 + \frac{k_4}{k_1} \|v_4\|_2^2 \right) \,. \label{ll04}
\end{equation}
Thus $\lambda\in (-\infty,0]$. 

Assume now that $0\in \sigma\big( \mathcal{L}_{*,\varrho} \big)$ and consider $\bv\in \ker\mathcal{L}_{*,\varrho}$. We then infer from~\eqref{ll03} and~\eqref{ll04} that
\begin{equation*}
    0 = \sum_{i=1}^4 D_i \|\nabla v_i\|_2^2 + \mathbf{E}_{*,\varrho,2}^2 \|v_1-k_2 v_2\|_2^2 + k_2 \|v_2-k_3v_3\|_2^2 + \frac{1}{k_1} \|k_1 v_1 - k_4 v_4\|_2^2\,,
\end{equation*}
from which we deduce that $\bv\in\mathbb{R}^4$ with
\begin{equation}
  \mathbf{E}_{*,\varrho,2} (v_1-k_2 v_2) = v_2-k_3v_3  = k_1 v_1 - k_4 v_4 = 0\,. \label{ll05}
\end{equation}

\noindent$\bullet$ If $*=\circ$ then, since $E_{\circ,\varrho,2}\ne 0$, an immediate consequence of~\eqref{ll05} is that
\begin{equation*}
    v_2 = \frac{v_1}{k_2}\,, \quad v_3 = \frac{v_1}{k_2k_3}\,, \quad v_4 = \frac{k_1 v_1}{k_4}\,.
\end{equation*}
Recalling that $\bv\in \mathcal{Z}_{p,0}$, we obtain
\begin{equation*}
    0 = |\Omega| \left( 1 + \frac{1}{k_2} + \frac{1}{k_2 k_3} + \frac{k_1}{k_4} \right) v_1\,,
\end{equation*}
whence $v_1=0$ and $\bv=\mathbf{0}$; that is, $0\not\in \sigma\big(\mathcal{L}_{\circ,\varrho}\big)$. Since the spectrum is discrete, there is $\lambda_{\circ,\varrho}>0$ such that 
$\sigma\big( \mathcal{L}_{\circ,\varrho} \big) \subset (-\infty,-\lambda_{\circ,\varrho}]$.

\noindent$\bullet$ If $*=b$, then $E_{b,\varrho,2}= 0$ and we only deduce from~\eqref{ll05} that $v_3 = v_2/k_3$ and $v_4 = k_1 v_1/k_4$. Since $\bv\in \mathcal{Z}_{p,0}$, we additionally obtain that
\begin{equation*}
    0 = |\Omega| \left( v_1 + v_2 + \frac{v_2}{k_3} + \frac{k_1 v_1}{k_4} \right)\,,
\end{equation*}
hence $(k_1+k_4) v_1/k_4 = -(1+k_3)v_2/k_3$. Consequently,
\begin{equation*}
    \bv = v_1 \left( 1 , - \frac{k_3(k_1+k_4)}{k_4(1+k_3)} , - \frac{k_1+k_4}{k_4(1+k_3)} , \frac{k_1}{k_4} \right)\,,
\end{equation*}
and we thus have identified $\ker\mathcal{L}_{b,\varrho}$. Since the spectrum is discrete, there is $\lambda_{b,\varrho}>0$ such that $\sigma\big( \mathcal{L}_{b,\varrho} \big)\setminus\{0\} \subset (-\infty,-\lambda_{b,\varrho}]$.
\end{proof}

After this preparation, we are ready to study more precisely the local behavior of the semiflow near the steady states and begin with the interior steady state, postponing the analysis near the boundary steady state to the next subsection. The principle of linearized stability now implies the exponential asymptotic stability of $\mathbf{E}_{\circ,\varrho}$  in $\mathcal{Z}_{p,\varrho}^+$.

\begin{cor}\label{C2}
Let $p\in (n,\infty)$, $p\ge 2$, and $\varrho>0$. Then $\mathbf{E}_{\circ,\varrho}$ is exponentially asymptotically stable in $\mathcal{Z}_{p,\varrho}^+$. That is, given $\omega\in (0,\lambda_{\circ,\varrho})$ there are $r>0$ and $M\ge 1$ such that
\begin{equation*}
    \|\bu(t;\bu^0)-\mathbf{E}_{\circ,\varrho}\|_p\le M\|\bu^0-\mathbf{E}_{\circ,\varrho}\|_p e^{-\omega t}\,,\quad t\ge 0\,,
\end{equation*}
whenever $\bu^0\in \mathcal{Z}_{p,\varrho}^+$ satisfies $\|\bu^0-\mathbf{E}_{\circ,\varrho}\|_p\le r$.
\end{cor}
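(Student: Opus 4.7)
The plan is to invoke the classical principle of linearized stability for the reduced Cauchy problem~\eqref{CP2} at the equilibrium $\mathbf{v}_{\circ,\varrho}:=(1-\mathbb{P})\mathbf{E}_{\circ,\varrho}$, and then transfer the result back to the original variable~$\bu$. The key preliminary observation is that, since $\bu^0\in\mathcal{Z}_{p,\varrho}^+$ and $\mathbf{E}_{\circ,\varrho}\in\mathcal{Z}_{p,\varrho}^+$ both have total mass~$\varrho$, one has $\mathbb{P}\bu^0=\frac{\varrho}{4|\Omega|}\mathbf{1}=\mathbb{P}\mathbf{E}_{\circ,\varrho}$. Hence the nonlinearity $g$ in~\eqref{CP2} is a fixed polynomial mapping (depending only on $\varrho$), and the identity $\bu(t)-\mathbf{E}_{\circ,\varrho}=\bv(t)-\mathbf{v}_{\circ,\varrho}$ holds for all $t\ge 0$. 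This reduces the statement to proving exponential stability of $\mathbf{v}_{\circ,\varrho}$ for~\eqref{CP2} in the $\|\cdot\|_p$-norm.

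Next I would verify the two hypotheses of the linearized stability principle. On the spectral side, \Cref{P3} provides that $\mathcal{L}_{\circ,\varrho}$ generates an analytic semigroup on $\mathcal{Z}_{p,0}$ whose spectrum lies in $(-\infty,-\lambda_{\circ,\varrho}]$, so for each $\omega\in(0,\lambda_{\circ,\varrho})$ there exists $M_0\ge 1$ with
\begin{equation*}
\|e^{t\mathcal{L}_{\circ,\varrho}}\|_{\mathcal{L}(\mathcal{Z}_{p,0})}\le M_0 e^{-\omega t}\,,\qquad t\ge 0\,,
\end{equation*}
together with the analytic-semigroup smoothing $\|e^{t\mathcal{L}_{\circ,\varrho}}\|_{\mathcal{L}(\mathcal{Z}_{p,0},E_\xi)}\le C t^{-\xi}e^{-\omega t}$ into the intermediate space $E_\xi=(1-\mathbb{P})W_p^{2\xi}(\Omega)$ for $\xi\in(n/(2p),1/2)$. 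On the nonlinear side, $g$ is polynomial, so its Taylor remainder satisfies the quadratic bound
\begin{equation*}
\|g(\bv)-g(\mathbf{v}_{\circ,\varrho})-Dg(\mathbf{v}_{\circ,\varrho})(\bv-\mathbf{v}_{\circ,\varrho})\|_p\le c\,\|\bv-\mathbf{v}_{\circ,\varrho}\|_{E_\xi}^2
\end{equation*}
on bounded subsets, thanks to the continuous embedding $E_\xi\hookrightarrow L_\infty(\Omega)$ recorded in~\eqref{xi}.

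With these two ingredients in place, I would run the standard Banach fixed-point argument on the variation-of-constants formula
\begin{equation*}
\bv(t)-\mathbf{v}_{\circ,\varrho}=e^{t\mathcal{L}_{\circ,\varrho}}(\bv^0-\mathbf{v}_{\circ,\varrho})+\int_0^t e^{(t-s)\mathcal{L}_{\circ,\varrho}}\big[g(\bv(s))-g(\mathbf{v}_{\circ,\varrho})-Dg(\mathbf{v}_{\circ,\varrho})(\bv(s)-\mathbf{v}_{\circ,\varrho})\big]\,\rd s
\end{equation*}
in an exponentially weighted space of $\mathcal{Z}_{p,0}$-valued continuous functions, making use of the integrability of $t^{-\xi}$ at the origin (since $\xi<1/2<1$) to absorb the $L_p$-to-$E_\xi$ mismatch between the linear smoothing estimate and the quadratic nature of the remainder. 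This is precisely the content of the classical linearized stability theorem for fully nonlinear parabolic problems in the analytic semigroup framework. The contraction closes once $\|\bv^0-\mathbf{v}_{\circ,\varrho}\|_p\le r$ for a sufficiently small $r>0$, yielding the claimed exponential decay of $\bv(t)-\mathbf{v}_{\circ,\varrho}$ in $L_p(\Omega)$ at rate $\omega$. Transferring via $\bu-\mathbf{E}_{\circ,\varrho}=\bv-\mathbf{v}_{\circ,\varrho}$ completes the proof, positivity and global existence being automatic from \Cref{T1}. The main subtle point is exactly this interplay between the $L_p$-norm in the statement and the higher-regularity $E_\xi$-norm required to control the quadratic remainder; beyond that, the argument is routine.
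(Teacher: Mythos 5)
Your proposal is correct and follows essentially the same route as the paper: reduce to the Cauchy problem~\eqref{CP2}, use the spectral bound $\sigma(\mathcal{L}_{\circ,\varrho})\subset(-\infty,-\lambda_{\circ,\varrho}]$ from \Cref{P3}, and invoke the principle of linearized stability. The only difference is that the paper cites this principle as a black box (\cite[Theorem~1.6]{MW_MN}, with the condition $2\xi-1<0$ playing exactly the role you identify), whereas you unpack the standard variation-of-constants contraction argument behind it.
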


\begin{proof}
The statement readily follows from Proposition~\ref{P3} and the principle of linearized stability stated in \cite[Theorem~1.6]{MW_MN} (with $\gamma=\gamma_*=\alpha=0<\xi<1/2$, $q=2$, and $\alpha_{crit}=\alpha_{crit}^*=2\xi-1<0$ therein) applied to~\eqref{CP2}.
\end{proof}

\subsection{Boundary Steady State}\label{S3.3}

According to Proposition~\ref{P3}, the principle of linearized stability does not apply to investigate the stability properties of the boundary steady state~$\mathbf{E}_{b,\varrho}$, since the corresponding linearization $\mathcal{L}_{b,\varrho}$ has eigenvalue zero. In fact, $\mathbf{E}_{b,\varrho}$ is a non-hyperbolic fixed point of the semiflow and the appropriate tool to study the local behavior of the semiflow in the vicinity of such a point is the center manifold theory \cite{Carr81}. As a first step towards the construction of a (local) center manifold, let us recast~\eqref{CP2} in a suitable way. Introducing
\begin{equation*}
    \mathbf{w}:=\mathbf{v}-(1-\mathbb{P})\mathbf{E}_{b,\varrho}=(1-\mathbb{P})(\bu-\mathbf{E}_{b,\varrho})\,,
\end{equation*}
we infer from~\eqref{CP2} that
\begin{equation}\label{CP2w}
\begin{split}
	& \frac{\rd \mathbf{w}}{\rd t} = \mathcal{L}_{b,\varrho}[\mathbf{w}]+ N(\mathbf{w}) \,,\quad t>0\,, \\
    & \mathbf{w}(0) = \mathbf{w}^0 := (1-\mathbb{P})(\bu^0-\mathbf{E}_{b,\varrho}) \,,
\end{split}
\end{equation}
in $\mathcal{Z}_{p,0}$, where
\begin{equation*}
    \mathcal{L}_{b,\varrho}[\mathbf{w}]=\mathcal{A}\mathbf{w} +DF(\mathbf{E}_{b,\varrho})[\mathbf{w}]
\end{equation*}
and
\begin{equation*}
    N(\mathbf{w}):=F\big(\mathbf{E}_{b,\varrho}+\mathbf{w}\big)-DF(\mathbf{E}_{b,\varrho})[\mathbf{w}]\,.
\end{equation*}
We recall that $\mathcal{L}_{b,\varrho}$ with domain $(1-\mathbb{P})W_{p,N}^2(\Omega)$ generates an analytic semigroup on $\mathcal{Z}_{p,0}$ and has the one-dimensional kernel $\ker \mathcal{L}_{b,\varrho} = \mathbb{R} \mathbf{k}$
with
\begin{equation}\label{k}
    \mathbf{k}:=\big( k_4(1+k_3), - k_3(k_1+k_4), -(k_1+k_4), k_1(1+k_3) \big)
\end{equation}
by Proposition~\ref{P3}. Moreover,
\begin{equation*}
    N(\mathbf{0})= \mathbf{0}\,,\qquad DN(\mathbf{0})= \mathbf{0}\,,
\end{equation*}
and, in fact,
\begin{equation}\label{N}
	N(\mathbf{w}) = w_2^2\, \big(
		k_2 w_2- \varrho K_1 k_4-w_1\,,\, 
			-k_2 w_2+\varrho K_1 k_4+w_1\,,\, 0\,,\,0\big)
\end{equation}
with $K_1:=1/[\vert\Omega\vert (k_1+k_4)]$. To construct a splitting of the space $\mathcal{Z}_{p,0}$ into two $\mathcal{L}_{b,\varrho}$-invariant subspaces according to the zero eigenvalue we introduce
\begin{equation}\label{q}
q(\mathbf{w}):=K_2 \int_\Omega\big(w_1+w_4-w_2-w_3\big)\,\rd x\,,\quad \mathbf{w}\in \mathcal{Z}_{p,0}\,,
\end{equation}
with
\begin{equation*}
    K_2 :=\frac{1}{2\vert\Omega\vert (k_1+k_4)(1+k_3)}
\end{equation*}
and note that $q(\mathbf{k})=1$ and $q(\mathcal{L}_{b,\varrho}[\mathbf{w}])=0$ (see \eqref{ll01} and \eqref{eqr}). Therefore,
\begin{equation*}
    Q\mathbf{w}:= q(\mathbf{w})\mathbf{k}\,,\quad \mathbf{w}\in \mathcal{Z}_{p,0}\,,
\end{equation*}
defines a projection $Q=Q^2\in\mathcal{L}(\mathcal{Z}_{p,0})$ with $X:=\mathrm{rg}Q= \mathbb{R} \mathbf{k}$ and $Q\mathcal{L}_{b,\varrho}=\mathcal{L}_{b,\varrho}Q=0$. Setting $Y:=\mathrm{rg}(1-Q)$ we thus obtain the decomposition
\begin{equation}
    \mathcal{Z}_{p,0}= \mathbb{R} \mathbf{k}\oplus Y=X\oplus Y \label{dec}
\end{equation}
with
$\mathcal{L}_{b,\varrho}\vert_X=0$ and $\mathcal{B}:=\mathcal{L}_{b,\varrho}(1-Q)=\mathcal{L}_{b,\varrho}\vert_Y$ generates an analytic semigroup on $Y$. Moreover, its spectrum is included in $(-\infty,-\lambda_{b,\varrho}]$ according to \Cref{P3}. Consequently, assumptions (i)-(iii) of \cite[Section~6.3]{Carr81} hold. 

Using the decomposition $\mathbf{w}=s\mathbf{k}+\mathbf{y}$ with $s\in \R$ and $\mathbf{y}\in Y$, we can write~\eqref{CP2w} equivalently in the form
\begin{subequations}
\label{deco}
\begin{align} 
\frac{\rd s}{\rd t} &= q\left( N\big(s\mathbf{k}+\mathbf{y}\big) \right)\,,\\
\frac{\rd \mathbf{y}}{\rd t}&=\mathcal{B}[\mathbf{y}]+ (1-Q)N\big(s\mathbf{k}+\mathbf{y}\big)\,,\\
(s,\mathbf{y})(0) & = \big(s^0,\mathbf{y}^0\big)\in \mathbb{R}\times Y\,,
\end{align}
\end{subequations}
(where $\mathbf{w}^0 = s^0 \mathbf{k} + \mathbf{y}^0$). It readily follows from \Cref{T1} that~\eqref{deco} is well-posed in $\mathbb{R}\times Y$ and that, owing to the identity
\begin{equation*}
(1-\mathbb{P})\big(\mathbf{u}\big(t;\mathbf{u}^0\big) - \mathbf{E}_{b,\varrho}\big)=\mathbf{u}\big(t;\mathbf{u}^0\big) - \mathbf{E}_{b,\varrho}\,,\qquad \mathbf{u}^0\in \mathcal{Z}_{p,\varrho}\,,
\end{equation*}
the mapping 
\begin{equation}
    \big(t,s^0,\mathbf{y}^0\big)\mapsto \mathbf{w}\big(t;s^0,\mathbf{y}^0\big) := \mathbf{u}\big(t;\mathbf{E}_{b,\varrho} + s^0 \mathbf{k} + \mathbf{y}^0\big) - \mathbf{E}_{b,\varrho} \label{wflow}
\end{equation}
defines a global semiflow on $\mathbb{R}\times Y$. We may now state the main result of this section.

\begin{prop}\label{P5}
There exists a $C^2$-smooth (local) center manifold $\mathcal{W}_c(\mathbf{E_{b,\varrho}})$ for~\eqref{deco}; that is, $\mathcal{W}_c(\mathbf{E_{b,\varrho}})$ is positively invariant for the semiflow $\mathbf{w}\big(\cdot;s^0,\mathbf{y}^0\big)$, and there are $\delta>0$ and $\mathbf{h}\in C^2\big((-\delta,\delta),Y\big)$ satisfying $\mathbf{h}(0)=\mathbf{0}$ and $\mathbf{h}'(0)=\mathbf{0}$ such that
\begin{equation*}
    \mathcal{W}_c(\mathbf{E_{b,\varrho}}) = \big\{ \xi\mathbf{k} + \mathbf{h}(\xi)\ :\ \xi\in (-\delta,\delta) \big\}\,.
\end{equation*} 
Moreover, there are $\delta_0\in (0,\delta)$ and $K_4>0$ such that
\begin{equation}
	-3K_4 \xi^2 \le q\big(N\big(\xi\mathbf{k}+\mathbf{h}(\xi)\big)\big) \le -K_4 \xi^2\,, \qquad \xi\in (-\delta_0,\delta_0)\,. \label{CM04}
\end{equation} 
\end{prop}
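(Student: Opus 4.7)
The plan is to obtain the center manifold by invoking the center manifold theorem from \cite[Section~6.3]{Carr81} and to establish \eqref{CM04} by a direct Taylor expansion using the explicit formula \eqref{N} for $N$. All of the structural hypotheses of the theorem are already in place: the splitting \eqref{dec} is complementary, $\mathcal{L}_{b,\varrho}\vert_X = 0$, and $\mathcal{B}$ generates an analytic semigroup on $Y$ whose spectrum lies in $(-\infty, -\lambda_{b,\varrho}]$ by \Cref{P3}. The required $C^2$-smoothness of $N$ (in fact, it is polynomial) follows from the expression \eqref{N} together with the embedding $W_p^{2\xi}(\Omega) \hookrightarrow L_\infty(\Omega)$ for $\xi \in (n/(2p), 1/2)$, precisely as in the proof of \Cref{P1}. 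The theorem then yields $\delta > 0$ and $\mathbf{h} \in C^2((-\delta,\delta), Y)$ with $\mathbf{h}(0) = \mathbf{0}$ and $\mathbf{h}'(0) = \mathbf{0}$ whose graph is a positively invariant local center manifold for the semiflow \eqref{wflow}.

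For the quantitative bound \eqref{CM04}, I would denote by $W_i(\xi)$ the $i$-th component of $\xi\mathbf{k} + \mathbf{h}(\xi)$. The explicit form \eqref{N} combined with the definition \eqref{q} of $q$ yields, after the second and first components cancel symmetrically,
\begin{equation*}
q\bigl( N(\xi\mathbf{k} + \mathbf{h}(\xi)) \bigr) = 2 K_2 \int_\Omega W_2(\xi)^2 \bigl( k_2 W_2(\xi) - \varrho K_1 k_4 - W_1(\xi) \bigr) \,\rd x\,.
\end{equation*}
Since $\mathbf{h}(0) = \mathbf{0}$ and $\mathbf{h}'(0) = \mathbf{0}$ with $\mathbf{h}$ of class $C^2$ with values in a space embedded into $L_\infty(\Omega)$, Taylor's theorem provides the uniform-in-$x$ expansions $W_1(\xi) = \xi k_4(1+k_3) + O(\xi^2)$ and $W_2(\xi) = -\xi k_3(k_1+k_4) + O(\xi^2)$ as $\xi \to 0$, the leading terms being read off from \eqref{k}. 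The leading contribution of the integrand as $\xi \to 0$ therefore comes from pairing $W_2(\xi)^2 = \xi^2 k_3^2 (k_1+k_4)^2 + O(\xi^3)$ with the $\xi$-independent factor $-\varrho K_1 k_4$, and a short computation using the explicit values of $K_1$ and $K_2$ gives
\begin{equation*}
q\bigl( N(\xi\mathbf{k} + \mathbf{h}(\xi)) \bigr) = -A \xi^2 + O(\xi^3)\,, \qquad A := \frac{\varrho k_3^2 k_4}{(1+k_3)\vert\Omega\vert} > 0\,.
\end{equation*}

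Setting $K_4 := A/2$ and choosing $\delta_0 \in (0,\delta)$ small enough that the cubic remainder is dominated by $(A/2)\xi^2$ on $(-\delta_0, \delta_0)$ delivers both inequalities of \eqref{CM04}. The only step I expect to require genuine care is the transfer of the $O(\xi^2)$ bound on $\mathbf{h}$ to the $L_\infty$-type estimates needed to justify the pointwise expansion of the cubic integrand; this is secured by the parabolic smoothing which places $\mathcal{W}_c(\mathbf{E}_{b,\varrho})$ inside $(1-\mathbb{P}) W_{p,N}^2(\Omega) \hookrightarrow L_\infty(\Omega)$, thanks to $p > n$.
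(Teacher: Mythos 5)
Your strategy coincides with the paper's: invoke \cite[Section~6.3, 6.Theorem~8]{Carr81} for the existence of the $C^2$ center manifold (the hypotheses having been checked before the statement), then expand $q\big(N(\xi\mathbf{k}+\mathbf{h}(\xi))\big)=2K_2\int_\Omega W_2^2\,(k_2W_2-\varrho K_1k_4-W_1)\,\rd x$ and isolate the quadratic leading term. Your leading coefficient is correct and agrees with the paper's: $A=2\varrho K_1K_2k_3^2k_4(k_1+k_4)^2|\Omega| = \varrho k_3^2k_4/[(1+k_3)|\Omega|]$, and both you and the paper take $K_4=A/2$.

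The one step where your justification does not hold up as written is the remainder estimate. You claim a \emph{uniform-in-$x$} Taylor expansion of $W_1,W_2$, arguing that $\mathbf{h}$ takes values in a space embedded in $L_\infty(\Omega)$ by parabolic smoothing. But the center manifold theorem, as used here, delivers $\mathbf{h}\in C^2((-\delta,\delta),Y)$ with $Y\subset\mathcal{Z}_{p,0}\subset L_p(\Omega)$; the conditions $\mathbf{h}(0)=\mathbf{0}$, $\mathbf{h}'(0)=\mathbf{0}$ therefore only give $\|\mathbf{h}(\xi)\|_p\le H\xi^2$, not an $L_\infty$ bound, and smoothing along trajectories does not by itself upgrade the $C^2$-dependence on $\xi$ (in particular the quadratic smallness of $\mathbf{h}$) to the $L_\infty$ topology. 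The paper avoids this issue entirely: since the integrand is cubic in $(W_1,W_2)$, H\"older's inequality with exponent $3$ bounds the error terms by products of $\|h_i(\xi)\|_1$, $\|h_2(\xi)\|_2^2$ and $\|\cdot\|_3$-norms, all of which are controlled by $\|\mathbf{h}(\xi)\|_p\le H\xi^2$ precisely because $p\ge 3$ — this is where the hypothesis $p\ge 3$ in \Cref{T2} enters. Replacing your pointwise expansion by this $L_3$/H\"older argument closes the gap without changing anything else in your proof.
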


As we shall see below, the local behavior~\eqref{CM04} of $\xi\mapsto q\big(N\big(\xi\mathbf{k}+\mathbf{h}(\xi)\big)\big)$ is the key tool to shed some light on the dynamics of the semiflow $\mathbf{w}\big(\cdot;s^0,\mathbf{y}^0\big)$ on the center manifold $\mathcal{W}_c(\mathbf{E_{b,\varrho}})$.

\begin{proof}
As already mentioned, assumptions (i)-(iii) of \cite[Section~6.3]{Carr81} are satisfied and the existence of a $C^2$-smooth (local) center manifold $\mathcal{W}_c(\mathbf{E_{b,\varrho}})$ for~\eqref{deco} readily follows from \cite[6.Theorem~8]{Carr81}.

We next turn to a detailed study of the behavior of $\xi\mapsto q\big(N\big(\xi\mathbf{k}+\mathbf{h}(\xi)\big)\big)$ as $\xi\to 0$ and prove~\eqref{CM04}. We first observe that the properties of $\mathbf{h}$ guarantee that there is $H>0$ such that
\begin{equation}
	\|\mathbf{h}(\xi)\|_p \le H \xi^2\,, \qquad \xi\in (-\delta,\delta)\,. \label{CM03}
\end{equation}
Next, from~\eqref{eqr}, \eqref{N} and~\eqref{q} we obtain that, for $\xi\in (-\delta,\delta)$,
\begin{align*}
	& q\big(N\big(\xi\mathbf{k}+\mathbf{h}(\xi)\big)\big) \\
	& \quad = -2K_2 \int_\Omega \big[- k_3(k_1+k_4) \xi + h_2(\xi)\big]^2 \big[ \varrho K_1 k_4 + K_0 \xi + h_1(\xi) - \xi h_2(\xi) \big]\,\rd x \\
	& \quad = - 2 \varrho K_1 K_2 k_4 \int_\Omega \big[- k_3(k_1+k_4) \xi + h_2(\xi)\big]^2\,\rd x \\
	& \quad\quad - 2K_2 \int_\Omega \big[- k_3(k_1+k_4) \xi + h_2(\xi)\big]^2 \big[ K_0 \xi + h_1(\xi) - \xi h_2(\xi) \big]\,\rd x \\
	& \quad = - 2 \varrho K_1 K_2 k_3^2 k_4 (k_1+k_4)^2 |\Omega| \xi^2 \\
	& \quad\quad + 4 \varrho K_1 K_2 k_3 k_4 (k_1+k_4) \int_\Omega \xi h_2(\xi)\, \rd x - 2 \varrho K_1 K_2 k_4 \int_\Omega |h_2(\xi)|^2\, \rd x \\
	& \quad\quad - 2K_2 \int_\Omega \big[- k_3(k_1+k_4) \xi + h_2(\xi)\big]^2 \big[ K_0 \xi + h_1(\xi) - \xi h_2(\xi) \big]\,\rd x\,.
\end{align*}
Setting $K_4 := \varrho K_1 K_2 k_3^2 k_4 (k_1+k_4)^2 |\Omega|>0$ and recalling that $p\ge 3$, we further infer from~\eqref{CM03} and H\"older's inequality that
\begin{align*}
	& \left| q\big(N\big(\xi\mathbf{k}+\mathbf{h}(\xi)\big)\big) + 2 K_4 \xi^2 \right| \\
	& \qquad \le 4 \varrho K_1 K_2 k_3 k_4 (k_1+k_4) |\xi| \|h_2(\xi)\|_1 + 2 \varrho K_1 K_2 k_4 \|h_2(\xi)\|_2^2\\
	& \qquad\quad + 2K_2 \|k_3(k_1+k_4) \xi - h_2(\xi)\|_3^2 \| K_0 \xi + h_1(\xi) - \xi h_2(\xi)\|_3 \\
	& \qquad \le K_5 |\xi|^3\,,
\end{align*}
for some constant $K_5>0$ depending only on $(k_i)_{1\le i\le 4}$, $|\Omega|$, $\varrho$, $p$ and $H$. In particular, there is $\delta_0\in (0,\delta)$ such that~\eqref{CM04} holds true.
\end{proof}

A first consequence of \Cref{P5} is the convergence of $\mathbf{w}\big(t;s^0,\mathbf{h}(s^0)\big)$ to zero when $s^0\in (0,\delta_0)$. However, this result is irrelevant for the identification of the long-term limit of non-negative solutions to~\eqref{U} as the corresponding initial values $\bu^0 = \mathbf{E}_{b,\varrho} + s^0 \mathbf{k} + \mathbf{h}(s^0)$ have non-positive second and third components for sufficiently small $s^0>0$ (see~\eqref{eqr} and~\eqref{k}). 

\begin{cor}\label{C3}
If $s^0\in (0,\delta_0)$, then 
\begin{equation}
    \mathbf{w}\big([0,\infty);s^0,\mathbf{h}(s^0)\big)\subset \mathcal{W}_c(\mathbf{E_{b,\varrho}}) \;\;\text{ and }\;\; \lim_{t\to\infty} \big\|\mathbf{w}\big(t;s^0,\mathbf{h}(s^0)\big)\big\|_{\mathbb{R}\times Y} = 0\,.\label{cvbss}
\end{equation} 
\end{cor}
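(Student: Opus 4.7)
The plan is to exploit the positive invariance of $\mathcal{W}_c(\mathbf{E}_{b,\varrho})$ recorded in \Cref{P5} to reduce the dynamics to a scalar ordinary differential equation, and then to read off both the non-escape from the parametrization and the algebraic decay directly from the two-sided bound~\eqref{CM04}.

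First, I would invoke positive invariance: since $\xi\mathbf{k}+\mathbf{h}(\xi)$ is a $C^2$ parametrization of $\mathcal{W}_c(\mathbf{E}_{b,\varrho})$ over $(-\delta,\delta)$ and the semiflow preserves the manifold, there is a scalar function $\xi\in C^1\big([0,t_*),(-\delta,\delta)\big)$ with $\xi(0)=s^0$ such that
\begin{equation*}
    \mathbf{w}(t;s^0,\mathbf{h}(s^0)) = \xi(t)\mathbf{k}+\mathbf{h}(\xi(t))
\end{equation*}
on $[0,t_*)$. Projecting the first equation of~\eqref{deco} via $q$ (and noting $q(\mathbf{h}(\xi))=0$ since $\mathbf{h}(\xi)\in Y=\ker q$), I obtain the reduced ODE
\begin{equation*}
    \xi'(t) = q\bigl(N(\xi(t)\mathbf{k}+\mathbf{h}(\xi(t)))\bigr)\,,\qquad \xi(0)=s^0\in (0,\delta_0)\,.
\end{equation*}

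Second, I would use~\eqref{CM04} to show $\xi$ stays in $(0,s^0]$ as long as it is defined. The upper bound in~\eqref{CM04} gives $\xi'(t)\le -K_4\xi(t)^2\le 0$ whenever $\xi(t)\in (0,\delta_0)$, so $\xi$ is non-increasing and bounded above by $s^0<\delta_0$. Moreover, $\xi\equiv 0$ is the trivial solution of the reduced ODE (because $N(\mathbf{0})=\mathbf{0}$ and $\mathbf{h}(0)=\mathbf{0}$), and the right-hand side is locally Lipschitz on $(-\delta,\delta)$ by the $C^2$ regularity of $\mathbf{h}$, so uniqueness prevents $\xi$ from hitting zero in finite time. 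Hence $\xi(t)\in (0,s^0]\subset (0,\delta_0)$ on $[0,t_*)$, which keeps us inside the validity range of~\eqref{CM04} and, combined with the global semiflow of \Cref{T1}, lets me extend $\xi$ to $[0,\infty)$. This proves the first assertion of~\eqref{cvbss}.

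Third, for the decay I would compare $\xi$ with the explicit solution of $\eta'=-K_4\eta^2$, $\eta(0)=s^0$, which yields
\begin{equation*}
    0<\xi(t)\le \frac{s^0}{1+K_4 s^0 t}\,,\qquad t\ge 0\,,
\end{equation*}
so $\xi(t)\to 0$ as $t\to\infty$. Continuity of $\mathbf{h}$ together with $\mathbf{h}(0)=\mathbf{0}$ then gives $\|\mathbf{h}(\xi(t))\|_Y\to 0$, and combining both convergences in the norm on $\mathbb{R}\times Y$ closes~\eqref{cvbss}. I do not anticipate any real obstacle here: essentially all the work is hidden in~\eqref{CM04}, and the only non-routine point is the uniqueness argument preventing $\xi$ from leaving $(0,\delta_0)$, which is immediate from the Lipschitz character of the reduced vector field at the stationary point $\xi=0$.
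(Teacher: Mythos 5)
Your proof is correct and follows essentially the same route as the paper: reduce to the scalar ODE $\xi' = q\big(N(\xi\mathbf{k}+\mathbf{h}(\xi))\big)$ on the center manifold via positive invariance, keep $\xi$ trapped in $(0,\delta_0)$, and integrate the upper bound in~\eqref{CM04} to get the algebraic decay $\xi(t)\le s^0/(1+K_4 s^0 t)$. The only (minor) difference is how positivity of $\xi$ is secured: you invoke local Lipschitz continuity and ODE uniqueness at the equilibrium $\xi=0$, whereas the paper uses the lower bound in~\eqref{CM04} to obtain the explicit estimate $s(t)\ge s^0/(1+3K_4 s^0 t)>0$; both arguments are valid.
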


\begin{proof}
 We consider $s^0\in (0,\delta_0)$ and set $\mathbf{w}(t) := \mathbf{w}\big(t;s^0,\mathbf{h}(s^0)\big)$ for $t\ge 0$. Owing to the positive invariance of $\mathcal{W}_c(\mathbf{E_{b,\varrho}})$ for the semiflow, there is $\tau\in (0,\infty]$ such that $\mathbf{w}(t)\in \mathcal{W}_c(\mathbf{E_{b,\varrho}})$ for all $t\in [0,\tau)$. Consequently, there is $s\in C^1([0,\tau))$ such that
\begin{equation}
	s(t)\in (-\delta,\delta) \;\;\text{ and }\;\; \mathbf{w}(t) = s(t)\mathbf{k} + \mathbf{h}(s(t))\,, \qquad t\in [0,\tau)\,. \label{CM01}
\end{equation}
Combining~\eqref{deco} and~\eqref{CM01} readily implies that $s$ solves
\begin{equation}
\frac{\rd s}{\rd t}= q\big(N\big(s\mathbf{k}+\mathbf{h}(s)\big)\big)\,, \quad t\in [0,\tau)\,, \qquad s(0)=s^0\,. \label{CM02}
\end{equation}
We next define 
\begin{equation*}
	\tau_0 := \inf\big\{ t\in [0,\tau)\ :\ s(t)\not\in (0,\delta_0) \big\}\in (0,\tau]\,, 
\end{equation*}
the positivity of $\tau_0$ being an immediate consequence of $s^0\in (0,\delta_0)$ and the continuity of $s$. Owing to~\eqref{CM02} and~\eqref{CM04}, 
\begin{equation*}
	- 3 K_4 s^2(t) \le \frac{\rd s}{\rd t}(t) \le - K_4 s^2(t)\,, \qquad t\in [0,\tau_0)\,,
\end{equation*}
from which we readily deduce that
\begin{equation*}
	0 < \frac{s^0}{1+3K_4 s^0 t} \le s(t) \le \frac{s^0}{1+K_4 s^0 t} < \delta_0\,, \qquad t\in [0,\tau_0)\,.
\end{equation*}
Therefore, $\tau_0=\tau=\infty$ and $s(t)\in [0,\delta_0)$ for all $t\ge 0$ with $s(t)\to 0$ as $t\to\infty$, the latter entailing that $\|\mathbf{h}(s(t))\|_Y\to 0$ as $t\to\infty$ as well. Recalling~\eqref{CM01}, we have shown that, for $s^0\in (0,\delta_0)$, $\mathbf{w}\big(\cdot;s^0,\mathbf{h}(s^0)\big)$ satisfies~\eqref{cvbss}. 
\end{proof}

Summarizing our findings from this section we obtain \Cref{T2}.

\begin{proof}[Proof of \Cref{T2}]
Let $\varrho>0$ and $\mathbf{u}^0\in\mathcal{Z}_{p,\varrho}^+$. In \Cref{C1a} we established~\eqref{ltb}, while the exponential asymptotic stability of $\mathbf{E}_{\circ,\varrho}$ is shown in \Cref{C2}.

Assume next that $\mathbf{E}_{*,\varrho}=\mathbf{E}_{b,\varrho}$ in~\eqref{ltb}. Thanks to the decomposition~\eqref{dec}, we may write $\bu^0 = \mathbf{E}_{b,\varrho} + s^0 \mathbf{k} + \mathbf{y}^0$ and $\bu\big(\cdot;\bu^0\big) = \mathbf{E}_{b,\varrho} + s\mathbf{k} + \mathbf{y}$. The convergence~\eqref{ltb} then reads
\begin{equation}
    \lim_{t\to\infty} \left( |s(t)| + \|\mathbf{y}(t)\|_p \right) = 0\ . \label{CM05}
\end{equation}
We also infer from \cite[2.~Lemma~1]{Carr81} (the proof in the infinite-dimensional case is exactly the same) that there are $t_0\ge 0$, $K_6>0$ and $\mu>0$ such that
\begin{equation}
    \|\mathbf{y}(t) - \mathbf{h}(s(t))\|_p \le K_6 e^{-\mu(t-t_0)} \|\mathbf{y}(t_0) - \mathbf{h}(s(t_0))\|_p\ , \qquad t\ge t_0\ . \label{CM06}
\end{equation}
Combining~\eqref{CM05} and~\eqref{CM06}, we conclude that
\begin{equation}
    \lim_{t\to\infty} \left( |s(t)| + \|\mathbf{h}(s(t))\|_p \right) = 0\ .\label{CM07}
\end{equation}
Consequently, there is $t_1\ge t_0$ such that $|s(t)|<\delta_0$ for $t\ge t_1$ and it follows from~\eqref{CM04} that 
\begin{equation}
    - 3 K_4 s^2(t) \le \frac{\rd s}{\rd t}(t) \le - K_4 s^2(t)\,, \qquad t\ge t_1\,. \label{CM08}
\end{equation}
Assume for contradiction that $s(t_1)\ne 0$. Then~\eqref{CM08} implies that
\begin{equation*}
    K_4(t-t_1) + \frac{1}{s(t_1)} \le \frac{1}{s(t)} \le 3K_4(t-t_1) + \frac{1}{s(t_1)}\ , \qquad t\ge t_1\ ,
\end{equation*}
from which we deduce that $s(t)>0$ for all $t\ge t_1$. Consequently, since ${\bf u}(t,{\bf u}^0)\in \mathcal{Z}_{p,\varrho}^+$, we obtain
\begin{equation}
    0 \le \int_\Omega (u_2(t)+u_3(t))\,\rd x = -(1+k_3)(k_1+k_4)|\Omega| s(t) +\int_\Omega (y_2(t)+y_3(t))\,\rd x \label{CM09}
\end{equation}
for $t\ge t_1$. Now, since $\mathbf{y}(t)\in Y$, we have $\mathbb{P}\mathbf{y}(t)=Q\mathbf{y}(t)=0$, from which we deduce that
\begin{equation}
   \int_\Omega (y_1(t)+y_4(t))\,\rd x = \int_\Omega (y_2(t)+y_3(t))\,\rd x = 0\ , \qquad t\ge t_1\ . \label{CM10}
\end{equation}
Combining~\eqref{CM09} and~\eqref{CM10} implies that $-s(t)\ge 0$ for $t\ge t_1$ and contradicts the already established positivity of $s$ on this interval. Therefore, $s(t)=0$ for $t\ge t_1$ and we use again~\eqref{CM09} and~\eqref{CM10} to conclude that
\begin{equation*}
  \int_\Omega (u_2(t)+u_3(t))\,\rd x = 0\ , \qquad t\ge t_1\ .  
\end{equation*}
Recalling that both $u_2$ and $u_3$ are non-negative, we deduce that \begin{equation}
    u_2(t)=u_3(t)\equiv 0\,, \qquad t\ge t_1\,. \label{CM11}
\end{equation}
Now, since $u_2$ is non-negative, the comparison principle and~\eqref{u3} ensure that
\begin{equation*}
    u_3(t) \ge e^{k_3(t-s)} e^{d_3(t-s)\Delta_N} u_3(s) \ge 0\,, \qquad t\ge s\ge 0\,.
\end{equation*}
Combining~\eqref{CM11} and the above inequality with $t=t_1$ implies that $u_3(s)=0$ for $s\in [0,t_1]$. Therefore, $u_3\equiv 0$ on $[0,\infty)\times\Omega$ and we infer from this property and~\eqref{u3} that $u_2\equiv 0$ on $[0,\infty)\times\Omega$. We have thus proved that $u_2^0=u_3^0=0$. The converse statement is a direct consequence of \Cref{R1}.
\end{proof}

\section{Convergence to the Classical Gray-Scott Model}\label{S4}

This section is devoted to the proof of \Cref{T3}. We thus assume that
\begin{equation*}
    k_2=k_3=k_4=d_4=\varepsilon
\end{equation*}
for some $\varepsilon\in (0,1)$, still with $(k_1,d_1,d_2,d_3)\in (0,\infty)^4$. We consider $(u_1^0,u_2^0,u_3^0,a)\in W_{p_0}^{1,+}(\Omega)$ for some $p_0\in (n,\infty)$, $p_0\ge 2$, and denote the solution to~\eqref{U} with initial value $\bu_\varepsilon^0:=(u_1^0,u_2^0,u_3^0,a/\varepsilon)$ by $\bu_\varepsilon$. Introducing 
\begin{equation*}
    \bv_\varepsilon:=\big( u_{1,\varepsilon},u_{2,\varepsilon},u_{3,\varepsilon}, \varepsilon u_{4,\varepsilon} \big)\,,
\end{equation*}
it follows from~\eqref{U} that $\bv_\varepsilon$ solves    
\begin{subequations}\label{Ve}
\begin{align}
\partial_t v_{1,\varepsilon} &= d_1\Delta v_{1,\varepsilon} - v_{1,\varepsilon} v_{2,\varepsilon}^2 + \varepsilon v_{2,\varepsilon}^3 -k_1 v_{1,\varepsilon} + v_{4,\varepsilon}\,, & (t,x)\in (0,\infty)\times\Omega\,,\label{v1e}\\
\partial_t v_{2,\varepsilon} &= d_2\Delta v_{2,\varepsilon} + v_{1,\varepsilon} v_{2,\varepsilon}^2 - \varepsilon v_{2,\varepsilon}^3 - v_{2,\varepsilon} + \varepsilon v_{3,\varepsilon}\,, & (t,x)\in (0,\infty)\times\Omega\,,\label{v2e}\\
\partial_t v_{3,\varepsilon} &= d_3\Delta v_{3,\varepsilon} + v_{2,\varepsilon} - \varepsilon v_{3,\varepsilon}\,, & (t,x)\in (0,\infty)\times\Omega\,,\label{v3e}\\
\partial_t v_{4,\varepsilon} &= \varepsilon \Delta v_{4,\varepsilon} + \varepsilon k_1 v_{1,\varepsilon} - \varepsilon v_{4,\varepsilon}\,, & (t,x)\in (0,\infty)\times\Omega\,,\label{v4e}
\end{align}
supplemented with homogeneous Neumann boundary conditions
\begin{equation}\label{Vebc)}
    \partial_\nu v_{1,\varepsilon}=\partial_\nu v_{2,\varepsilon}=\partial_\nu v_{3,\varepsilon}=\partial_\nu v_{4,\varepsilon}=0\,,\quad  (t,x)\in (0,\infty)\times\partial\Omega\,,
\end{equation}
and initial conditions
\begin{equation}\label{Veic}
   \bv_\varepsilon(0)=\bv_\varepsilon^0 := \big(u_1^0,u_2^0,u_3^0,a\big)\,,\quad  x\in \Omega\,.
\end{equation}
\end{subequations}

In the following, we denote positive constants that are independent of $\varepsilon\in (0,1)$ but depend on $T>0$ by $c_i(T)$, $i\ge 1$.

We begin with the derivation of $L_1$-estimates on $\bv_\varepsilon$.

\begin{lem}\label{L3}
Given $T>0$, there is $c_1(T)>0$ such that
\begin{equation}
    \|v_{1,\varepsilon}(t)\|_1 + \|v_{2,\varepsilon}(t)\|_1 + \|v_{3,\varepsilon}(t)\|_1 +\|v_{4,\varepsilon}(t)\|_1 \le c_1(T)\,, \qquad t\in [0,T]\,. \label{il10}
\end{equation}
\end{lem}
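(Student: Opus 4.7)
The plan is to sum the four equations in~\eqref{Ve}, integrate over~$\Omega$, and exploit the almost-conservation-of-mass structure inherited from~\eqref{U}. Since $\bu_\varepsilon^0\in \mathcal{Z}_{p_0,\varrho_\varepsilon}^+$ with $\varrho_\varepsilon := \|u_1^0\|_1+\|u_2^0\|_1+\|u_3^0\|_1+\|a\|_1/\varepsilon$, \Cref{T1} guarantees $\bu_\varepsilon(t)\in\mathcal{Z}_{p_0,\varrho_\varepsilon}^+$ for all $t\ge 0$, and hence $\bv_\varepsilon(t)$ is non-negative as well. This non-negativity is what makes the upcoming Gronwall argument work.

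First I would add up the four equations pointwise: the quadratic/cubic reaction terms $\pm v_{1,\varepsilon}v_{2,\varepsilon}^2$ and $\pm \varepsilon v_{2,\varepsilon}^3$ cancel between~\eqref{v1e} and~\eqref{v2e}, and the linear terms $\pm v_{2,\varepsilon}$ and $\pm \varepsilon v_{3,\varepsilon}$ cancel between~\eqref{v2e} and~\eqref{v3e}. What remains is
\begin{equation*}
    \partial_t(v_{1,\varepsilon}+v_{2,\varepsilon}+v_{3,\varepsilon}+v_{4,\varepsilon}) = d_1\Delta v_{1,\varepsilon}+d_2\Delta v_{2,\varepsilon}+d_3\Delta v_{3,\varepsilon}+\varepsilon\Delta v_{4,\varepsilon} -k_1(1-\varepsilon) v_{1,\varepsilon} + (1-\varepsilon) v_{4,\varepsilon}\,.
\end{equation*}
Integrating over $\Omega$ and using the Neumann boundary conditions~\eqref{Vebc)} to discard the diffusion contribution, the non-negativity of $v_{1,\varepsilon}$ and $\varepsilon\in(0,1)$ give
\begin{equation*}
    \frac{\rd}{\rd t}\int_\Omega (v_{1,\varepsilon}+v_{2,\varepsilon}+v_{3,\varepsilon}+v_{4,\varepsilon})\,\rd x \le (1-\varepsilon)\int_\Omega v_{4,\varepsilon}\,\rd x \le \int_\Omega (v_{1,\varepsilon}+v_{2,\varepsilon}+v_{3,\varepsilon}+v_{4,\varepsilon})\,\rd x\,.
\end{equation*}

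Gronwall's lemma then yields, for all $t\in[0,T]$,
\begin{equation*}
    \|v_{1,\varepsilon}(t)\|_1+\|v_{2,\varepsilon}(t)\|_1+\|v_{3,\varepsilon}(t)\|_1+\|v_{4,\varepsilon}(t)\|_1 \le e^{T}\big(\|u_1^0\|_1+\|u_2^0\|_1+\|u_3^0\|_1+\|a\|_1\big)\,,
\end{equation*}
where the right-hand side is precisely $\|\bv_\varepsilon(0)\|_1$, which by~\eqref{Veic} is independent of $\varepsilon$. Setting $c_1(T) := e^T(\|u_1^0\|_1+\|u_2^0\|_1+\|u_3^0\|_1+\|a\|_1)$ delivers~\eqref{il10}. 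There is no real obstacle: the point of the rescaling $\bv_\varepsilon=(u_{1,\varepsilon},u_{2,\varepsilon},u_{3,\varepsilon},\varepsilon u_{4,\varepsilon})$ is precisely to convert the blow-up of $\|u_{4,\varepsilon}^0\|_1=\|a\|_1/\varepsilon$ into a bounded initial datum $\|a\|_1$, while the exact conservation of total mass for $\bu_\varepsilon$ becomes the weaker but still uniform control stated above for $\bv_\varepsilon$.
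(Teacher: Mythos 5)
Your proof is correct and follows essentially the same route as the paper: sum the four equations of~\eqref{Ve}, integrate over $\Omega$ so the diffusion and reaction terms drop out, use the non-negativity of $v_{1,\varepsilon}$ and $v_{4,\varepsilon}$ to bound the remaining term $(1-\varepsilon)\|v_{4,\varepsilon}\|_1-(1-\varepsilon)k_1\|v_{1,\varepsilon}\|_1$ by the total $L_1$-norm, and conclude with Gronwall's lemma. The explicit constant $c_1(T)=e^T\big(\|u_1^0\|_1+\|u_2^0\|_1+\|u_3^0\|_1+\|a\|_1\big)$ and the remark that the rescaling $v_{4,\varepsilon}=\varepsilon u_{4,\varepsilon}$ renders the initial datum $\varepsilon$-independent are both accurate.
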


\begin{proof}
We integrate~\eqref{v1e}, \eqref{v2e}, \eqref{v3e}, and~\eqref{v4e} over $\Omega$ and sum up the resulting identities to obtain from the positivity of $v_{i,\varepsilon}$ that
\begin{align*}
    \frac{\rd}{\rd t} \left( \sum_{i=1}^4 \|v_{i,\varepsilon}(t)\|_1 \right) & = (1-\varepsilon) \|v_{4,\varepsilon}\|_1 - (1-\varepsilon)k_1 \|v_{1,\varepsilon}\|_1 \le \|v_{4,\varepsilon}\|_1\,.
\end{align*}
\Cref{L3} now readily follows by Gronwall's lemma.
\end{proof}

We next turn to $L_2$-estimates.

\begin{lem}\label{L4}
Given $T>0$, there is $c_2(T)>0$ such that
\begin{equation}
    \|v_{1,\varepsilon}(t)\|_2 + \sqrt{\varepsilon} \|v_{2,\varepsilon}(t)\|_2 + \|v_{4,\varepsilon}(t)\|_2 \le c_2(T)\,, \qquad t\in [0,T]\,, \label{il11}
\end{equation}
 and
\begin{equation}
    \int_0^T \left[ d_1 \|\nabla v_{1,\varepsilon}(s)\|_2^2 + \varepsilon \|\nabla v_{4,\varepsilon}(s)\|_2^2 + \|v_{2,\varepsilon}(s) \big(v_{1,\varepsilon} - \varepsilon v_{2,\varepsilon} \big)(s)\|_2^2 \right]\,\rd s \le c_2(T)\,.
   \label{il12}  
\end{equation}
\end{lem}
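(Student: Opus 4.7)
I would combine a weighted energy identity for $(v_{1,\varepsilon},v_{2,\varepsilon})$ with pointwise bounds on $v_{1,\varepsilon}$ and $v_{4,\varepsilon}$ that are uniform in $\varepsilon$ and inherited from the Liapunov argument underlying \Cref{T1}. The starting observation is that, under the scaling~\eqref{il01} and the initial condition $u_{4,\varepsilon}^0=a/\varepsilon$, the weighted quantity $(k_4/k_1)\|u_{4,\varepsilon}^0\|_\infty=\|a\|_\infty/k_1$ is $\varepsilon$-independent. Setting
\[
\bar M:=\max\{\|u_1^0\|_\infty,\|u_2^0\|_\infty,\|u_3^0\|_\infty,\|a\|_\infty/k_1\}\,,
\]
which is finite since $p_0>n$ implies $W_{p_0}^1(\Omega)\hookrightarrow L_\infty(\Omega)$, one checks that $\mathcal{E}_{\Phi_{\bar M}}(\bu_\varepsilon^0)=0$ for the convex function $\Phi_{\bar M}(r):=(r-\bar M)_+$. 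Running the Liapunov argument from the proof of \Cref{T1} from $t_0=0$ (which is allowed here because $\bu_\varepsilon^0\in L_\infty(\Omega)$), after approximating $\Phi_{\bar M}$ by smooth convex functions, then yields
\[
\|v_{1,\varepsilon}(t)\|_\infty\le\bar M,\quad \|v_{4,\varepsilon}(t)\|_\infty\le k_1\bar M,\quad \varepsilon\|v_{2,\varepsilon}(t)\|_\infty\le\bar M,\quad \varepsilon^2\|v_{3,\varepsilon}(t)\|_\infty\le\bar M,
\]
uniformly in $\varepsilon\in(0,1)$ and $t\ge 0$. The $L_2$-bounds on $v_{1,\varepsilon}$ and $v_{4,\varepsilon}$ in~\eqref{il11} follow at once from $|\Omega|<\infty$.

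Next I would derive a combined energy identity by multiplying~\eqref{v1e} by $v_{1,\varepsilon}$ and~\eqref{v2e} by $\varepsilon v_{2,\varepsilon}$, integrating over $\Omega$, and summing. The crucial algebraic cancellation among the cubic terms is
\[
-v_{1,\varepsilon}^2 v_{2,\varepsilon}^2+2\varepsilon v_{1,\varepsilon}v_{2,\varepsilon}^3-\varepsilon^2 v_{2,\varepsilon}^4=-v_{2,\varepsilon}^2(v_{1,\varepsilon}-\varepsilon v_{2,\varepsilon})^2\,,
\]
which produces exactly the dissipative square appearing in~\eqref{il12}. After integration by parts justified by the Neumann condition~\eqref{Vebc)}, the identity reads
\begin{align*}
&\frac{1}{2}\frac{\rd}{\rd t}\big(\|v_{1,\varepsilon}\|_2^2+\varepsilon\|v_{2,\varepsilon}\|_2^2\big)+d_1\|\nabla v_{1,\varepsilon}\|_2^2+\varepsilon d_2\|\nabla v_{2,\varepsilon}\|_2^2\\
&\qquad +\|v_{2,\varepsilon}(v_{1,\varepsilon}-\varepsilon v_{2,\varepsilon})\|_2^2+k_1\|v_{1,\varepsilon}\|_2^2+\varepsilon\|v_{2,\varepsilon}\|_2^2\\
&\quad =\int_\Omega v_{1,\varepsilon}v_{4,\varepsilon}\,\rd x+\varepsilon^2\int_\Omega v_{2,\varepsilon}v_{3,\varepsilon}\,\rd x\,.
\end{align*}
Both right-hand-side terms are controlled uniformly in $(t,\varepsilon)$ by combining the previous step with \Cref{L3}: namely, $\int v_{1,\varepsilon}v_{4,\varepsilon}\le\|v_{1,\varepsilon}\|_\infty\|v_{4,\varepsilon}\|_1\le\bar M\,c_1(T)$ and $\varepsilon^2\int v_{2,\varepsilon}v_{3,\varepsilon}\le (\varepsilon\|v_{2,\varepsilon}\|_\infty)\,\varepsilon\|v_{3,\varepsilon}\|_1\le\varepsilon\bar M\,c_1(T)$. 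Integrating on $[0,t]$ with $t\in[0,T]$ and using $\|v_{1,\varepsilon}^0\|_2+\sqrt{\varepsilon}\|v_{2,\varepsilon}^0\|_2\le c$ uniformly in $\varepsilon$ then yields~\eqref{il11} for $\sqrt{\varepsilon}\|v_{2,\varepsilon}\|_2$ and the first and third time-integrated bounds in~\eqref{il12}. The remaining bound on $\int_0^T\varepsilon\|\nabla v_{4,\varepsilon}\|_2^2\,\rd s$ finally follows by a separate test of~\eqref{v4e} against $v_{4,\varepsilon}$, the source term $\varepsilon k_1\int v_{1,\varepsilon}v_{4,\varepsilon}$ being handled exactly as above.

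The most delicate point of this route is the first step. The Liapunov argument in the proof of \Cref{T1} was carried out for $t_0>0$ since it relied on the parabolic smoothing $\bu(t)\in W_p^2(\Omega)$ for $t>0$; here it must be run from $t_0=0$ in order to capture the $\varepsilon$-independent bound encoded in $(k_4/k_1)\|u_{4,\varepsilon}^0\|_\infty=\|a\|_\infty/k_1$. This is precisely where the stronger hypothesis $(u_1^0,u_2^0,u_3^0,a)\in W_{p_0}^{1,+}(\Omega)$, $p_0>n$, of \Cref{T3} enters: it supplies the $L_\infty$-regularity of $\bu_\varepsilon^0$ needed to ensure that $\mathcal{E}_{\Phi_{\bar M}}(\bu_\varepsilon^0)=0$ and thus to propagate the $L_\infty$-bounds from the initial time, uniformly in $\varepsilon$.
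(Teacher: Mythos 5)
Your proof is correct, and it shares the essential algebraic core with the paper's argument --- namely the cancellation $-v_{1,\varepsilon}^2v_{2,\varepsilon}^2+2\varepsilon v_{1,\varepsilon}v_{2,\varepsilon}^3-\varepsilon^2v_{2,\varepsilon}^4=-v_{2,\varepsilon}^2(v_{1,\varepsilon}-\varepsilon v_{2,\varepsilon})^2$ obtained by testing \eqref{v1e} with $v_{1,\varepsilon}$ and \eqref{v2e} with $\varepsilon v_{2,\varepsilon}$ --- but it closes the estimate differently. The paper works entirely at the $L_2$ level: it tests all four equations with the weights $(1,\varepsilon,\varepsilon^2,1)$, so that the exchange terms $-\varepsilon\int v_{2,\varepsilon}(v_{2,\varepsilon}-\varepsilon v_{3,\varepsilon})+\varepsilon^2\int v_{3,\varepsilon}(v_{2,\varepsilon}-\varepsilon v_{3,\varepsilon})=-\varepsilon\|v_{2,\varepsilon}-\varepsilon v_{3,\varepsilon}\|_2^2\le 0$ also combine into a sign, and the only remaining term $(1+k_1\varepsilon)\int v_{1,\varepsilon}v_{4,\varepsilon}$ is absorbed by Young's inequality into $\frac{1+k_1}{2}(\|v_{1,\varepsilon}\|_2^2+\|v_{4,\varepsilon}\|_2^2)$ and handled by Gronwall; no $L_\infty$ information and no use of \Cref{L3} is needed. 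You instead first propagate $\varepsilon$-uniform $L_\infty$ bounds from $t=0$ via the Liapunov functional $\mathcal{E}_{\Phi_{\bar M}}$, exploiting the key observation that the weighted initial datum $(k_4/k_1)\|u_{4,\varepsilon}^0\|_\infty=\|a\|_\infty/k_1$ is $\varepsilon$-independent, and then use these sup bounds together with \Cref{L3} to control the cross terms. This is legitimate (the hypothesis $p_0>n$ does give $\bu_\varepsilon^0\in L_\infty(\Omega)$, so running the $\Phi_{\bar M}$-argument from $t_0=0$ is justified, modulo the same smoothing of the positive part that the paper already invokes), and it buys you more than the lemma asks for --- uniform-in-$\varepsilon$ pointwise bounds on $v_{1,\varepsilon}$ and $v_{4,\varepsilon}$ --- at the cost of importing the whole Liapunov machinery of \Cref{P2} into what the paper keeps as a self-contained two-line Gronwall estimate. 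Note also that your separate test of \eqref{v4e} against $v_{4,\varepsilon}$ is genuinely needed in your version to recover $\int_0^T\varepsilon\|\nabla v_{4,\varepsilon}\|_2^2\,\rd s$, whereas in the paper this term comes for free from the four-component energy.
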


\begin{proof}
It follows from~\eqref{Ve} and Young's inequality that
\begin{align*}
    & \frac{1}{2} \frac{\rd}{\rd t} \left( \|v_{1,\varepsilon}\|_2^2 + \varepsilon \|v_{2,\varepsilon}\|_2^2 + \varepsilon^2 \|v_{3,\varepsilon}\|_2^2 + \|v_{4,\varepsilon}\|_2^2 \right) \\
    & \quad = - d_1 \|\nabla v_{1,\varepsilon}\|_2^2 - \int_\Omega v_{1,\varepsilon} v_{2,\varepsilon}^2 \big( v_{1,\varepsilon} - \varepsilon v_{2,\varepsilon} \big)\,\rd x - k_1 \|v_{1,\varepsilon}\|_2^2 + \int_\Omega v_{1,\varepsilon} v_{4,\varepsilon}\,\rd x \\
    & \qquad - d_2 \varepsilon \|\nabla v_{2,\varepsilon}\|_2^2 + \varepsilon \int_\Omega v_{2,\varepsilon}^3 \big( v_{1,\varepsilon} - \varepsilon v_{2,\varepsilon} \big)\,\rd x - \varepsilon \int_\Omega v_{2,\varepsilon} \big( v_{2,\varepsilon} - \varepsilon v_{3,\varepsilon} \big)\,\rd x \\
    & \qquad - d_3 \varepsilon^2 \|\nabla v_{3,\varepsilon}\|_2^2 +\varepsilon^2 \int_\Omega v_{3,\varepsilon} \big( v_{2,\varepsilon} - \varepsilon v_{3,\varepsilon} \big)\,\rd x - \varepsilon \|\nabla v_{4,\varepsilon}\|_2^2 \\
    & \qquad + k_1\varepsilon \int_\Omega v_{1,\varepsilon} v_{4,\varepsilon}\,\rd x - \varepsilon \|v_{4,\varepsilon}\|_2^2\\
    & \quad \le - d_1 \|\nabla v_{1,\varepsilon}\|_2^2 - \big\| v_{2,\varepsilon} \big( v_{1,\varepsilon} - \varepsilon v_{2,\varepsilon} \big) \big\|_2^2 - \varepsilon \|\nabla v_{4,\varepsilon}\|_2^2 + (1+k_1\varepsilon) \int_\Omega v_{1,\varepsilon} v_{4,\varepsilon}\,\rd x \\
    & \quad \le - d_1 \|\nabla v_{1,\varepsilon}\|_2^2 - \big\| v_{2,\varepsilon} \big( v_{1,\varepsilon} - \varepsilon v_{2,\varepsilon} \big) \big\|_2^2 - \varepsilon \|\nabla v_{4,\varepsilon}\|_2^2 + \frac{1+k_1}{2} \left( \|v_{1,\varepsilon}\|_2^2 + \|v_{4,\varepsilon}\|_2^2 \right)\,.
\end{align*}  
Applying Gronwall's lemma completes the proof.
\end{proof}

At this point, we notice that, since
\begin{align*}
    \left\| v_{1,\varepsilon} v_{2,\varepsilon}^2 - \varepsilon v_{2,\varepsilon}^3 \right\|_{L_1((0,T)\times\Omega)} & = \int_0^T \int_\Omega v_{2,\varepsilon} \big( v_{1,\varepsilon} - \varepsilon v_{2,\varepsilon} \big) v_{2,\varepsilon}\,\rd x \\
    & \le \left\| v_{2,\varepsilon} \big( v_{1,\varepsilon} - \varepsilon v_{2,\varepsilon} \big) \right\|_{L_2((0,T)\times\Omega)} \big\| v_{2,\varepsilon} \big\|_{L_2((0,T)\times\Omega)}
\end{align*}
by H\"older's inequality, \Cref{L4} does not provide enough valuable information on the right-hand side of~\eqref{v1e} and~\eqref{v2e}. However, we observe that an $L_2$-estimate on $\big( v_{2,\varepsilon} \big)_{\varepsilon\in (0,1)}$ would be sufficient to obtain an $L_1$-control on the nonlinearity $\big( v_{1,\varepsilon} v_{2,\varepsilon}^2 - \varepsilon v_{2,\varepsilon}^3 \big)_{\varepsilon\in (0,1)}$, which then guarantees the compactness of $\big( v_{1,\varepsilon} \big)_{\varepsilon\in (0,1)}$ and $\big( v_{2,\varepsilon} \big)_{\varepsilon\in (0,1)}$ in $L_1((0,T)\times\Omega)$, but not that of $\big( v_{1,\varepsilon} v_{2,\varepsilon}^2 - \varepsilon v_{2,\varepsilon}^3 \big)_{\varepsilon\in (0,1)}$. Higher integrability estimates are thus required that we derive now by exploiting the structure of~\eqref{Ve} and using the following improved duality results \cite{CDF2014,DeTr2015}:

\begin{prop}\label{P4}
Given $T>0$ and $M\in L_\infty((0,T)\times\Omega)$ satisfying
\begin{equation*}
    0 < \alpha \le M \le \beta \;\text{ a.e. in }\; (0,T)\times\Omega
\end{equation*}    
for some $(\alpha,\beta)\in (0,\infty)^2$, there is $r_0\in (1,2)$ depending only on $n$, $\Omega$, $\alpha$, and $\beta$ with the following properties: for each $r\in [r_0,2]$, there is a positive constant $C_D(r)>0$ depending only on $n$, $\Omega$, $\alpha$, $\beta$, and $r$ (but not on $T$) such that, given $f\in L_r((0,T)\times\Omega)$, the solution $\xi_{M,f}$ to the backward linear parabolic initial boundary value problem
\begin{equation}
\begin{split}
    \partial_t \xi_{M,f} + M \Delta \xi_{M,f} & = f \;\;\text{ in }\;\; (0,T)\times\Omega\,, \\
    \partial_\nu \xi_{M,f} & = 0 \;\;\text{ on }\;\; (0,T)\times\partial\Omega\,, \\
    \xi_{M,f}(T) & = 0 \;\;\text{ in }\;\; \Omega\,,
\end{split}\label{dIBVP}
\end{equation}
satisfies
\begin{align*}
    & \|\Delta\xi_{M,f}\|_{L_r((0,T)\times\Omega)} \le C_D(r) \|f\|_{L_r((0,T)\times\Omega)}\,, \\
    & \|\xi_{M,f}(t)\|_r \le (1+\beta C_D(r)) (T-t)^{(r-1)/r} \|f\|_{L_r((0,T)\times\Omega)}\,, \qquad t\in [0,T]\,.
\end{align*}
\end{prop}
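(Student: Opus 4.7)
The plan is to establish the estimate on $\Delta \xi_{M,f}$ first in the Hilbert case $r=2$ by a direct energy identity, extend it to $r$ in a one-sided neighborhood of $2$ by a Meyers-type perturbation around a constant-coefficient parabolic operator, and then deduce the $L_r$ bound on $\xi_{M,f}(t)$ itself by integrating the equation in time. For $r=2$ I would multiply the backward equation by $-\Delta \xi_{M,f}$ and integrate over $(t,T)\times\Omega$; the homogeneous Neumann condition combined with $\xi_{M,f}(T)=0$ turns the time-derivative term into $\tfrac{1}{2}\|\nabla \xi_{M,f}(t)\|_2^2\ge 0$, and using $M\ge\alpha$ and the Cauchy--Schwarz inequality yields $\|\Delta\xi_{M,f}\|_{L_2((0,T)\times\Omega)}\le \alpha^{-1}\|f\|_{L_2((0,T)\times\Omega)}$, so $C_D(2)\le 1/\alpha$.

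To pass to $r$ close to $2$, fix the constant $M_0:=(\alpha+\beta)/2$ and compare $\xi_{M,f}$ with the solution $\eta_{M_0,f}$ of the constant-coefficient backward problem with identical boundary and terminal data. Classical $L_r$-parabolic regularity (Calder\'on--Zygmund theory for the Neumann heat kernel on $\Omega$) furnishes a constant $\widetilde{C}(r)$, continuous in $r\in (1,\infty)$ and with $\widetilde{C}(2)=1/M_0$ by the Hilbertian step, such that $\|\Delta\eta_{M_0,f}\|_{L_r}\le \widetilde{C}(r)\|f\|_{L_r}$. Since $\zeta:=\xi_{M,f}-\eta_{M_0,f}$ solves the same constant-coefficient equation with source $(M_0-M)\Delta\xi_{M,f}$ and zero data, applying the same estimate to $\zeta$ and using $\|M-M_0\|_\infty\le (\beta-\alpha)/2$ gives
\begin{equation*}
\|\Delta \xi_{M,f}\|_{L_r}\,\le\, \widetilde{C}(r)\|f\|_{L_r}\,+\,\widetilde{C}(r)\,\frac{\beta-\alpha}{2}\,\|\Delta \xi_{M,f}\|_{L_r}\,.
\end{equation*}
At $r=2$ the contraction factor equals $(\beta-\alpha)/(\alpha+\beta)<1$, so by continuity of $\widetilde{C}$ there is $r_0\in (1,2)$ (depending only on $n,\Omega,\alpha,\beta$) for which this factor stays below $1$ on $[r_0,2]$. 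Absorbing the rightmost term produces $\|\Delta \xi_{M,f}\|_{L_r}\le C_D(r)\|f\|_{L_r}$ with $C_D(r):=\widetilde{C}(r)/\bigl(1-\widetilde{C}(r)(\beta-\alpha)/2\bigr)$, a constant manifestly independent of~$T$.

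For the $L_r$ bound on $\xi_{M,f}(t)$ itself, I would integrate $\partial_s \xi_{M,f}=f-M\Delta\xi_{M,f}$ from $t$ to $T$, using $\xi_{M,f}(T)=0$, to get $\xi_{M,f}(t)=\int_t^T \bigl(M\Delta\xi_{M,f}-f\bigr)(s)\,\rd s$; Minkowski's inequality in $L_r(\Omega)$, H\"older's inequality in time (which yields the factor $(T-t)^{(r-1)/r}$), and the previous step combine to give the claim. The main obstacle is the second step: since $M$ is only in $L_\infty$, no variable-coefficient $L_r$-parabolic regularity is directly available, and the conclusion must be bootstrapped from $r=2$ by a perturbation that degenerates as the ellipticity ratio $\beta/\alpha$ grows, which is reflected in the dependence of $r_0$ and $C_D(r)$ on $\alpha$ and $\beta$. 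To make the absorption rigorous one should also perform an approximation step (smoothing $M$ and $f$) to guarantee the a priori finiteness of $\|\Delta\xi_{M,f}\|_{L_r}$ before it is moved to the left-hand side; this is the standard route pursued in \cite{CDF2014,DeTr2015}.
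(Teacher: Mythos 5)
Your proposal is correct and follows essentially the same route as the proof the paper relies on: the paper does not prove Proposition~\ref{P4} itself but refers to \cite[Section~2]{CDF2014} and \cite[Lemma~4]{DeTr2015}, where precisely this argument is carried out — the $L_2$ energy identity giving $C_D(2)\le 1/\alpha$, the perturbation around the constant coefficient $M_0=(\alpha+\beta)/2$ with contraction factor $(\beta-\alpha)/(\alpha+\beta)<1$ extended to $r\in[r_0,2]$ by continuity (via interpolation) of the constant-coefficient maximal-regularity constant, and the time integration yielding the pointwise $L_r$ bound with the factor $(T-t)^{(r-1)/r}$. Your closing remarks on the need for an approximation step to justify the absorption are exactly the standard caveat addressed in those references.
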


We refer to \cite[Section~2]{CDF2014} and \cite[Lemma~4]{DeTr2015} for a proof of \Cref{P4}.

\begin{lem}\label{L5}
Given $T>0$, there are $q>2$ (not depending on $T$) and $c_3(T)>0$ such that
\begin{align}
    \|v_{1,\varepsilon}\|_{L_q((0,T)\times\Omega)} + \|v_{2,\varepsilon}\|_{L_q((0,T)\times\Omega)} + \|v_{3,\varepsilon}\|_{L_q((0,T)\times\Omega)} & \le c_3(T)\,, \label{il13}\\
    \left\| v_{1,\varepsilon} v_{2,\varepsilon}^2 - \varepsilon v_{2,\varepsilon}^3 \right\|_{L_{2q/(q+2)}((0,T)\times\Omega)} & \le c_3(T)\,. \label{il14}
\end{align}
\end{lem}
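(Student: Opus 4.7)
The plan is to derive a uniform $L_q$ bound with $q>2$ via a Pierre-style duality estimate applied to a suitable linear combination of the first three equations of~\eqref{Ve}. The key observation is that summing~\eqref{v1e},~\eqref{v2e} and~\eqref{v3e} cancels both the nonlinear terms $\pm(v_{1,\varepsilon}v_{2,\varepsilon}^2 - \varepsilon v_{2,\varepsilon}^3)$ and the exchange terms $\pm(v_{2,\varepsilon} - \varepsilon v_{3,\varepsilon})$, leaving
\begin{equation*}
    \partial_t W_\varepsilon - \Delta(M_\varepsilon W_\varepsilon) = -k_1 v_{1,\varepsilon} + v_{4,\varepsilon}
\end{equation*}
for $W_\varepsilon := v_{1,\varepsilon} + v_{2,\varepsilon} + v_{3,\varepsilon}$ and $M_\varepsilon := (d_1 v_{1,\varepsilon} + d_2 v_{2,\varepsilon} + d_3 v_{3,\varepsilon})/W_\varepsilon$, with $\min\{d_1,d_2,d_3\} \le M_\varepsilon \le \max\{d_1,d_2,d_3\}$ uniformly in $\varepsilon$. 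Both $W_\varepsilon$ and $M_\varepsilon W_\varepsilon = d_1 v_{1,\varepsilon} + d_2 v_{2,\varepsilon} + d_3 v_{3,\varepsilon}$ inherit homogeneous Neumann boundary conditions, so \Cref{P4} applies with $\alpha,\beta$ independent of $\varepsilon$.

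The preliminary step is to control the source $G_\varepsilon := -k_1 v_{1,\varepsilon} + v_{4,\varepsilon}$ uniformly in $L_{q_1}((0,T) \times \Omega)$ for some $q_1 > 2$. On one hand, \Cref{L4} gives $v_{1,\varepsilon}$ bounded in $L_\infty(0,T;L_2(\Omega)) \cap L_2(0,T;W_2^1(\Omega))$, so the parabolic Gagliardo--Nirenberg inequality yields such a bound with $q_1 := 2 + 4/n$. On the other hand, writing~\eqref{v4e} in mild form and using the $L_{q_1}$-contractivity of the Neumann heat semigroup gives
\begin{equation*}
    \|v_{4,\varepsilon}(t)\|_{q_1} \le \|a\|_{q_1} + \varepsilon k_1 \int_0^t \|v_{1,\varepsilon}(s)\|_{q_1}\,\rd s\,, \qquad t \in [0,T]\,,
\end{equation*}
which, combined with H\"older's inequality in time and the previously derived $L_{q_1}$-bound on $v_{1,\varepsilon}$, yields $\|v_{4,\varepsilon}\|_{L_{q_1}((0,T) \times \Omega)} \le c(T)$ and hence $\|G_\varepsilon\|_{L_{q_1}((0,T) \times \Omega)} \le c(T)$ uniformly in $\varepsilon$.

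For the duality step, choose $q \in (2, q_1]$ small enough that $r := q/(q-1) \in [r_0, 2)$, where $r_0$ is the exponent supplied by \Cref{P4} for $M_\varepsilon$ (with $\varepsilon$-independent $\alpha = \min\{d_1,d_2,d_3\}$ and $\beta = \max\{d_1,d_2,d_3\}$). For arbitrary $f \in L_r((0,T) \times \Omega)$, solve~\eqref{dIBVP} with coefficient $M_\varepsilon$ and right-hand side $f$, multiply the parabolic equation for $W_\varepsilon$ by $\xi_{M_\varepsilon,f}$, and integrate by parts twice in space and once in time. The Neumann conditions on $\xi_{M_\varepsilon,f}$ and on $M_\varepsilon W_\varepsilon$, together with $\xi_{M_\varepsilon,f}(T)=0$, cause all boundary terms to vanish, yielding
\begin{equation*}
    \int_0^T \int_\Omega W_\varepsilon f\,\rd x\,\rd t = -\int_\Omega W_\varepsilon(0)\xi_{M_\varepsilon,f}(0)\,\rd x - \int_0^T \int_\Omega G_\varepsilon \xi_{M_\varepsilon,f}\,\rd x\,\rd t\,.
\end{equation*}
Combining H\"older's inequality, the two estimates on $\xi_{M_\varepsilon,f}$ from \Cref{P4}, the $L_{q_1}$-bound on $G_\varepsilon$ and the embedding $W_{p_0}^1 \hookrightarrow L_\infty$ applied to $W_\varepsilon(0)$, bounds the right-hand side by $c(T)\|f\|_{L_r}$, and duality delivers $\|W_\varepsilon\|_{L_q((0,T)\times\Omega)} \le c(T)$. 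Since $0 \le v_{i,\varepsilon} \le W_\varepsilon$ for $i\in\{1,2,3\}$, this proves~\eqref{il13}, and~\eqref{il14} then follows from the factorisation $v_{1,\varepsilon}v_{2,\varepsilon}^2 - \varepsilon v_{2,\varepsilon}^3 = v_{2,\varepsilon}\cdot v_{2,\varepsilon}(v_{1,\varepsilon} - \varepsilon v_{2,\varepsilon})$ and H\"older with exponents $q$ and $2$, using~\eqref{il13} together with the $L_2$-bound from~\eqref{il12}.

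The main obstacle is securing the $\varepsilon$-uniform control on $v_{4,\varepsilon}$: \Cref{L4} only provides $\sqrt{\varepsilon}\nabla v_{4,\varepsilon} \in L_2((0,T)\times\Omega)$, which degenerates as $\varepsilon \to 0$, so no direct Gagliardo--Nirenberg bound is available. What rescues the argument is the specific structure of~\eqref{v4e}, where the factor $\varepsilon$ in front of $k_1 v_{1,\varepsilon}$ exactly compensates for the vanishing diffusive smoothing, allowing the mild formula to transfer the $L_{q_1}$-bound from $v_{1,\varepsilon}$ to $v_{4,\varepsilon}$ without amplification as $\varepsilon \to 0$. The $\varepsilon$-uniformity of the duality constant in \Cref{P4} is then automatic because $M_\varepsilon$ takes values in a fixed $\varepsilon$-independent interval.
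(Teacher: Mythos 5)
Your proposal is correct and follows essentially the same route as the paper: the same quantities $W_\varepsilon=\Sigma_\varepsilon$ and $M_\varepsilon$, the same duality identity against $\xi_{M_\varepsilon,f}$ from \Cref{P4}, the same Gagliardo--Nirenberg control of $v_{1,\varepsilon}$ from \Cref{L4} and mild-formulation/contraction argument for $v_{4,\varepsilon}$ (exploiting the $\varepsilon$ prefactor), and the same H\"older factorisation $v_{2,\varepsilon}\cdot v_{2,\varepsilon}(v_{1,\varepsilon}-\varepsilon v_{2,\varepsilon})$ for~\eqref{il14}. The only cosmetic differences are that you bound the source in a fixed $L_{q_1}$ with $q_1=2+4/n$ before the duality step rather than inline in $L_{r/(r-1)}$, and you use $W_{p_0}^1\hookrightarrow L_\infty$ for the initial-data term where the paper uses the condition $r\ge p_0/(p_0-1)$.
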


\begin{proof}
Introducing $\Sigma_\varepsilon := v_{1,\varepsilon} + v_{2,\varepsilon} + v_{3,\varepsilon}$ and 
\begin{equation*}
    M_\varepsilon := \frac{d_1 v_{1,\varepsilon} + d_2 v_{2,\varepsilon} + d_3 v_{3,\varepsilon}}{v_{1,\varepsilon} + v_{2,\varepsilon} + v_{3,\varepsilon}} \in \big[ \min\{d_1,d_2,d_3\},\max\{d_1,d_2,d_3\} \big]\,,
\end{equation*}
we note that $M_\varepsilon$ satisfies the assumptions of \Cref{P4} with $\alpha = \min\{d_1,d_2,d_3\}>0$ and $\beta=\max\{d_1,d_2,d_3\}$. We next fix $r\in [r_0,2]$ such that
\begin{equation}\label{r}
    r \ge \max\left\{ \frac{p_0}{p_0-1} , \frac{2n+4}{n+4} \right\} > \frac{2n}{n+2}
\end{equation}
with $r_0$ given by \Cref{P4} and deduce from~\eqref{Ve} and~\eqref{dIBVP}, for $f\in L_r((0,T)\times\Omega)$, that
\begin{align*}
    \frac{\rd}{\rd t} \int_\Omega \Sigma_\varepsilon \xi_{M_\varepsilon,f}\,\rd x & = \int_\Omega \Sigma_\varepsilon \big( -M_\varepsilon \Delta\xi_{M_\varepsilon,f} +f \big)\,\rd x + \int_\Omega \xi_{M_\varepsilon,f} \big[ \Delta(M_\varepsilon\Sigma_\varepsilon) + v_{4,\varepsilon} - k_1 v_{1,\varepsilon} \big] \,\rd x\\
    & = \int_\Omega \big[ \nabla(M_\varepsilon \Sigma_\varepsilon)\cdot \nabla \xi_{M_\varepsilon,f} + f \Sigma_\varepsilon \big]\,\rd x \\
    & \quad - \int_\Omega \left[ \nabla \xi_{M_\varepsilon,f}\cdot\nabla(M_\varepsilon \Sigma_\varepsilon) + (k_1 v_{1,\varepsilon} - v_{4,\varepsilon}) \xi_{M_\varepsilon,f}\right]\,\rd x\\
    & = \int_\Omega f \Sigma_\varepsilon\,\rd x - \int_\Omega (k_1 v_{1,\varepsilon} - v_{4,\varepsilon}) \xi_{M_\varepsilon,f}\,\rd x\,.
\end{align*}
Integrating over $(0,T)$, we end up with
\begin{equation*}
    -\int_\Omega \Sigma_\varepsilon(0,x) \xi_{M_\varepsilon,f}(0,x)\,\rd x = \int_0^T \int_\Omega f \Sigma_\varepsilon\,\rd x\rd t - \int_0^T \int_\Omega (k_1 v_{1,\varepsilon} - v_{4,\varepsilon}) \xi_{M_\varepsilon,f}\,\rd x\rd t\,, 
\end{equation*}
from which we deduce that
\begin{equation}
\begin{split}
    \left| \int_0^T \int_\Omega f \Sigma_\varepsilon\,\rd x\rd t \right| & \le \left| \int_\Omega \Sigma_\varepsilon(0,x) \xi_{M_\varepsilon,f}(0,x)\,\rd x \right| \\
    & \quad + \left| \int_0^T \int_\Omega (k_1 v_{1,\varepsilon} - v_{4,\varepsilon}) \xi_{M_\varepsilon,f}\,\rd x\rd t \right| \,.
\end{split} \label{il15}
\end{equation}
On the one hand, by H\"older's inequality, \Cref{P4}, and~\eqref{r} we have
\begin{align}
    & \left| \int_\Omega \Sigma_\varepsilon(0,x) \xi_{M_\varepsilon,f}(0,x)\,\rd x \right| \le \|\Sigma_\varepsilon(0)\|_{r/(r-1)} \|\xi_{M_\varepsilon,f}(0)\|_r \nonumber \\
    & \qquad \le \big( 1 + \max\{d_1,d_2,d_3\} C_D(r) \big) T^{(r-1)/r} \left\|\sum_{i=1}^3 u_i^0 \right\|_{r/(r-1)} \|f\|_{L_r((0,T)\times\Omega)} \nonumber\\
    & \qquad \le c(T) \|f\|_{L_r((0,T)\times\Omega)}\,. \label{il18}
\end{align}
On the other hand, since
\begin{align*}
    \|\xi_{M_\varepsilon,f}\|_{L_r((0,T)\times\Omega)} & \le \big( 1 + \max\{d_1,d_2,d_3\} C_D(r) \big) \left( \int_0^T (T-t)^{r-1}\,\mathrm{d}t \right)^{1/r} \|f\|_{L_r((0,T)\times\Omega)} \\
    & \le c(T) \|f\|_{L_r((0,T)\times\Omega)}
\end{align*}
by \Cref{P4}, a similar argument gives
\begin{align*}
    & \left| \int_0^T \int_\Omega (k_1 v_{1,\varepsilon} - v_{4,\varepsilon}) \xi_{M_\varepsilon,f}\,\rd x\rd t \right| \\
    & \qquad \le \left( k_1 \|v_{1,\varepsilon}\|_{L_{r/(r-1)}((0,T)\times\Omega)} + \|v_{4,\varepsilon}\|_{L_{r/(r-1)}((0,T)\times\Omega)} \right) \|\xi_{M_\varepsilon,f}\|_{L_r((0,T)\times\Omega)}\\
    & \qquad \le c(T) (1+k_1) \left( \|v_{1,\varepsilon}\|_{L_{r/(r-1)}((0,T)\times\Omega)} + \|v_{4,\varepsilon}\|_{L_{r/(r-1)}((0,T)\times\Omega)} \right) \|f\|_{L_r((0,T)\times\Omega)}\,.
\end{align*}
Now, the choice of $r$ entails that $r\ge 2n/(n+2)$, so that $H^1(\Omega)$ embeds continuously $L_{r/(r-1)}(\Omega)$. We then infer from the Gagliardo-Nirenberg inequality and~\eqref{il11} that, for $t\in [0,T]$,
\begin{align*}
    \|v_{1,\varepsilon}(t)\|_{r/(r-1)} & \le C(r) \|v_{1,\varepsilon}(t)\|_{H^1}^{n(2-r)/(2r)} \|v_{1,\varepsilon}(t)\|_2^{[(n+2)r-2n]/(2r)} \\
    & \le c(T) \left( 1 + \|\nabla v_{1,\varepsilon}(t)\|_{2}^{n(2-r)/(2r)} \right)\,.
\end{align*}
Therefore,
\begin{equation}
    \|v_{1,\varepsilon}\|_{L_{r/(r-1)}((0,T)\times\Omega)}^{r/(r-1)} \le c(T) \left( 1 + \|\nabla v_{1,\varepsilon}(t)\|_{2}^{n(2-r)/(2r-2)} \right) \le c(T) \label{il16}
\end{equation}
by~\eqref{il12}, since $n(2-r)/(2r-2)\le 2$ due to~\eqref{r}. Owing to the contraction properties of the heat semigroup in $L_{r/(r-1)}(\Omega)$, we readily infer from~\eqref{v4e} and~\eqref{il16} that, for $t\in [0,T]$, 
\begin{align*}
    \|v_{4,\varepsilon}(t)\|_{r/(r-1)} &\le \|v_{4,\varepsilon}(0)\|_{r/(r-1)} + k_1 t^r \|v_{1,\varepsilon}\|_{L_{r/(r-1)}((0,T)\times\Omega)} \\
    & \le \|a\|_{r/(r-1)} + c(T)\le c(T)\,, 
\end{align*}
recalling that $r>p_0/(p_0-1)$. Gathering the above estimates leads us to
\begin{equation*}
    \|v_{1,\varepsilon}\|_{L_{r/(r-1)}((0,T)\times\Omega)} + \|v_{4,\varepsilon}\|_{L_{r/(r-1)}((0,T)\times\Omega)} \le c(T)
\end{equation*}
and we conclude that
\begin{equation}
   \left| \int_0^T \int_\Omega (k_1 v_{1,\varepsilon} - v_{4,\varepsilon}) \xi_{M_\varepsilon,f}\,\rd x\rd t \right| \le c(T) \|f\|_{L_r((0,T)\times\Omega)}\,. \label{il17}
\end{equation}
It now follows from~\eqref{il15}, \eqref{il18} and~\eqref{il17} that
\begin{equation*}
    \left| \int_0^T \int_\Omega f \Sigma_\varepsilon\,\rd x\rd t \right| \le c(T) \|f\|_{L_r((0,T)\times\Omega)}\,,
\end{equation*}
and a duality argument implies that 
\begin{equation*}
    \|\Sigma_\varepsilon\|_{L_{r/(r-1)}((0,T)\times\Omega)} \le c(T)\,,
\end{equation*}
from which~\eqref{il13} readily follows with $q=r/(r-1)$ due to $\Sigma_\varepsilon\ge v_{i,\varepsilon}\ge 0$ for $i\in \{1,2,3\}$.

We finally infer from~\eqref{il12}, \eqref{il13} and H\"older's inequality that
\begin{align*}
    & \int_0^T \int_\Omega \left| v_{1,\varepsilon} v_{2,\varepsilon}^2 - \varepsilon v_{2,\varepsilon}^3 \right|^{2q/(q+2)}\,\rd x\rd t \\
    & \qquad =  \int_0^T \int_\Omega \left| v_{2,\varepsilon} \big( v_{1,\varepsilon} - \varepsilon v_{2,\varepsilon} \big) \right|^{2q/(q+2)} |v_{2,\varepsilon}|^{2q/(q+2)}\,\rd x\rd t \\
    & \qquad \le \left( \int_0^T \int_\Omega \left| v_{2,\varepsilon} \big( v_{1,\varepsilon} - \varepsilon v_{2,\varepsilon} \big) \right|^2\,\rd x\rd t \right)^{q/(q+2)} \left( \int_0^T \int_\Omega |v_{2,\varepsilon}|^q\,\rd x\rd t \right)^{2/(q+2)} \\
    & \qquad \le c_3(T)\,,
\end{align*}
which is~\eqref{il14}.
\end{proof}

We are now in a position to perform the proof of \Cref{T3}.

\begin{proof}[Proof of \Cref{T3}]
Since $\min\{2,q\} > 2q/(q+2)>1$, we infer from~\eqref{il11} and~\eqref{il14} that the right-hand sides of~\eqref{v1e}, \eqref{v2e} and~\eqref{v3e} are bounded in $L_{2q/(q+2)}((0,T)\times\Omega)$ uniformly with respect to $\varepsilon\in (0,1)$. It then follows from the continuity and compactness properties of the heat semigroup in $L_{2q/(q+2)}(\Omega)$ and \cite{BHV1977} that $(v_{i,\varepsilon})_{\varepsilon\in (0,1)}$ is relatively compact in $C([0,T],L_{2q/(q+2)}(\Omega))$ for $i\in\{1,2,3\}$. Therefore, there are 
\begin{equation*}
    u_i\in C([0,T],L_{2q/(q+2)}(\Omega))\,, \quad 1\le i \le 3\,,
\end{equation*}
and a sequence $(\varepsilon_j)_{j\ge 1}$ in $(0,1)$ such that
\begin{equation}
    \lim_{j\to\infty} \varepsilon_j = 0\,, \quad \lim_{j\to\infty} \sup_{t\in [0,T]} \|(v_{i,\varepsilon_j}-u_i)(t)\|_{2q/(q+2)} = 0\,, \quad 1\le i \le 3\,. \label{il19}
\end{equation}
Upon extracting a further subsequence, we may also assume that
\begin{equation}
    \lim_{j\to\infty} v_{i,\varepsilon_j}(t,x) = u_i(t,x) \;\;\text{for a.e. }\; (t,x)\in (0,T)\times\Omega\,. \label{il20}
\end{equation}
An immediate consequence of~\eqref{il19} and~\eqref{il20} is that
\begin{equation*}
    \lim_{j\to\infty} \big( v_{1,\varepsilon_j} v_{2,\varepsilon_j}^2 - \varepsilon_j v_{3,\varepsilon_j}^3 \big)(t,x) = \big( u_1 u_2^2 \big)(t,x) \;\;\text{for a.e. }\; (t,x)\in (0,T)\times\Omega\,. 
\end{equation*}
Since $\big( v_{1,\varepsilon_j} v_{2,\varepsilon_j}^2 - \varepsilon_j v_{3,\varepsilon_j}^3 \big)_{j\ge 1}$ is weakly compact in $L_1((0,T)\times\Omega)$, Vitali's theorem, see \cite[Theorem~2.24]{FoLe2007} for instance, implies that 
\begin{equation*}
    \lim_{j\to\infty} \big\| v_{1,\varepsilon_j} v_{2,\varepsilon_j}^2 - \varepsilon_j v_{3,\varepsilon_j}^3 - u_1 u_2^2 \big\|_{L_1((0,T)\times\Omega)} = 0\,. 
\end{equation*}
Combining the above convergence with the bound~\eqref{il14}, we conclude that
\begin{equation}
    \lim_{j\to\infty} \big\| v_{1,\varepsilon_j} v_{2,\varepsilon_j}^2 - \varepsilon_j v_{3,\varepsilon_j}^3 - u_1 u_2^2 \big\|_{L_p((0,T)\times\Omega)} = 0\,, \qquad p\in \left[1,\frac{2q}{q+2} \right)\,. \label{il21}
\end{equation}
Another straightforward consequence of~\eqref{v3e} and~\eqref{il19} is that $u_3$ is the unique mild solution of~\eqref{heq3} in $L_{2q/(q+2)}(\Omega)$ on $(0,T)$.

We are left with identifying the limiting behavior of $(v_{4,\varepsilon})_{\varepsilon\in (0,1)}$ as $\varepsilon\to 0$. To this end, we deduce from~\eqref{v4e}, \eqref{il11} and Young's inequality that
\begin{align*}
    \frac{1}{2} \frac{\rd}{\rd t} \|v_{4,\varepsilon}-a\|_2^2 & = - \varepsilon \int_\Omega \nabla v_{4,\varepsilon}\cdot \nabla(v_{4,\varepsilon}-a)\,\rd x + \varepsilon \int_\Omega \big( k_1 v_{1,\varepsilon} - v_{4,\varepsilon} \big) (v_{4,\varepsilon}-a)\,\rd x \\
    & \le - \varepsilon \|\nabla v_{4,\varepsilon}\|_2^2 + \varepsilon \|\nabla v_{4,\varepsilon}\|_2 \|\nabla a\|_2 \\
    & \quad + \varepsilon \left( k_1 \| v_{1,\varepsilon}\|_2 + \|v_{4,\varepsilon}\|_2 \right) \left( \|v_{4,\varepsilon}\|_2 + \|a\|_2 \right) \\
    & \le \frac{\varepsilon}{2} \|\nabla a\|_2^2 + \varepsilon c(T) \le \varepsilon c(T)\,.
\end{align*}
Consequently, for $t\in [0,T]$,
\begin{equation*}
    \|v_{4,\varepsilon}(t)-a\|_2^2 \le \varepsilon c(T) 
\end{equation*}
and we have shown that
\begin{equation}
    \lim_{\varepsilon\to 0} \sup_{t\in [0,T]} \|v_{4,\varepsilon}(t)-a\|_2 = 0\,. \label{il22}
\end{equation}

Owing to~\eqref{il13}, \eqref{il19}, \eqref{il21} and~\eqref{il22}, we may let $j\to\infty$ in~\eqref{v1e} and~\eqref{v2e} with $\varepsilon=\varepsilon_j$ and deduce that $(u_1,u_2)$ is a mild solution to~\eqref{GS} in $L_p(\Omega,\mathbb{R}^2)$ on $(0,T)$ for any $p\in [1,2q/(q+2))$. Now, the comparison principle applied to~\eqref{gs1} ensures that
\begin{equation}
    \|u_1(t)\|_\infty \le M_1 := \max\left\{ \|u_1^0\|_\infty , \frac{\|a\|_\infty}{k_1} \right\}\,, \qquad t\in [0,T]\,. \label{il23}
\end{equation}
We next argue as in the proof of \cite[Theorem~1]{HMP1987} to improve the regularity of $u_2$. More precisely, for $\theta\in C_c^\infty((0,T)\times\Omega)$, the backward linear heat equation
\begin{equation}
\begin{split}
    \partial_t \phi & = - d_2 \Delta\phi + \phi - \theta \;\;\text{ in }\;\; (0,T)\times\Omega\,, \\
    \partial_\nu \phi & = 0 \;\;\text{ on }\;\; (0,T)\times\partial\Omega\,, \\
    \phi(T) & = 0 \;\;\text{ in }\;\; \Omega\,,
\end{split}\label{du01}
\end{equation}
has a unique classical solution $\phi$ that satisfies the following properties: for any $q\in (1,\infty)$, there is $C(q)>0$ depending only on $n$, $\Omega$, and $q$ such that
\begin{equation}
\begin{split}
    \|\phi\|_{L_q((0,T)\times\Omega)} + \|\Delta\phi\|_{L_q((0,T)\times\Omega)} & \le C(q) \|\theta\|_{L_q((0,T)\times\Omega)}\,, \\
    \|\phi(0)\|_q & \le C(q) T^{(q-1)/q} \|\theta\|_{L_q((0,T)\times\Omega)}\,,
    \end{split}\label{du00}
\end{equation}
see \cite[Lemmas~2--3]{HMP1987} and \cite{Lam1987}. Indeed, since $d_2 \Delta_N - 1$ generates an analytic semigroup of contractions in $L_2(\Omega)$, which is a contraction in $L_q(\Omega)$ for any $q\in [1,\infty]$, it follows from \cite[Th\'eor\`eme~1]{Lam1987} that, given $q\in (1,\infty)$, there is a positive constant $C_L(q)$ depending only on $n$, $\Omega$, and $d_2$ such that
\begin{equation}
    \|\partial_t \phi\|_{L_q((0,T)\times\Omega)} + \|\Delta\phi\|_{L_q((0,T)\times\Omega)} \le C_L(q) \|\theta\|_{L_q((0,T)\times\Omega)}\,. \label{du03}
\end{equation}
A first consequence of~\eqref{du01} and~\eqref{du03} is that
\begin{align*}
    \|\phi\|_{L_q((0,T)\times\Omega)} & \le \|\partial_t \phi\|_{L_q((0,T)\times\Omega)} + d_2 \|\Delta\phi\|_{L_q((0,T)\times\Omega)} + \|\theta\|_{L_q((0,T)\times\Omega)} \\
    & \le (2+d_2) C_L(q) \|\theta\|_{L_q((0,T)\times\Omega)}\,,
\end{align*}
which proves the first inequality stated in~\eqref{du00}. We next use again~\eqref{du03} and H\"older's inequality to deduce that
\begin{align*}
    \|\phi(0)\|_q & \le \int_0^T \|\partial_t\phi(t)\|_q\,\mathrm{d}t \le T^{(q-1)/q} \|\partial_t \phi\|_{L_q((0,T)\times\Omega)} \\
    & \le C_L(q) T^{(q-1)/q} \|\theta\|_{L_q((0,T)\times\Omega)}\,,
\end{align*}
and the proof of~\eqref{du00} is complete. 

We now infer from~\eqref{v1e}, \eqref{v2e}, \eqref{Vebc)} and~\eqref{du01} that
\begin{align*}
    - \int_\Omega \phi(0) \big( u_1^0 + u_2^0 \big)\,\mathrm{d}x & = \int_0^T \int_\Omega \partial_t\left(\phi\big( v_{1,\varepsilon} + v_{2,\varepsilon}\big) \right)\,\mathrm{d}x\mathrm{d}t \\
    & = \int_0^T \int_\Omega \phi \left[ d_1 \Delta v_{1,\varepsilon} - k_1 v_{1,\varepsilon} + v_{4,\varepsilon} + d_2 \Delta v_{2,\varepsilon} - v_{2,\varepsilon} + \varepsilon v_{3,\varepsilon} \right]\,\mathrm{d}x\mathrm{d}t \\
    & \quad + \int_0^T \int_\Omega \big(v_{1,\varepsilon} + v_{2,\varepsilon} \big) \left[ - d_2 \Delta\phi + \phi - \theta \right]\,\mathrm{d}x\mathrm{d}t \\
    & = (d_1-d_2) \int_0^T \int_\Omega v_{1,\varepsilon} \Delta\phi\,\mathrm{d}x\mathrm{d}t - \int_0^T \int_\Omega \theta \big( v_{1,\varepsilon} + v_{2,\varepsilon}\big)\,\mathrm{d}x\mathrm{d}t \\
    & \quad + \int_0^T \int_\Omega \phi \big[ (1-k_1) v_{1,\varepsilon} + v_{4,\varepsilon} + \varepsilon v_{3,\varepsilon} \big]\,\mathrm{d}x\mathrm{d}t\,. 
\end{align*}
Taking $\varepsilon=\varepsilon_j$ in the above identity and using the regularity of $\phi$ and $\theta$, we may take the limit $j\to\infty$ and infer from~\eqref{il19} and~\eqref{il22} that
\begin{align*}
   - \int_\Omega \phi(0) \big( u_1^0 + u_2^0 \big)\,\mathrm{d}x & = (d_1-d_2) \int_0^T \int_\Omega u_{1} \Delta\phi\,\mathrm{d}x\mathrm{d}t - \int_0^T \int_\Omega \theta \big( u_{1} + u_{2}\big)\,\mathrm{d}x\mathrm{d}t \\
    & \quad + \int_0^T \int_\Omega \phi \big[ (1-k_1) u_{1} + a \big]\,\mathrm{d}x\mathrm{d}t\,. 
\end{align*}
Therefore, owing to~\eqref{il23},
\begin{align*}
    \left| \int_0^T \int_\Omega \theta \big( u_{1} + u_{2}\big)\,\mathrm{d}x\mathrm{d}t \right| & \le \big\| u_1^0 + u_1^0\big\|_\infty \|\phi(0)\|_1 + |d_1-d_2| M_1 \|\Delta\phi\|_{L_1((0,T)\times\Omega)} \\
    & \quad + \left[ (1+k_1) M_1 + \|a\|_\infty \right] \|\phi\|_{L_1((0,T)\times\Omega)}\,. 
\end{align*}
It then readily follows from H\"older's inequality and~\eqref{du00} that, for $q\in (1,\infty)$, 
\begin{align*}
   \left| \int_0^T \int_\Omega \theta \big( u_{1} + u_{2}\big)\,\mathrm{d}x\mathrm{d}t \right| & \le c (T|\Omega|)^{(q-1)/q} \left( \|\phi(0)\|_q + \|\Delta\phi\|_{L_q((0,T)\times\Omega)} + \|\phi\|_{L_q((0,T)\times\Omega)} \right) \\
    & \le c(T) \|\theta\|_{L_q((0,T)\times\Omega)}\,,
\end{align*}
and a duality argument entails that
\begin{equation}
   \big\| u_{1} + u_{2}\big\|_{L_{q/(q-1)}((0,T)\times\Omega)} \le c(T)\,. \label{du02} 
\end{equation}
Recalling~\eqref{il23}, we conclude that $u_2\in L_{q/(q-1)}((0,T)\times\Omega)$ for any $q\in (1,\infty)$ and a classical bootstrap argument implies that $(u_1,u_2)$ is actually the unique classical solution to~\eqref{GS} on $(0,T)$, from which the convergence of the whole family $(\bv_\varepsilon)_{\varepsilon\in (0,1)}$ follows.  Also, as $T$ is arbitrary, a diagonal process guarantees the convergence on any time interval~$(0,T)$.
\end{proof}

\section*{Acknowledgements}
The work of PhL is partially funded by the Chinese Academy of Sciences President's International Fellowship Initiative Grant No.~2025PVA0101.
Part of this work was done while PhL enjoyed the hospitality of the Innovation Academy for Precision Measurement Science and Technology, Chinese Academy of Sciences, Wuhan.

\bibliographystyle{siam}
\bibliography{LitGrayScott}

\end{document}